   \def\MR#1{}
\long\def\@savemarbox#1#2{\global\setbox#1\vtop{\hsize\marginparwidth 
  \@parboxrestore\tiny\raggedright #2}}
\renewcommand*{\backref}[1]{}
\renewcommand*{\backrefalt}[4]{
  \ifcase #1
  [No citations.]
  \or [#2]
  \else [#2]
  \fi }
\numberwithin{equation}{section}
\theoremstyle{plain}
\newtheorem{theorem}[equation]{Theorem}
\newtheorem{corollary}[equation]{Corollary}
\newtheorem{lemma}[equation]{Lemma}
\newtheorem{conjecture}[equation]{Conjecture}
\newtheorem{proposition}[equation]{Proposition}
\newtheorem{test}[equation]{Test}
\newtheorem*{namedtheorem}{\theoremname}
\newcommand{\theoremname}{testing}
\newenvironment{named}[1]{\renewcommand{\theoremname}{#1}\begin{namedtheorem}}{\end{namedtheorem}}
\theoremstyle{definition}
\newtheorem{definition}[equation]{Definition}
\newtheorem{remark}[equation]{Remark}
\newtheorem{question}[equation]{Question}
\newcommand{\refthm}[1]{Theorem~\ref{Thm:#1}}
\newcommand{\reflem}[1]{Lemma~\ref{Lem:#1}}
\newcommand{\refprop}[1]{Proposition~\ref{Prop:#1}}
\newcommand{\refcor}[1]{Corollary~\ref{Cor:#1}}
\newcommand{\refrem}[1]{Remark~\ref{Rem:#1}}
\newcommand{\refconj}[1]{Conjecture~\ref{Conj:#1}}
\newcommand{\refeqn}[1]{\eqref{Eqn:#1}}
\newcommand{\refitm}[1]{\eqref{Itm:#1}}
\newcommand{\refdef}[1]{Definition~\ref{Def:#1}}
\newcommand{\refsec}[1]{Section~\ref{Sec:#1}}
\newcommand{\reffig}[1]{Figure~\ref{Fig:#1}}
\newcommand{\reftest}[1]{Test~\ref{Test:#1}}
\newcommand{\refques}[1]{Question~\ref{Ques:#1}}
\newcommand{\HH}{{\mathbb{H}}}
\newcommand{\RR}{{\mathbb{R}}}
\newcommand{\ZZ}{{\mathbb{Z}}}
\newcommand{\NN}{{\mathbb{N}}}
\newcommand{\CC}{{\mathbb{C}}}
\newcommand{\QQ}{{\mathbb{Q}}}
\newcommand{\calC}{\mathcal{C}}
\newcommand{\calD}{\mathcal{D}}
\newcommand{\calE}{\mathcal{E}}
\newcommand{\calH}{\mathcal{H}}
\newcommand{\calO}{\mathcal{O}}
\newcommand{\calP}{\mathcal{P}}
\newcommand{\calS}{\mathcal{S}}
\newcommand{\calT}{\mathcal{T}}
\newcommand{\calU}{\mathcal{U}}
\newcommand{\calV}{\mathcal{V}}
\newcommand{\from}{\colon} 
\newcommand{\bdy}{\partial}
\newcommand{\quotient}[2]{{\raisebox{0.2em}{$#1$}
           \!\!\left/\raisebox{-0.2em}{\!$#2$}\right.}}
\newcommand{\vol}{\operatorname{vol}}
\newcommand{\area}{\operatorname{area}}
\newcommand{\len}{\operatorname{len}}
\newcommand{\systole}{{\operatorname{sys}}}
\newcommand{\sysmin}{{\operatorname{sysmin}}}
\newcommand{\lamhom}{{\lambda_{\rm hom}}}
\newcommand{\muhom}{{\mu_{\rm hom}}}
\newcommand{\haze}{{\operatorname{haze}}}
\newcommand{\ERROR}{{10^{-5}}}
\renewcommand{\ss}{{\mathbf{s}}}
\newcommand{\zhat}{{\hat z}}
\newcommand{\flin}{{ f_{\textup{lin}} }}
\newcommand{\fpl}{{ f_{\textup{pl}} }}
\newcommand{\lnew}{{\ell_{\textup{new}}}}
\DeclareMathOperator{\spn}{span}
\title[Excluding cosmetic surgeries]{Excluding cosmetic surgeries on hyperbolic 3-manifolds}
\author[D.~Futer]{David Futer}
\address[]{Department of Mathematics, Temple University,
Philadelphia, PA 19122, USA}
\email[]{dfuter@temple.edu}
\author[J.~Purcell]{Jessica S.~Purcell}
\address[]{School of Mathematics, Monash University, Clayton, VIC 3800, Australia }
\email[]{jessica.purcell@monash.edu}
\author[S.~Schleimer]{Saul Schleimer}
\address[]{Department of Mathematics, 
University of Warwick, Coventry CV4 7AL, UK}
\email[]{s.schleimer@warwick.ac.uk}
\subjclass[2020]{57K32, 57K10, 57-04, 57-08}
\thanks{\today}
\begin{document}

\begin{abstract}
This paper employs knot invariants and results from hyperbolic geometry to develop a practical procedure for checking the cosmetic surgery conjecture on any given one-cusped manifold. This procedure has been used to establish the following computational results. First, we verify that all knots up to 19 crossings, and all one-cusped 3-manifolds in the SnapPy census, do not admit any purely cosmetic surgeries. Second, we check that a hyperbolic knot with at most 15 crossings only admits chirally cosmetic surgeries when the knot itself is amphicheiral. Third, we enumerate all knots up to 13 crossings that share a common Dehn filling with the figure--$8$ knot. The code that verifies these results is publicly available on GitHub.
\end{abstract}

\maketitle

\section{Introduction}

This paper is concerned with the question of whether two distinct Dehn fillings of a given 3-manifold can ever produce the same result.
Consider a compact orientable 3-manifold $N$ whose boundary is a single torus, and a \emph{slope} $s$, or isotopy class of simple closed curves on $\bdy N$. The \emph{Dehn filling of $N$ along $s$}, denoted $N(s)$, is the 3-manifold obtained by attaching a solid torus $D^2\times S^1$ to $N$ by a homeomorphism of boundaries that sends a meridian curve $\bdy D^2 \times \{ * \}$ to $s \subset \bdy N$. We use the terms \emph{Dehn filling} and \emph{Dehn surgery} interchangeably.

\begin{definition}\label{Def:Cosmetic}
Let $N$ be a compact oriented 3-manifold whose boundary is a single torus. Let $s,s'$ be distinct slopes on $\bdy N$. We call $(s, s')$ 
a \emph{cosmetic surgery pair} if  there is a homeomorphism $\varphi \from N(s) \to N(s')$. 
The pair is called    \emph{chirally cosmetic} if $\varphi$ is orientation-reversing, and 
\emph{purely cosmetic} if $\varphi$ is orientation-preserving.
\end{definition}

There are many examples of chirally cosmetic surgeries where $N$ is Seifert fibered. See Bleiler--Hodgson--Weeks \cite{BleilerHodgsonWeeks} for a survey, and Ni--Wu \cite{NiWu:Cosmetic} for more examples.  Ichihara and Jong have constructed one example of a chirally cosmetic surgery pair where the parent manifold $N$ and the filled manifolds $N(s), N(s')$ are all hyperbolic \cite{IchiharaJong:CosmeticBanding}.
By contrast, no purely cosmetic surgeries are known apart from the case where $N$ is a solid torus. This has led Gordon \cite{Gordon:ICM} to propose

\begin{conjecture}[Cosmetic surgery conjecture]\label{Conj:Cosmetic}
Let $N$ be a compact, oriented 3-manifold such that $\bdy N$ is an incompressible torus. 
Then $N$ has no purely cosmetic surgeries.
\end{conjecture}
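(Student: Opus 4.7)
The plan is to stratify by the geometric type of $N$ supplied by Geometrization and the JSJ decomposition, and to treat each stratum with different tools. If $N$ is Seifert fibered with incompressible torus boundary, the fiber structure together with the classification of Dehn fillings that yield Seifert or small Seifert manifolds constrains $N(s)$ enough that a purely cosmetic pair forces $s=s'$; much of this reduction is already present in the literature surveyed by Bleiler--Hodgson--Weeks. If $N$ is toroidal but not Seifert fibered, I would exploit the JSJ decomposition: any homeomorphism $\varphi\from N(s)\to N(s')$ must, up to isotopy, preserve the collection of JSJ tori, so it either fixes each JSJ piece setwise (which pins $s$ and $s'$ through the filling instructions on the boundary piece) or produces a purely cosmetic pair on a strictly simpler hyperbolic or Seifert piece, to which the same argument applies inductively.

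The main case is $N$ hyperbolic. Here I would assemble three independent families of obstructions and then argue that they collectively exclude every candidate pair $(s,s')$. First, Heegaard Floer invariants: the Ozsv\'ath--Szab\'o $d$-invariants, the Casson--Walker invariant (Boyer--Lines), and the recent bounds of Ni--Wu and Hanselman force any purely cosmetic pair on a hyperbolic knot complement into the form $s = p/q$, $s' = p/q'$ with $q+q' \equiv 0 \smod p$, and bound both $|q|$ and $|p|$ in terms of knot Floer homology. Second, hyperbolic geometry: by the Neumann--Zagier expansion, $\vol(N(s))$ is a strictly decreasing function of slope length once the length is large, and the Hodgson--Kerckhoff filling inequalities refine this into a quantitative two-variable constraint coupling the lengths of $s$ and $s'$; monotone geometric invariants such as systole provide further coupling. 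Third, combinatorics of slopes on the cusp torus: in any bounded length range, only finitely many slope pairs exist, so a finite enumeration closes the short-slope regime.

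The hard part -- and the reason \refconj{Cosmetic} remains open in full generality -- is that no known obstruction is uniform across all slope lengths for arbitrary $N$. The Heegaard Floer bounds have a finite range of validity depending on the knot Floer homology of $N$, while the hyperbolic asymptotics only become effective beyond a threshold that itself depends on the geometry of $N$. For any fixed $N$ these two regimes can be made to overlap and the finitely many surviving candidate pairs can be eliminated directly; this is precisely the strategy that the present paper makes algorithmic. A fully general proof appears to require either a new monotone invariant separating $N(p/q)$ from $N(p/q')$ whenever $q \neq q'$, or sharper universal filling estimates that close the middle range independently of $N$. Absent such a tool, I expect the proof plan to reduce to the case-by-case framework developed here rather than to a single uniform argument.
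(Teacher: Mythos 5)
The statement you were given is \refconj{Cosmetic}, which --- as you correctly observe at the end of your proposal --- is an open conjecture, not a theorem. The paper does not prove it; it develops an algorithmic procedure to verify it on finite families of manifolds and reports computational results (all knots up to 19 crossings, the one-cusped SnapPy census). There is therefore no paper proof to compare against, and any ``proof attempt'' claiming success would necessarily be wrong. Your proposal is honest about this, and your final paragraph is the right diagnosis of why the conjecture remains open: the Heegaard--Floer--type obstructions control short slopes, the effective Hodgson--Kerckhoff bounds control long slopes, and for any fixed $N$ the two regimes overlap to leave a finite check, but neither regime is uniform in $N$. This is precisely the architecture of the paper's procedure: Tests~\ref{Test:Casson}--\ref{Test:HanselmanFull} supply the knot-invariant obstructions, Theorems~\ref{Thm:CosmeticImproved} and~\ref{Thm:FinitenessOfPairs} supply the effective hyperbolic bounds, and Section~\ref{Sec:CosmeticProcedure} closes the finite middle regime by direct comparison of the candidate fillings.

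Two small corrections to the landscape you sketch. First, the Seifert-fibered case is not in fact settled: torus knots are handled by Ni--Wu, but the conjecture for a general Seifert-fibered $N$ with incompressible torus boundary is not in the surveyed literature, and the paper restricts to hyperbolic $N$ for exactly this reason; your first stratum therefore does not close as cleanly as you suggest. Second, the slope constraint from Ni--Wu and Hanselman is sharper than what you wrote: a purely cosmetic pair on a knot in $S^3$ must have the form $\{s,-s\}$ (so $q' = -q$, not merely $q + q' \equiv 0 \pmod p$), and Hanselman's \refthm{Hanselman} further forces either $\{s,s'\} = \{\pm 2\}$ with $g(K)=2$, or $\{s,s'\} = \{\pm 1/q\}$ with $q$ bounded by a ratio of thickness and genus. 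That extra rigidity is what makes the finite enumeration practical at scale.
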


This problem appears as \cite[Conjecture 6.1]{Gordon:ICM}, as well as \cite[Problem 1.9(a)]{K3}.
See also \refconj{CosmeticMultiCusp} for a version with multiple boundary tori, and \refconj{ChiralKnotCosmetic} for a version that seeks to exclude chirally cosmetic surgeries on knots in $S^3$. This paper provides extensive experimental evidence for all three conjectures.

\subsection{Purely cosmetic surgeries on knots in $S^3$}

\refconj{Cosmetic} has received the most attention in the setting where $N$ is the exterior of a knot $K \subset S^3$. In this setting, certain knot invariants can severely limit the slopes $s,s'$ involved in a purely cosmetic surgery pair.  Invariants with this property include the Alexander polynomial (thanks to the work of Boyer and Lines~\cite{BoyerLines}),  knot Floer homology (thanks to the work of Ni--Wu~\cite{NiWu:Cosmetic} and Hanselman \cite{Hanselman:Cosmetic}), and the Jones polynomial (thanks to the work of Ichihara--Wu~\cite{IchiharaWu} and Detcherry~\cite{Detcherry:Cosmetic}). We refer to \refsec{KnotInvariants} for a detailed survey of these results, formulated as algorithmic tests that can be performed on a knot $K$.

Sometimes, these knot invariants can 
obstruct cosmetic surgeries on $N = S^3 - K$ altogether.
Notably, obstructions coming from Heegaard Floer homology, combined with other invariants, have been used to prove \refconj{Cosmetic} for several infinite families of knots. These families include knots of genus $1$ (Wang~\cite{Wang:GenusOne}), torus knots (Ni and Wu~\cite{NiWu:Cosmetic}), two-bridge knots (Ichihara, Jong, Mattman, and Saito~\cite{Ichihara-Jong-Mattman-Saito}), pretzel knots (Stipsicz and Szabo~\cite{Stipsicz-Szabo:Cosmetic}), alternating knots with at least 7 twist regions (Ichihara and Jong~\cite{IchiharaJong:AlternatingCosmetic}), connected sums (Tao~\cite{Tao:ConnectedSumCosmetic}), and nontrivial cables (Tao \cite{Tao:CablesCosmetic}). 

After this paper was first distributed, Daemi, Lidman, and Miller Eismeier used instanton Floer homology to prove that the Alexander polynomial obstructs cosmetic surgeries \cite{DLME:Cosmetic}; see \refthm{DLME} for details. It follows as a consequence that \refconj{Cosmetic} holds for all alternating knots, all fibered knots, and all knots of genus $g \neq 2$.

Knot invariants have also been used to verify \refconj{Cosmetic} on large data sets. The method is to first apply a test (such as one of the tests described in \refsec{KnotInvariants}) to reduce the search to a small subset of knots, and to a finite and explicitly given set of slopes on these knots. Then, direct computation (often using hyperbolic invariants) can distinguish the Dehn fillings along the remaining slopes.
In this vein, Hanselman used knot Floer homology (see Tests~\ref{Test:Tau}--\ref{Test:HanselmanFull}) to verify \refconj{Cosmetic} for all knots up to 16 crossings \cite{Hanselman:Cosmetic}. Detcherry used Jones polynomials (see Tests~\ref{Test:JonesDeriv}--\ref{Test:Detcherry}), with an assist from knot Floer homology, to  verify \refconj{Cosmetic} for all knots up to 17 crossings \cite{Detcherry:Cosmetic}.

In this paper, we extend the verification of \refconj{Cosmetic} to 19 crossings:

\begin{named}{\refthm{19CrossingKnots}}
The cosmetic surgery \refconj{Cosmetic} holds for all nontrivial knots up to 19 crossings.
\end{named}

The method of proof is still to compute obstructions coming from knot invariants, reducing the sample dramatically. One innovation, compared to prior work, is that direct computation of knot Floer homology is never needed 
for the knots in the set.  Instead, we rely heavily on the Alexander polynomial and the Turaev genus (see \reftest{GenusThickQuick}), to capture most of the obstructive information contained in knot Floer homology.
This speeds up computations dramatically, making it feasible to check the 352,152,252 prime knots up to 19 crossings.

A second innovation is that our work is encapsulated in user-friendly code that can be used to check \refconj{Cosmetic} for any knot in $S^3$, or for any hyperbolic manifold with a single cusp. See \refsec{CodeGuide} for more details.

\subsection{Cosmetic surgery conjecture for general 3-manifolds}

Moving beyond knots in $S^3$, let $\overline N$ be a compact 3-manifold with boundary consisting of tori, and let $N$ be its interior. Since most of our tools come from hyperbolic geometry, we restrict to the setting where $N$ is hyperbolic. In this setting, \refconj{Cosmetic} has the following analogue.

\begin{conjecture}[Hyperbolic cosmetic surgery conjecture]\label{Conj:CosmeticMultiCusp}
Let $N$ be a finite-volume hyperbolic 3-manifold with one or more cusps. 
Let $\ss$ and $\ss'$ be tuples of slopes on the cusps of $N$. If there is an orientation-preserving homeomorphism $\varphi \from N(\ss) \to N(\ss')$, and this manifold is hyperbolic, then $\varphi$ restricts (after an isotopy) to a homeomorphism $N \to N$ sending $\ss$ to $\ss'$.
\end{conjecture}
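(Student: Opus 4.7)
The plan is to combine Mostow--Prasad rigidity with Thurston's hyperbolic Dehn surgery theorem, reducing the conjecture to a geometric matching problem about the cores of the filling solid tori. First, since $N(\ss)$ and $N(\ss')$ are finite-volume hyperbolic 3-manifolds and $\varphi$ is orientation-preserving, Mostow--Prasad rigidity promotes $\varphi$ to an orientation-preserving isometry in its homotopy class. It therefore suffices to show that every orientation-preserving isometry $N(\ss) \to N(\ss')$ arises, up to isotopy, from a self-homeomorphism of $N$ carrying $\ss$ to $\ss'$.

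Second, I would exploit the fact that the cores of the Dehn-filling solid tori sit inside $N(\ss)$ as an ordered tuple of closed geodesics, and similarly in $N(\ss')$, where the ordering records which cusp of $N$ each core descends from. If the isometry $\varphi$ sends the cores of $N(\ss)$ bijectively to those of $N(\ss')$ respecting this labeling, then drilling out these geodesics produces an induced isometry $N \to N$ that by construction sends $\ss$ to $\ss'$. So the entire task reduces to proving that any orientation-preserving isometry must preserve this ``core configuration.''

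Third, I would control the core configuration via Thurston's theorem: the tube radii around the filling cores tend to infinity as the normalized lengths of the slopes grow, and once these radii are sufficiently large the cores are uniquely characterized as the shortest geodesics of the filled manifold. In this ``deep filling'' regime any isometry must permute the cores, and the argument concludes. The main obstacle is the complementary regime, where some slope has small normalized length and a filling core need not be a shortest geodesic; here in principle an isometry could swap a core with some other short geodesic. Resolving this case in full generality appears to require detailed understanding of the isometry group of the filled manifold, and indeed this is precisely why the conjecture remains open. A practical approach, aligned with the rest of the paper, is to enumerate the finitely many exceptional slopes with short normalized length for a given $N$ and dispatch them by direct hyperbolic computation, which is the strategy the subsequent sections will formalize.
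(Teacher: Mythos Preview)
The statement you are asked to prove is a \emph{conjecture}, not a theorem: the paper does not contain a proof of it, and indeed explicitly treats it as open. So there is no ``paper's own proof'' to compare against. Your proposal is not a proof either, and to your credit you say so yourself: the argument via Mostow rigidity and identifying the filling cores as the unique shortest geodesics works only in the ``deep filling'' regime, and you correctly note that the short-slope case is the obstruction.

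What you have written is essentially the heuristic behind the paper's \emph{partial} result, \refthm{CosmeticImproved} (via \refthm{UniqueShortest}): when $L(\ss)$ and $L(\ss')$ exceed an explicit bound depending on $\systole(N)$, the filling cores are the only geodesics of their length, so any isometry must match them and the conjecture follows. The paper then handles the remaining finitely many short slopes on a given one-cusped $N$ by direct computation, exactly as you suggest in your last paragraph. So your outline is accurate as a description of the paper's strategy for verifying the conjecture on specific manifolds, but it is not, and does not claim to be, a proof of the conjecture in general. You should not present this as a proof of \refconj{CosmeticMultiCusp}; rather, it is a sketch of why the conjecture holds for long fillings and a program for attacking the rest case by case.
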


Compare Kirby \cite[Problem 1.81(B)]{Kirby:Problems} for a closely related statement, which is too strong to be true.
Indeed, restricting to purely cosmetic surgeries is necessary, even when $N$ has a single cusp. For instance, Dunfield  (personal communication) has observed that in the census manifold $N = \texttt{o9\_39009}$, the slopes $(-1, 3)$ and $(-3, 2)$ are chirally cosmetic, with the further property that the core curves of Dehn filling solid tori are isotopic to geodesics. Ichihara and Jong  \cite{IchiharaJong:CosmeticBanding} have also constructed a one-cusped hyperbolic manifold admitting chirally cosmetic surgeries. 

For a one-cusped hyperbolic manifold, Conjectures~\ref{Conj:Cosmetic} and~\ref{Conj:CosmeticMultiCusp} are nearly equivalent. In one direction, observe that the conclusion of  \refconj{Cosmetic} is stronger, hence \refconj{Cosmetic} implies \refconj{CosmeticMultiCusp}. In the other direction,
when $N$ is hyperbolic, we recall that every one-cusped manifold $N$  has a \emph{homological longitude}: a unique slope $\lamhom$ on the cusp that is trivial in $H_1(N, \QQ)$. See \refdef{HomologicalLongitude} for more details. Thus any orientation-preserving homeomorphism $\varphi \from N \to N$ must send $\lamhom$ to itself, hence must act trivially on the set of slopes on the cusp. Consequently,  \refconj{CosmeticMultiCusp} implies \refconj{Cosmetic} for all but the handful (no more than 10) of exceptional surgeries on $N$.

In this paper, we verify Conjectures~\ref{Conj:Cosmetic} and~\ref{Conj:CosmeticMultiCusp} for all one-cusped manifolds in the SnapPy census. 

\begin{named}{\refthm{CosmeticCensus}}
Conjectures~\ref{Conj:Cosmetic} and~\ref{Conj:CosmeticMultiCusp} hold for the 59,107 one-cusped manifolds in the SnapPy census.
\end{named}

The proof of \refthm{CosmeticCensus} makes heavy use of hyperbolic geometry. For each manifold $N$, we wish to restrict attention to a small, explicit list of slopes.
Thus we begin by proving \refconj{CosmeticMultiCusp} for all sufficiently long (tuples of) slopes, where ``long'' is explicitly quantified. For these long slopes, the only purely or chirally cosmetic surgeries come from symmetries of $N$ itself.
An ineffective version of this sort of statement appears in Bleiler--Hodgson--Weeks \cite{BleilerHodgsonWeeks}, and an effective version (with an explicit quantification) appears in our previous paper \cite[Theorem~7.29]{FPS:EffectiveBilipschitz}. 

Our effective statement requires the definition of \emph{normalized length}. 

\begin{definition}\label{Def:NormalizedLength}
Let $s$ be a slope on a horospherical  torus $T \subset N$. The \emph{normalized length} of $s$ is $L(s) = \len(s)/\sqrt{\area(T)}$. Observe that $L(s)$ is unchanged when the metric on $T$ is scaled, hence does not depend on the choice of cusp cross-section.

When $N$ has multiple cusps, let $\ss = (s_1, \ldots, s_n)$ be a tuple of slopes, with one slope per cusp. The 
\emph{normalized length} of $\ss$ is the quantity $L(\ss)$ satisfying $1/L(\ss)^2 = \sum_{i=1}^n 1/L(s_i)^2$. 
\end{definition}

We prove the following.

\begin{named}{\refthm{CosmeticImproved}}
Let $N$ be a cusped hyperbolic 3-manifold. Suppose that $\ss, \ss'$ are distinct tuples of slopes on the cusps of $N$, whose normalized lengths satisfy
\begin{equation}\label{Eqn:ImprovedHypoth}
L(\ss), L(\ss') \geq \max \left \{ 9.97, \, \sqrt{ \frac{2\pi}{\systole(N)}  + 56 } \right \}.
\end{equation}
Then any homeomorphism $\varphi \from N(\ss) \to N(\ss')$ restricts (after an isotopy) to a self-homeomorphism  of $N$ sending $\ss$ to $\ss'$. 
In particular, if $\systole(N) \geq 0.145$, then the above conclusion holds for all pairs $(\ss, \ss')$ of normalized length at least $9.97$.
\end{named}

\refthm{CosmeticImproved} is a slight strengthening of~\cite[Theorem 7.29]{FPS:EffectiveBilipschitz}. Compared to that statement, both the constant term and the variable term in the maximum of \refeqn{ImprovedHypoth} have become slightly smaller. This makes \refthm{CosmeticImproved} easier to apply in practice.

In \refsec{CosmeticProcedure}, we explain how \refthm{CosmeticImproved} leads to a practical procedure to check the Cosmetic surgery conjecture on one-cusped manifolds. Here is an outline. When $N$ has one cusp, there are only finitely many slopes $s$ whose normalized length is shorter than the bound in \refeqn{ImprovedHypoth}. Suppose for concreteness that $\systole(N) \geq 0.145$, hence the length bound becomes $9.97$. By a lemma of Agol~\cite[Lemma 8.2]{agol:6theorem}, there are at most $102$ slopes on the cusp of $N$ of normalized length at most $10$, hence $s$ must be one of these 102 slopes. If $N(s) \cong N(s')$ and this manifold is hyperbolic, then volume considerations imply that $L(s')$ is also bounded above; see \refthm{FinitenessOfPairs}. This leads to an explicit finite list of slope pairs $(s,s')$ to check, and every pair of filled manifolds $N(s), N(s')$ can be compared by computer. The finite list of pairs of exceptional surgeries on $N$ can also be compared by computer.

We have implemented this procedure in Sage, and the program is publicly available on GitHub \cite{FPS:GitHubCosmetic}. For any given one-cusped manifold $N$, the program forms the finite set of pairs of slopes that need to be checked, and then proceeds to compare them. \refsec{CosmeticProcedure} describes the procedure in detail. All of the comparisons between manifolds are done using rigorously verified hyperbolic invariants, or else using algebraic invariants such as the homology of covers. This includes the rigorous verification of $\systole(N)$, which is included in SnapPy versions 3.2 and following. The forthcoming work of Goerner, Haraway, Hoffman, and Trnkova \cite{GHHT:LengthSpectra} includes an explanation of the 
 verified length spectrum algorithm and a justification of the error bounds that are used in SnapPy 3.3.

As with all computer-assisted proofs, the correctness relies upon the rigor of our mathematical arguments (described in \refsec{Tools}), of our code (described in \refsec{CosmeticProcedure}), and of the software packages that we use (Sage, SnapPy, and Regina). 

\subsection{Chirally cosmetic surgeries}
There are no known counterexamples to \refconj{Cosmetic}, that is, no known examples of purely cosmetic surgeries on a 3-manifold $N$ with incompressible boundary
However, as stated above, there exist known examples of chirally cosmetic surgeries.
Recently Ichihara, Ito, and Saito proposed the following conjecture~\cite[Conjecture 1]{IIS:ChiralCasson}, which states that chirally cosmetic surgeries on knots in the 3-sphere only occur in very restricted ways. 

\begin{conjecture}[Chirally cosmetic surgery conjecture for knots in $S^3$]\label{Conj:ChiralKnotCosmetic}
Let $K$ be a knot in $S^3$ that is not a $(2,r)$ torus knot, and let $N = S^3 - K$. 
If $(s, s')$ is a chirally cosmetic pair of slopes on $\bdy N$, then $K$ has an orientation-reversing symmetry and $s' = - s$.
\end{conjecture}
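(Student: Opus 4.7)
The plan is to verify the conjecture computationally on a large class of knots (for example, all hyperbolic knots up to $15$ crossings, as advertised in the abstract), following the same strategic template established earlier in the paper for \refconj{Cosmetic}: first reduce to a finite, explicit list of candidate slope pairs per knot, then cut the list down with invariant-based obstructions, and finally distinguish the residual pairs by rigorous hyperbolic computation.

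First, I would invoke \refthm{CosmeticImproved} to restrict attention to slope pairs $(s, s')$ of small normalized length. Outside that short-slope regime, any cosmetic pair must be induced by a self-homeomorphism of $N = S^3 - K$. Because the homological longitude $\lamhom$ is a topological invariant of $N$, every self-homeomorphism preserves $\lamhom$ and acts by $\pm 1$ on the meridian. An orientation-reversing self-homeomorphism is therefore equivalent to an orientation-reversing symmetry of $K$ and sends any slope $s$ to $-s$. Thus in the long-slope regime the conjecture holds automatically; only the finitely many short-slope pairs (enumerated using Agol's bound on the number of slopes of bounded normalized length, together with \refthm{FinitenessOfPairs}) need further analysis.

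Second, for the finite list of short-slope pairs I would apply obstructions from knot invariants, suitably adapted to the chirally cosmetic setting. The Alexander polynomial is unchanged under mirroring, so the Boyer--Lines style constraints still apply verbatim; the Jones polynomial satisfies $V_{\bar K}(q) = V_K(q^{-1})$, so the Ichihara--Wu and Detcherry tests adapt after a variable substitution; and the Ichihara--Ito--Saito Casson-invariant obstruction is especially well suited, since the Casson invariant changes sign under mirroring and therefore provides a sensitive chirality detector. Non-amphicheiral candidates should be eliminated in bulk by these tests, and for amphicheiral candidates one keeps only those pairs of the form $(s, -s)$ that are expected by the conjecture.

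Third, for any surviving pairs I would compare $N(s)$ and $N(s')$ using rigorously verified hyperbolic invariants in SnapPy: hyperbolic volume, Chern--Simons invariant (used as a chirality detector), short geodesic spectrum, and, when needed, homology of low-degree covers. When the invariants agree, I would compute the full isometry group of $N$ and check whether an orientation-reversing symmetry of $N$ pairs $s$ with $s'$; if so, the pair is accounted for by the conjecture, and if not, the pair is flagged as a potential counterexample requiring hand inspection. The main obstacle is reliably certifying whether a purported homeomorphism $N(s) \to N(s')$ is orientation-preserving or orientation-reversing: Chern--Simons catches most chirality, but near-amphicheiral configurations require carefully combining isometry-signature data with invariants of the symmetry group of $N$, and the non-rigor of SnapPy's systole computation (already a caveat in \refthm{CosmeticCensus}) propagates into the short-slope enumeration used in Step~1.
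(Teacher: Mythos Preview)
The statement you were given is a \emph{conjecture}; the paper does not prove it in general, only verifies it for hyperbolic knots up to 15 crossings (\refthm{ChiralKnots}). So the relevant comparison is with the procedure of \refsec{ProcedureDescription} and the short proof of \refthm{ChiralKnots}. Your first and third steps match the paper: use \refthm{CosmeticImproved} and \refthm{FinitenessOfPairs} to reduce to finitely many slope pairs, then distinguish fillings via verified volume, Chern--Simons, and cover homology.

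Your second step, however, is where the proposal goes wrong. The obstructions of \refsec{KnotInvariants} (Boyer--Lines, Hanselman, Ichihara--Wu, Detcherry) are theorems about \emph{purely} cosmetic surgeries, and the paper makes no attempt to adapt them to the chirally cosmetic case. Your assertion that ``the Boyer--Lines style constraints still apply verbatim'' is false: their argument uses that the Casson invariant is preserved under orientation-preserving homeomorphism, which fails in the orientation-reversing case (indeed it changes sign, so one gets a different conclusion, not the same one). Likewise, Hanselman's \refthm{Hanselman} and Detcherry's TQFT bound have no stated chirally cosmetic analogues. The paper explicitly abandons all of \refsec{KnotInvariants} here and relies solely on the hyperbolic and homological procedure of \refsec{Tools}; see the remark immediately following \refthm{19CrossingKnots}, which notes that this hyperbolic-only computation for knots up to 15 crossings was precisely the computation yielding \refthm{ChiralKnots}.

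One further point of divergence: the paper handles amphicheiral knots by discarding them up front, not by filtering their slope pairs. For amphicheiral $K$, composing any chirally cosmetic homeomorphism with the orientation-reversing self-symmetry of $N$ turns a chirally cosmetic pair $(s,s')$ into a purely cosmetic pair $(-s,s')$, so \refconj{ChiralKnotCosmetic} for such $K$ reduces to \refconj{Cosmetic}, already verified in \refthm{19CrossingKnots}. For the remaining chiral knots the conjecture predicts \emph{no} chirally cosmetic pairs whatsoever, and the procedure simply distinguishes every candidate pair.
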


See~\cite[Conjecture 1]{IIS:ChiralCasson}  and \cite[Propblem 1.9(d)]{K3} for a slightly stronger formulation: chirally cosmetic surgeries on knots in $S^3$ either satisfy \refconj{ChiralKnotCosmetic} or are particular pairs of slopes on $(2,r)$ torus knots. We restrict attention to non-torus knots because our methods are primarily hyperbolic. 

The statements of \refconj{ChiralKnotCosmetic} and of~\cite[Conjecture 1]{IIS:ChiralCasson} are carefully designed to get around known counterexamples. For instance, $(2,r)$ torus knots are known to admit chirally cosmetic surgeries;
see \cite[Corollary A.2]{IIS:ChirallyCosmetic}.
There are also known examples of one-cusped hyperbolic manifolds that do not embed in $S^3$ and admit chirally cosmetic surgeries, such as $N = \texttt{o9\_39009}$ and the example of Ichihara and Jong~\cite{IchiharaJong:CosmeticBanding}. 

\refconj{ChiralKnotCosmetic} is known to hold for several families of knots. These include genus one alternating knots \cite{IIS:ChiralCasson}, alternating odd pretzels \cite{Varvarezos:PretzelJKTR, Varvarezos:PretzelHFK}, positive two-bridge knots \cite{Ito:TwoBridge}, and special alternating knots with sufficiently many twist regions \cite{Ito:LargeTwist}. All of these families are alternating.

As with purely cosmetic surgeries, \refthm{CosmeticImproved} provides a useful practical tool for 
ruling out chirally cosmetic surgeries on a 
hyperbolic knot complement $N = S^3 - K$. Provided that $K$ is chiral (meaning, it does \emph{not} admit any orientation-reversing symmetries), \refthm{CosmeticImproved} constrains the slopes involved in chirally cosmetic surgeries to an explicit finite list. The procedure for checking this finite list, described in \refsec{ProcedureDescription}, is nearly identical to the one for purely cosmetic surgeries. Using this procedure, we have verified \refconj{ChiralKnotCosmetic} for hyperbolic knots up to 15 crossings.

\begin{named}{\refthm{ChiralKnots}}
  \refconj{ChiralKnotCosmetic}, the chirally cosmetic surgery conjecture for knots in $S^3$, holds for the 313,209 hyperbolic knots up to 15 crossings.
  \end{named}

In prior work, Ichihara, Ito, and Saito have verified \refconj{ChiralKnotCosmetic} for approximately 75\% of the 245 non-$(2,r)$-torus knots up to 10 crossings~\cite[Table 2]{IIS:ChiralCasson}. \refthm{ChiralKnots} expands this verification to a much larger data set.

\subsection{Common fillings of distinct cusped manifolds}
The same theoretical results and computational tools that can be used to detect common Dehn fillings of the same 3-manifold, as would occur in any counterexample to the cosmetic surgery conjecture,  can also be used to detect common Dehn fillings of distinct 3-manifolds. 

In \refsec{CommonFillings}, we walk through the adaptations needed to consider distinct parent 3-manifolds. We apply this in two results. In \refthm{CommonFillingLowCrossing}, we find all knots with at most 13 crossings that share a common nontrivial Dehn surgery with the figure-8 knot. In  \refthm{CommonFillingCensus}, we find all knot complements that can be triangulated with fewer than 10 tetrahedra that share a common nontrivial Dehn filling with the figure-8 knot complement. These results have been used by Kalfagianni and Melby~\cite{KalfagianniMelby} to construct many new $q$-hyperbolic knots in $S^3$. 

\subsection{Code on GitHub}\label{Sec:CodeGuide}
All of the code that was used to prove the computational theorems in this paper is publicly available on GitHub~\cite{FPS:GitHubCosmetic}.
We have endeavored to make the code user-friendly, extensively documented, and widely applicable.
Code that uses knot invariants is designed for arbitrary knots, and code for hyperbolic manifolds is designed for arbitrary one-cusped manifolds. Consequently, our programs should be able to handle examples outside the given data sets.

At the same time, we have not implemented a complete solution to the homeomorphism problem for 3-manifolds. Accordingly, our programs are not guaranteed to distinguish every non-homeomorphic pair of Dehn fillings of $N$. We have entered a number of invariants --- hyperbolic volume, length spectra, the homology of covers, Seifert invariants --- but we have necessarily had to stop somewhere. Indeed, the proof of \refthm{CosmeticCensus} contains several examples of Dehn fillings that had to be handled ``by hand.'' We emphasize that
all of the tests used by our programs are rigorous.

We point the reader to Remarks~\ref{Rem:CodeForKnotCensus}, \ref{Rem:CosmeticCensusCode}, \ref{Rem:AlmostAmphicheiral}, and \ref{Rem:CommonFillCode} for a description of the top-level routines that check the conjectures, as well as for some notes about running time.

\subsection{Acknowledgements}
As the computational and theoretical aspects of this project spanned several years, we have had occasion to lean on the input of many mathematicians. We thank 
Steve Boyer for suggesting that the Casson invariant works in settings more general than $S^3$.
We thank Jonathan Hanselman and Tye Lidman for explaining their work, including its many applications.
 We thank Renaud Detcherry, Kazuhiro Ichihara, Tetsuya Ito, and Toshio Saito for patiently answering questions about quantum invariants, especially in the setting of chirally cosmetic surgeries.  

We thank Matthew Stover for a crash course on arithmetic invariants. We also thank Craig Hodgson and James Clift for the computations described in \refprop{VolumeV3} and \refrem{Clift}.

We thank Nathan Dunfield and Matthias Goerner for answering numerous computational questions, and for implementing several SnapPy features (fast cover enumeration and verified length spectra) in a way that proved very timely for this project. 
We also thank Dunfield for writing code tying together SnapPy and Regina \cite{Dunfield:CensusFillings}. 
We thank Adam Lowrance for sharing his code for computing the diagram Turaev genus. 
We thank both Nathan Dunfield and Giles Gardam for suggesting the use of normal cores of subgroups (\refdef{HomologyInvt}) as a computable invariant of fundamental groups. 

We thank David Gabai, Effie Kalfagianni, and Joe Melby for independently asking about common fillings of distinct cusped manifolds, prompting us to work out the results of \refsec{CommonFillings}.

Futer was partially supported by NSF grants DMS--1907708 and DMS--2405046. Purcell was partially supported by ARC grant DP210103136. We thank Temple University and the Oberwolfach Mathematics Institute for their hospitality during collaborative visits. All of the computations described in this paper were carried out on a 2021 M1 iMac, purchased for this project using NSF grant DMS--1907708. 


\section{Obstructions and tests from knot invariants}\label{Sec:KnotInvariants}

In this section, we describe a suite of knot-theoretic tests that take a diagram $D(K)$ as input, compute one or more knot invariants, and (for certain ``generic'' values of the invariants) allow one to conclude that $S^3 - K$ has no purely cosmetic surgeries. These tests are convenient to perform in SnapPy, but do not use any hyperbolic geometry. We compare the effectiveness of these tests in subsection~\ref{Subsec:TestComparison}. Then, in \refthm{19CrossingKnots}, we apply these tests to verify \refconj{Cosmetic} for all knots up to 19 crossings.

When discussing knots in $S^3$, we follow the convention of denoting slopes on the boundary torus of $S^3 - K$ as rational numbers. That is, given the canonical meridian $\mu$ and homological longitude $\lambda$, we denote the slope $(p,q) = p\mu + q\lambda$ as the fraction $p/q \in \QQ \cup \{ \infty \}$.

\subsection{Theoretical tools}
Several of the tests below rely on a recent theorem of Hanselman \cite[Theorem 2]{Hanselman:Cosmetic}. In the following theorem, $g(K)$ is the Seifert genus of a knot $K$. Meanwhile, $th(K)$ is an invariant derived from the knot Floer homology $\widehat{HFK}(K)$. Recall that $\widehat{HFK}(K)$ is a bigraded, finitely generated  abelian group, whose nontrivial entries lie on finitely many lines of slope $1$ with respect to the bigrading. The \emph{thickness} $th(K)$ is the largest difference between the $y$--intercepts of these lines.

\begin{theorem}[Hanselman]\label{Thm:Hanselman}
Let $K \subset S^3$ be a nontrivial knot, with Seifert genus $g(K)$. Suppose that $s_1$ and $s_2$ are a purely cosmetic pair of slopes for $K$. Then one of the following holds:
\begin{enumerate}[$\: \: (1)$]
\item $\{s_1, s_2\} = \{\pm 2\}$ and $g(K) = 2$, or
\item $\{s_1, s_2\} = \{ \pm \tfrac{1}{q} \}$, where $1 \leq q \leq \dfrac{th(K) + 2g(K)}{2g(K)(g(K)-1)}$.
\end{enumerate}
In particular, if $th(K) < 2g(K)(g(K)-2)$, then $K$ cannot have purely cosmetic surgeries.
\end{theorem}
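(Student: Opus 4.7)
The plan is to build on the Heegaard Floer obstructions in the literature and then use the immersed-curve reformulation of bordered Floer homology to extract the quantitative bound on $q$.

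First, I would invoke the classical reductions of Boyer--Lines and Ni--Wu. Together they show that any purely cosmetic pair $(s_1, s_2)$ on a nontrivial knot exterior $S^3 - K$ must satisfy $s_2 = -s_1$, force the concordance invariant $\tau(K)$ to vanish, and pin the slope to either an integer or a unit-numerator slope $\pm 1/q$. Combined with Wang's resolution of the genus-one case, plus a short case-analysis of the remaining integer slopes using the Casson--Walker invariant, this narrows the options to the two listed families: $\{\pm 2\}$ with $g(K) = 2$, or $\{\pm 1/q\}$ for some $q \geq 1$. This covers conclusion (1) and reduces the remaining work to bounding $q$ in case (2).

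Next, assuming slopes $\pm 1/q$, I would compare the correction terms $d(S^3_{1/q}(K), \mathfrak{t})$ of the two filled manifolds. An orientation-preserving homeomorphism induces a bijection on $\mathrm{Spin}^c$ structures that preserves $d$-invariants, so one obtains a system of $q$ equalities. The Ozsv\'ath--Szab\'o mapping cone formula writes each such correction term as an explicit linear functional of the knot Floer chain complex $CFK^\infty(K)$, converting the equalities into combinatorial constraints on the complex.

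To translate these constraints into the stated bound, I would then pass to Hanselman's immersed-curve reformulation. There $CFK^\infty(K)$ is encoded by a multicurve $\vartheta(K)$ in a once-punctured torus, consisting of a single \emph{staircase} component of horizontal width $2g(K)$ together with \emph{figure-eight} components whose total contribution is controlled by $th(K)$. The $d$-invariant identities become pairwise matchings of intersection points between $\vartheta(K)$ and the two straight arcs of slopes $\pm 1/q$. The staircase alone produces on the order of $2q \cdot g(K)(g(K)-1)$ mismatched intersections, which must be cancelled by figure-eight contributions bounded by $th(K) + 2g(K)$; rearranging yields $q \leq \bigl( th(K) + 2g(K) \bigr) / \bigl( 2 g(K)(g(K)-1) \bigr)$.

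The hardest step will be this final intersection count. The upstream reductions to $\{\pm 2\}$ and $\{\pm 1/q\}$ are essentially bookkeeping on top of known results, but extracting the sharp constant $2g(K)(g(K)-1)$ in front of $q$ requires carefully tracking how the staircase of width $2g(K)$ cuts across pegs of slope $\pm 1/q$, and which intersections are forced to survive after matching. Similarly, controlling the figure-eight correction by exactly $th(K)+2g(K)$ (rather than some cruder invariant) demands a precise interpretation of thickness in the immersed-curve picture. Once these two quantitative lemmas are in hand, the inequality on $q$, and hence the theorem, falls out by arithmetic.
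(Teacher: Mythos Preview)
The paper does not give its own proof of this statement: it is quoted as \cite[Theorem~2]{Hanselman:Cosmetic} and used as a black box. So there is no proof in the paper to compare your proposal against.

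That said, your outline is a fair high-level sketch of Hanselman's actual argument, with one notable deviation. You frame the quantitative step as matching correction terms $d(S^3_{\pm 1/q}(K))$ and converting those equalities into intersection constraints. In Hanselman's paper the decisive invariant is instead the total rank of $HF_{\mathrm{red}}$ of the two surgeries: he computes $\dim HF_{\mathrm{red}}(S^3_{1/q}(K))$ and $\dim HF_{\mathrm{red}}(S^3_{-1/q}(K))$ in the immersed-curve model, and the discrepancy between the two counts is what produces the $2q\,g(K)(g(K)-1)$ versus $th(K)+2g(K)$ inequality. The $d$-invariant equalities alone are weaker (they are already heavily constrained by the Ni--Wu reductions you cite) and do not obviously yield the sharp linear-in-$q$ lower bound you need. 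If you want to push your version through, you should either switch to comparing $\dim HF_{\mathrm{red}}$, or explain concretely how the correction-term matchings force the same staircase intersection count; as written, that step is a gap.
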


Much more recently, after this paper was first distributed, Daemi, Lidman, and Miller Eismeier proved a powerful theorem \cite[Theorems 1.2 and  6.1]{DLME:Cosmetic} that restricts the possibilities for cosmetic surgeries even further.

\begin{theorem}[Daemi, Lidman, Miller Eismeier]\label{Thm:DLME}
Let $K \subset S^3$ be a nontrivial knot. Then 
\begin{enumerate}[$\: \: (1)$]
\item The pair of slopes $\{ \pm \tfrac{1}{q} \}$ cannot be purely cosmetic for $K$.
\item If $\{\pm 2\}$ is a purely cosmetic pair for $K$, then the Alexander polynomial of $K$ is $\Delta_K = 1$.
\end{enumerate}
\end{theorem}

Combining Theorems~\ref{Thm:Hanselman} and~\ref{Thm:DLME}, we learn that any nontrivial knot in $S^3$ admitting a purely cosmetic pair of slopes $s_1, s_2$ must have $\Delta_K = 1$. This leads to \reftest{Alexander}, which is both powerful and easy to apply.
It also implies that \refconj{Cosmetic} holds for large families, including alternating knots and fibered knots. 

Even taken alone, \refthm{Hanselman} is a powerful result that inspired a considerable amount of subsequent work. Using it directly, as we do in \reftest{HanselmanFull}, is a costly computation because the number of generators in the chain complex for $\widehat{HFK}(K)$ is factorial in the crossing number $c(K)$. Fortunately, both the thickness $th(K)$ and the genus $g(K)$ can be estimated using simpler invariants.

\begin{definition}\label{Def:Turaev}
Given a knot diagram $D = D(K)$, the \emph{diagram Turaev genus} $g_T(D)$ is an invariant of $D$ that can be computed from a single Kauffman state of $D$. See \cite[Section 1.1]{ChampanerkarKofman:TuraevSurvey} for the exact definition.
The \emph{Turaev genus} of $K$, denoted $g_T(K)$, is a knot invariant defined by minimizing $g_T(D)$ over all diagrams of $K$. A knot $K$ is alternating if and only if $g_T(K) = 0$ \cite[Corollary 4.6]{DFKLS:graphs-on-surfaces}.

The important facts for our purposes are that $g_T(D)$ is quick to compute and that it satisfies
\[
th(K) \leq g_T(K) \leq g_T(D).
\]
Here, the first inequality is a theorem of Lowrance~\cite{Lowrance:TuraevThickness} and the second inequality holds by the definition of $g_T(K)$.
\end{definition}

\subsection{Practical tests}
The following tests are ordered roughly according to computational difficulty. The first three tests use the Alexander polynomial $\Delta_K(t)$, normalized to be symmetric in $t$ and $t^{-1}$.

\begin{test}[Alexander polynomial test]\label{Test:Alexander}
Given a knot $K \subset S^3$, compute the Alexander polynomial $\Delta_K$. If $\Delta_K \neq 1$, then Theorems~\ref{Thm:Hanselman} and~\ref{Thm:DLME} imply that $K$ cannot have purely cosmetic surgeries.
\end{test}

\begin{test}[Casson invariant test]\label{Test:Casson}
Given a 3-manifold $N = Y - K$, where $Y$ is an integer homology sphere, compute the (symmetrized) Alexander polynomial $\Delta_N = \Delta_K$ and the Casson invariant $\frac{1}{2} \Delta''_K(1)$. If $\Delta''_K(1) \neq 0$, then a theorem of Boyer and Lines \cite[Proposition 5.1]{BoyerLines} implies that $N$ cannot have purely cosmetic surgeries.
\end{test}

We remark that \reftest{Casson} is the only test  described in this section  that works outside the realm of knot complements in $S^3$. For this reason, it is not strictly subsumed by \reftest{Alexander}.

\begin{test}[Quick genus--thickness test]\label{Test:GenusThickQuick}
Given a knot $K \subset S^3$ with diagram $D = D(K)$, compute the diagram Turaev genus $g_T(D)$. 
Next, compute the Alexander polynomial $\Delta_K$, which provides a lower bound on the Seifert genus:
\[
\tfrac{1}{2} \spn \Delta_K  \leq g(K).
\]
If $g_T(D) < \spn \Delta_K (\tfrac{1}{2} \spn \Delta_K -2)$, then Lowrance's theorem~\cite{Lowrance:TuraevThickness} implies $th(K) < 2g(K)(g(K)-2)$,
hence Hanselman's \refthm{Hanselman} implies $K$ cannot have purely cosmetic surgeries.

If $g_T(D) = 0$, then $K$ is alternating \cite[Corollary 4.6]{DFKLS:graphs-on-surfaces}. If, in addition, $\spn \Delta_K = 2$, then $g(K) = 1$, hence Wang's theorem \cite{Wang:GenusOne} implies $K$ cannot have purely cosmetic surgeries.
\end{test}

We remark that for knots in $S^3$, \reftest{Casson} and \reftest{GenusThickQuick} are both strictly weaker than \reftest{Alexander}. However, since \reftest{Alexander} was not available during our initial work on this project, both  \reftest{Casson} and \reftest{GenusThickQuick} are used in our proof of \refthm{19CrossingKnots}.

The next two tests use the Jones polynomial $V_K(t)$.

\begin{test}[Jones derivative test]\label{Test:JonesDeriv}
Given a knot $K \subset S^3$, compute the Jones polynomial $V_K(t)$ and the third derivative $V_K'''$. If $V_K'''(1) \neq 0$, then a theorem of Ichihara and Wu~\cite{IchiharaWu}  implies that $K$ cannot have purely cosmetic surgeries.
\end{test}

\begin{test}[TQFT test]\label{Test:Detcherry}
Given a knot $K \subset S^3$, with diagram $D = D(K)$, compute the diagram Turaev genus $g_T(D)$. Next, compute the Jones polynomial $V_K(t)$ and its evaluation $V_K(e^{2\pi i/5})$. 
If $g_T(D) \leq 15$ and $V_K(e^{2\pi i/5}) \neq 1$, then Detcherry's theorem~\cite[Theorem 1.4]{Detcherry:Cosmetic}, combined with \refthm{Hanselman},  implies that $K$ cannot have purely cosmetic surgeries.
\footnote{If $V_K(e^{2\pi i/5}) \neq 1$, Detcherry's theorem \cite[Theorem 1.4]{Detcherry:Cosmetic} implies that any cosmetic surgery pair for $K$ is of the form $\{ \pm \frac{1}{5k} \}$. Thus \refthm{DLME} implies that  $K$ cannot have purely cosmetic surgeries, even without any information about $g_T(D)$. This makes \reftest{Detcherry} simpler to state and apply, but does not affect its power for small-crossing knots because all diagrams up to 31 crossings satisfy $g_T(D) \leq 15$.}
\end{test}

Detcherry's argument~\cite{Detcherry:Cosmetic} works by comparing Turaev--Viro invariants $TV_r(M)$, where $M$ is a Dehn filling of $K$ and $r \geq 5$ is a prime number. 
When $r=5$, the criterion can be expressed in terms of $V_K(e^{2\pi i/5})$, leading to a test that is both easy to state and easy to implement. We have not attempted to implement Detcherry's more general result~\cite[Theorem 1.5]{Detcherry:Cosmetic} for larger values of $r$, in part because \reftest{Detcherry} is already stunningly effective.

The final pair of tests uses invariants derived from the knot Floer homology $\widehat{HFK}(K)$.

\begin{test}[$\tau$ invariant test]\label{Test:Tau}
Given a knot $K \subset S^3$, compute the invariant $\tau(K)$ using $\widehat{HFK}(K)$. If $\tau(K) \neq 0$, then a theorem of Ni and Wu \cite{NiWu:Cosmetic} implies that $K$ cannot have purely cosmetic surgeries.
\end{test}

\begin{test}[HFK genus-thickness test]\label{Test:HanselmanFull}
Given a knot $K \subset S^3$, compute the Seifert genus $g(K)$ and the thickness $th(K)$ using $\widehat{HFK}(K)$. If $g(K) = 1$, then Wang's theorem \cite{Wang:GenusOne} implies that $K$ cannot have purely cosmetic surgeries.
If $th(K) < 2g(K)(g(K)-2)$, then Hanselman's \refthm{Hanselman} implies $K$ cannot have purely cosmetic surgeries.
\footnote{If  $g(K) \neq 2$, then \refthm{Hanselman} implies any cosmetic pair  for $K$  is of the form $\{\pm \frac{1}{q} \}$, hence \refthm{DLME} implies $K$ has no cosmetic surgeries at all. Thus, in light of \refthm{DLME}, it suffices to verify that $g(K) \neq 2$. This makes \reftest{HanselmanFull} simpler to state and to apply, but does not affect its power for small-crossing knots. 
 This is because all knots up to 17 crossings (and probably further) satisfy $th(K) \leq 5$, hence $g(K) \neq 2$ implies $K$ has no cosmetic surgeries by \cite[Corollary 3]{Hanselman:Cosmetic}.}
\end{test}

We remark that when $K$ is alternating,  \reftest{HanselmanFull} provides exactly the same information as \reftest{GenusThickQuick}: both tests rule out cosmetic surgeries whenever $g(K) \neq 2$.

\subsection{Effectiveness of the tests}\label{Subsec:TestComparison}
To assess the comparative effectiveness of the above tests, we ran each of them on the 9,755,328
prime knots up to 17 crossings. Running a single test on this sample took approximately 1.5 days of wall time for the Alexander--based tests, 4 days of wall time for the Jones--based tests, and 6 days of wall time for the Heegaard Floer--based tests. Here, the term \emph{wall time} refers to the clock time between starting the process (on one processor core) and the completion of the process.

Here is how the tests performed:

\smallskip
\begin{center}
\begin{tabular}{| c | c | c | c |}
\hline
Test & Knots eliminated & Knots remaining & Percent remaining \\
\hline
\reftest{Alexander}: Alexander polynomial & 9,752,895 & 2,433 & 0.025\% \\
\reftest{Casson}: Casson invariant & 8,745,275 & 1,010,053 & 10.3\% \\
\reftest{GenusThickQuick}: Quick genus-thickness & 9,668,010& 87,318 & 0.89\% \\
\reftest{JonesDeriv}: Jones derivative & 9,313,098 & 442,230 & 4.5\%  \\
\reftest{Detcherry}: TQFT & 9,755,231& 97 &  0.001\%  \\
\reftest{Tau}: $\tau$ invariant & 7,260,357 & 2,494,971 & 25.6\% \\
\reftest{HanselmanFull}: HFK genus-thickness & 9,740,390 & 14,938 & 0.15\% \\
\hline
\end{tabular}
\end{center}
\smallskip

%

One immediate conclusion is that \reftest{Detcherry} (TQFT) is extremely effective. Indeed, its effectiveness seems to grow as the crossing number increases: the proportion of prime knots with exactly $n$ crossings that satisfy $V_K(e^{2\pi i/5}) = 1$ is
\[
0.0035\% \text{ for } n=15, \quad 0.0017\% \text{ for } n=16, \quad 0.0006\% \text{ for } n=17.
\]
Since \reftest{Detcherry} is considerably slower than the Alexander--based tests, we note that the combination of Tests~\ref{Test:Casson}, \ref{Test:GenusThickQuick}, \ref{Test:JonesDeriv}, and~\ref{Test:Detcherry}  is very fast to run when performed in that order, and serves to rule out the vast majority of knots.
We also note that \reftest{Alexander} (Alexander polynomial), which only became available after our initial work on this project, is both very fast and very powerful all on its own.

The following table describes the level of dependence between the various tests. Given events $A$ and $B$, let $P(A |B)$ denote the conditional probability of $A$, given $B$. The ratio $\frac{P(A | B)}{P(A)}$ is called the \emph{normalized likelihood of $A$ given $B$}. 
Bayes' Theorem says that normalized likelihood is a symmetric quantity:
\[
\frac{P(A | B)}{P(A)} = \frac{P(B | A)}{P(B)}.
\]
In the following table, $B_i$ represents the event ``a random prime knot of up to 17 crossings survives after applying test $i$.'' Then, the $(i,j)$ entry of the table is the normalized likelihood $\frac{P(B_i | B_j)}{P(B_i)}$. Thus values less than $1$ suggest that tests $i$ and $j$ are negatively correlated, while values greater than $1$ suggest that they are positively correlated. Diagonal entries represent the inverse probability $ \frac{1}{P(B_i)}$ that a knot survives after test $i$; as expected, the diagonal entries for \reftest{Alexander}, \reftest{Detcherry}, and \reftest{HanselmanFull} are particularly large because those tests are particularly effective. 

\smallskip

\begin{center}
\begin{tabular}{| c | c | c | c | c | c | c | c | }
 \hline
 & \reftest{Alexander} & \reftest{Casson} & \reftest{GenusThickQuick} & \reftest{JonesDeriv} & \reftest{Detcherry} & \reftest{Tau} & \reftest{HanselmanFull} \\
 \hline 
 \reftest{Alexander} (Alexander)            & 4010 & 9.658	& 111.7 &	6.909 & 0 & 3.409 & 66.567 \\
 \reftest{Casson}  (Casson)               & & 9.658 & 1.334 & 3.851 & 1.394 & 1.413 & 0.952 \\
 \reftest{GenusThickQuick} (Quick)  & &  & 111.7 & 1.523 & 21.88 & 1.872 & 111.7 \\
 \reftest{JonesDeriv}  (Jones deriv)         & &  &  & 22.05 & 0.455	& 1.623 & 0.997 \\
\reftest{Detcherry}  (TQFT)            & &  &  &  & 100570 & 0.846 & 127.9 \\
\reftest{Tau}   ($\tau$ invariant)                     & &  &  &  &    & 3.910 & 1.787 \\
\reftest{HanselmanFull} (HFK genus)     & &  &  &  &  &  & 653.0 \\
\hline
\end{tabular}
\end{center}


\smallskip

The table contains a notable $0$, meaning that exactly \emph{zero} prime knots up to 17 crossings survive both \reftest{Alexander} (Alexander) and \reftest{Detcherry} (TQFT). However, this is not actually evidence of negative correlation: if \reftest{Alexander} and \reftest{Detcherry} were truly independent, we would expect $0.02$ knots to survive both, but of course the number of knots is an integer. Since none of the other entries in the table is substantially below $1$, we see that any negative correlations between the tests are very slight.

Some of the positive correlations in the table have a known cause.
For instance, \reftest{Alexander} (Alexander) is strongly positively correlated with  \reftest{GenusThickQuick} (quick genus--thickness) because any knot with $\Delta_K = 1$ must also survive all other  tests based on the Alexander polynomial. Similarly, 
 \reftest{GenusThickQuick} (quick genus--thickness) is strongly positively correlated with \reftest{HanselmanFull} (HFK genus-thickness) because any knot that survives \reftest{HanselmanFull} must also survive \reftest{GenusThickQuick}. The strong positive correlation between \reftest{Detcherry} (TQFT) and \reftest{HanselmanFull} (HFK genus-thickness) is more surprising, and merits further study.

\subsection{Results}
Using the above tests, we can prove the following. 

\begin{theorem}\label{Thm:19CrossingKnots}
The cosmetic surgery \refconj{Cosmetic} holds for all nontrivial knots up to 19 crossings.
\end{theorem}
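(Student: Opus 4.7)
The plan is to combine the knot-invariant tests from \refsec{KnotInvariants} with the hyperbolic procedure flowing from \refthm{CosmeticImproved}. Tao's result on connected sums \cite{Tao:ConnectedSumCosmetic} reduces \refconj{Cosmetic} for composite knots to the prime case, so it suffices to verify the conjecture for each of the 352{,}152{,}252 prime knots with at most 19 crossings.

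I would first filter the census using Tests~\ref{Test:Casson}--\ref{Test:Detcherry}, applied in order of increasing computational cost: the Casson invariant, the quick genus--thickness test, the Jones derivative, and the TQFT test. The comparative effectiveness data at 16 crossings indicates that these four cheap invariants together dispatch the overwhelming majority of the list. The critical efficiency gain over prior work is that \reftest{GenusThickQuick} extracts Hanselman's thickness obstruction from only the Alexander polynomial together with a diagrammatic Turaev--genus bound, so we never have to build the factorial-size chain complex for $\widehat{HFK}(K)$ that bottlenecked Hanselman's 16--crossing verification and Detcherry's 17--crossing verification. Knots that survive all four of the cheap filters are then passed through the heavier Floer-theoretic \reftest{Tau} and \reftest{HanselmanFull}.

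Any knot $K$ not eliminated by the invariant tests is handed to the hyperbolic procedure of \refsec{CosmeticProcedure}. If $K$ is a torus knot or a nontrivial cable, Ni--Wu \cite{NiWu:Cosmetic} or Tao \cite{Tao:CablesCosmetic} already rules out purely cosmetic surgeries; otherwise $N = S^3 - K$ is hyperbolic. In the hyperbolic case, \refthm{CosmeticImproved} together with Agol's slope-counting lemma reduces the question to an explicit finite list of slope pairs $(s,s')$ that could conceivably form a purely cosmetic pair, and each candidate pair can be distinguished by the rigorously verified hyperbolic invariants and covering-space homology tests implemented in the accompanying code. The homological longitude $\lamhom$ is fixed by every self-homeomorphism of $N$, which ensures that symmetries of $N$ itself do not produce purely cosmetic pairs among the long slopes.

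The main obstacle is computational scale rather than conceptual novelty: at 352 million prime knots the census is roughly an order of magnitude larger than Detcherry's 17--crossing list, so it is essential that the cheap filter remains truly cheap on average and that the survivor set handed to the expensive Floer and hyperbolic stages is small enough to be processed in a reasonable time. A secondary subtlety is that the hyperbolic stage depends on SnapPy's non-rigorous systole computation; however, on the very small expected survivor set, any residual cases can be audited by hand if needed, which is why the statement of the theorem carries no systole proviso.
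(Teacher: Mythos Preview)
Your filtering stage matches the paper exactly: reduce to prime knots via Tao, then run Tests~\ref{Test:Casson}--\ref{Test:Detcherry} across Burton's census. The paper reports that this leaves precisely two survivors, the 18--crossing knots $K_1$ and $K_2$ of \reffig{Hard18Crossing}, both with $g(K_i)=2$, $th(K_i)=0$, $\tau(K_i)=0$. So your proposed next step of applying Tests~\ref{Test:Tau} and~\ref{Test:HanselmanFull} is harmless but yields nothing.

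Where you diverge is the endgame. You hand the survivors to the systole-based procedure of \refsec{CosmeticProcedure}, acknowledge the resulting non-rigorous dependence, and wave at an unspecified ``audit by hand.'' The paper's endgame is that audit, and it avoids systoles entirely: since the $\widehat{HFK}$ computation already gives $g(K_i)=2$ and $th(K_i)=0$, Hanselman's \refthm{Hanselman} is applied \emph{positively} (not merely as a pass/fail test) to conclude that the only candidate purely cosmetic pairs are $\{\pm 1\}$ and $\{\pm 2\}$. Those four fillings are hyperbolic and are distinguished by verified volume alone. This is why \refthm{19CrossingKnots} carries no systole proviso, whereas your route as written would. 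The fix is simple: after the cheap filters, compute $\widehat{HFK}$ on each survivor not to run \reftest{HanselmanFull} but to extract $g$ and $th$ and feed them back into \refthm{Hanselman} to enumerate the finite slope list directly.
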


\begin{proof}
Tao proved that \refconj{Cosmetic} holds for all composite knots \cite{Tao:ConnectedSumCosmetic}. Thus it suffices to check prime knots. 

Burton has compiled an enumeration of all 352,152,252 prime nontrivial knots up to 19 crossings~\cite{Burton:350MillionKnots}. We ran Tests~\ref{Test:Casson}--\ref{Test:Detcherry} on this list of knots. These tests rule out purely cosmetic surgeries on all but two of the knots: the knots $K_1$ and $K_2$ that are shown in Figure~\ref{Fig:Hard18Crossing}.

\begin{figure}
\begin{center}
\begin{overpic}[width=2in]{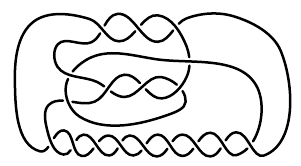}
\put(10,-8){DT:rarpjonmlkrqbafedcigh}
\put(93, 43){$K_1$}
\end{overpic}
\hspace{0.5in}
\begin{overpic}[width=2in]{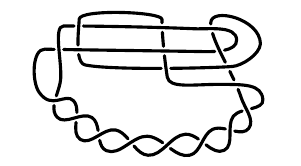}
\put(5,-8){DT:rarmlkjipNqedcbaOGRhF}
\put(88,43){$K_2$}
\end{overpic}
\vspace{0.1in}
\end{center}
\begin{caption} 
{The two prime knots up to 19 crossings that were not ruled out by Tests~\ref{Test:Casson}--\ref{Test:HanselmanFull}. Both exceptional knots have 18 crossings.}
\label{Fig:Hard18Crossing}
\end{caption}
\end{figure}

Now, \refthm{Hanselman} says that the only potential purely cosmetic pairs of surgery slopes on $K_i$ are $\{ \pm 1 \}$ and $\{ \pm 2 \}$. In each case, the filled manifolds are hyperbolic and are distinguished using verified hyperbolic volume.
\end{proof}

\begin{remark}\label{Rem:WhyTestsFail}
There are good theoretical reasons why Tests \ref{Test:Casson}--\ref{Test:HanselmanFull} all fail on the knots $K_1$ and $K_2$. Both of these knots  satisfy $g(K_i)=2$ and $\tau(K_i)=0$, so the $\widehat{HFK}$--based Tests~\ref{Test:Tau}--\ref{Test:HanselmanFull} do not provide any information.  
Since the two knots are \emph{thin} (meaning $th(K_i) = 0$) and have Alexander polynomials of the form appearing in \cite[Theorem 5]{Hanselman:Cosmetic}, Heegaard Floer invariants cannot distinguish the $\{ \pm 1 \}$ and $\{ \pm 2 \}$ Dehn surgeries on these knots. 
Consequently, \reftest{GenusThickQuick} (a quicker and weaker version of \reftest{HanselmanFull}) is similarly powerless.

In addition, performing $\pm 1/5$ surgery on a crossing circle about the large twist region of $K_i$ has the effect of replacing 9 half-twists by $-1$ half-twist, or 10 half-twists by 0. This surgery transforms each $K_i$ into the unknot, while leaving the value $J_K(e^{2\pi i / 5})$ unchanged. Compare \cite[Lemma 4.2]{Detcherry:Cosmetic}. Thus \reftest{Detcherry} cannot provide any information about $K_i$.

At the same time, the knots $K_1$ and $K_2$ have nontrivial Alexander polynomials, so \reftest{Alexander} excludes purely cosmetic surgeries on this pair. We did not use \reftest{Alexander} in the proof of \refthm{19CrossingKnots}, because \refthm{DLME} only became available after the first draft of this paper.
\end{remark}

\begin{remark}
  Running Tests~\ref{Test:Casson}--\ref{Test:Detcherry} on a database of 352 million knots took approximately 5 months of wall time (running the tests on two cores in parallel).

If one restricts to the hyperbolic and homological tools of \refsec{Tools} only, the search becomes far slower. Using \refthm{FinitenessOfPairs}, we were able to  check \refconj{Cosmetic} on the far smaller sample of 313,230 prime knots up to 15 crossings in approximately two weeks. This was part of the computation that yielded \refthm{ChiralKnots}.
\end{remark}

\begin{remark}\label{Rem:CodeForKnotCensus}
 \refthm{19CrossingKnots} was proved by running two python programs, available  on GitHub~\cite{FPS:GitHubCosmetic}. The first program,  {\tt prune\_using\_invariants}, performs the tests described above, allowing the user to turn specific tests on or off.
To prove \refthm{19CrossingKnots}, the program stepped through Burton's census and found that all knots except the two shown in \reffig{Hard18Crossing} were excluded.

The python program {\tt check\_knots} uses mostly hyperbolic methods (\refthm{FinitenessOfPairs}, combined with slope restrictions proved by Ni and Wu \cite{NiWu:Cosmetic}) to identify a finite number of slopes to check and then compare those slopes. This program was used on the remaining two knots. One notable feature of {\tt check\_knots} is that it works with a knot complement without needing access to a knot diagram. For example, it can quickly verify \refconj{Cosmetic} on the 1267 knot complements in the SnapPy census~\cite{snappy}, even though some of these knots have diagrams with hundreds of crossings.
\end{remark}

\section{Beyond the 3-sphere using hyperbolic tools}\label{Sec:Tools}

In this section, we discuss tools to test the cosmetic surgery conjecture, \refconj{Cosmetic}, on an arbitrary hyperbolic 3-manifold with one cusp. These tools have been written into a computer program, described in \refsec{CosmeticProcedure}. The tools are also used to test the chirally cosmetic surgery conjecture, \refconj{ChiralKnotCosmetic}, and to find common fillings of different surgery parents in \refsec{CommonFillings}.

The main philosophy behind our programs is to use geometry as much as possible. In particular, we compare closed hyperbolic manifolds using hyperbolic invariants such as the (complex) volume and the (complex) length spectrum. Rigorous bounds on which Dehn fillings should be compared require certain sharpened and simplified versions of theorems by Hodgson--Kerckhoff~\cite{hk:shape} and of a theorem from our previous work~\cite{FPS:EffectiveBilipschitz}. These results are described in Sections~\ref{Sec:Linearized}--\ref{Sec:ImprovedCosmetic}.

Non-hyperbolic manifolds are still handled from the perspective of geometrization. We compare closed, non-hyperbolic 3-manifolds by decomposing them along spheres and tori, as needed, into hyperbolic and Seifert fibered pieces. Hyperbolic pieces are compared as above, while Seifert fibered pieces are compared using their Seifert invariants.

\subsection{Invariants based on homology}
In addition to geometric tools, our program employs tools from algebraic topology to speed up computations and distinguish difficult cases.

Since the first homology  $H_1(M)$ of a 3-manifold $M$ is fast to compute, we sort the Dehn fillings of a cusped manifold $N$ into buckets that have the same first homology, and only compare pairs of manifolds within the same bucket. (See \reflem{Homology} and \S\ref{Step:SystoleShort}.) 
In addition to $H_1(M)$, we consider the first homology groups of small-degree covers. 

\begin{definition}\label{Def:HomologyOfCovers}
Let $M$ be a closed 3-manifold and $n$ a natural number. The 
\emph{cover homology invariant up to index $n$} is the set
\[
\calC(M,n) = \{ \big([\pi_1 M :H],  H^{ab} \big)  \mid [\pi_1 M : H] \leq n \}.
\]
In words, the invariant $\calC(M,n)$ packages together all of the homology groups of covers of $M$ up to degree $n$, while recalling the degree of the cover for each entry. 
\end{definition}

The recent versions of SnapPy (version 3.1 and later) can compute covers of $M$ extremely fast by employing multiple processor cores at the same time. This makes it feasible to compute $\calC(M,n) $ for $n \leq 7$, even on medium-sized examples.  In \refsec{CosmeticProcedure}, we use this invariant for both hyperbolic and non-hyperbolic manifolds. See \refrem{AlmostAmphicheiral} for more computational notes.

A slightly more sophisticated invariant is a package of homology groups of both small-index subgroups and their normal cores.

\begin{definition}\label{Def:HomologyInvt}
Given  a closed 3-manifold $M$, consider a finite-index subgroup $H < \pi_1 M$. The \emph{normal core $K = \operatorname{Core}(H)$} is the  intersection of all conjugates of $H$. Consider the $4$--tuple
\[
D(H) = \big( [\pi_1 M :H], [\pi_1 M: K], \, H^{ab}, \, K^{ab} \big)
\]
Then, for a natural number $n$, the \emph{core homology invariant up to index $n$} is the set
\[
\calD(M,n) = \{ D(H)  \mid [\pi_1 M : H] \leq n \}.
\]
\end{definition}

In practice, computing the normal core of a subgroup $H$ requires working in GAP~\cite{GAP}, which is slower than the cover enumeration in SnapPy. However, this computation is practical and the invariant $\calD(M,n)$ seems to be powerful.
In prior work, Dunfield successfully used this invariant (with $n=6$) to distinguish roughly 300,000 closed hyperbolic 3-manifolds~\cite[Proof of Theorem 1.4]{Dunfield:ComputationOrderability}.

Next, we consider the arrangement of homology groups of fillings in the Dehn surgery plane.

\begin{definition}\label{Def:HomologicalLongitude}
Let $N$ be a compact oriented 3-manifold with torus boundary. The \emph{homological longitude} of $N$ is a primitive homology class in $H_1(\bdy N, \ZZ)$ that is trivial in $H_1(N, \QQ)$. By the ``half lives, half dies'' lemma \cite[Lemma~3.5]{Hatcher:3Manifolds}, the homological longitude always exists and is unique up to sign.
\end{definition}

\begin{lemma}\label{Lem:Homology}
  Let $N$ be a compact oriented 3-manifold with torus boundary. Let $\lamhom$ be its homological longitude, and let $\muhom$ be any curve on $\bdy N$ whose intersection number with $\lamhom$ is $1$. Then there is an integer $k = k(N)$ such that the following hold for every relatively prime pair $(p,q)$, giving slope $s=p\muhom+q\lamhom$.
  
 \begin{itemize}
 \item The first Betti number of the Dehn filling $N(s)$, filled along the slope $s=p\muhom + q\lamhom$, satisfies
\[ b_1(N(s)) = 
 \begin{cases} b_1(N) & p = 0 \\
 b_1(N) -1 & p \neq 0. \end{cases}
 \]
 \item For all $p \neq 0$, the torsion of the first homology group of $N(s)$ satisfies
   $ | \operatorname{Tor} H_1 (N(s)) | = k |p| $.
 \end{itemize}
 \end{lemma}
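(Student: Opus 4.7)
The plan is to compute $H_1(N(s); \ZZ)$ via the Mayer--Vietoris sequence for the decomposition $N(s) = N \cup_T V$, where $V \cong D^2 \times S^1$ is the filling solid torus with meridian mapped to $s$. The relevant portion reads
\[
H_1(T; \ZZ) \xrightarrow{\phi} H_1(N; \ZZ) \oplus H_1(V; \ZZ) \to H_1(N(s); \ZZ) \to 0.
\]
Writing $\alpha = \iota_N(\muhom)$, $\beta = \iota_N(\lamhom)$, and letting $e_V$ generate $H_1(V;\ZZ)\cong\ZZ$, a short linear-algebra calculation in the basis $\{\muhom,\lamhom\}$ gives $\phi(\muhom) = (\alpha, q\, e_V)$ and $\phi(\lamhom) = (\beta, -p\, e_V)$. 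The primitivity $\gcd(p,q)=1$ lets me invoke B\'ezout's identity $up + vq = 1$ to solve $e_V = u\beta - v\alpha$ in the cokernel. Substituting back and using $\gcd(u,v)=1$, I expect to arrive at the clean identification
\[
H_1(N(s); \ZZ) \;\cong\; H_1(N; \ZZ) \big/ \langle p\alpha + q\beta \rangle \;=\; H_1(N;\ZZ) \big/ \langle \iota_N(s) \rangle.
\]

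Next I would decompose $H_1(N; \ZZ) = \ZZ^r \oplus T$, with $r = b_1(N)$ and $T = \operatorname{Tor} H_1(N;\ZZ)$, and write $\alpha = (\alpha_F, \alpha_T)$. By definition of the homological longitude, $\beta$ is torsion, so $\beta = (0, \beta_T)$. The ``half lives, half dies'' lemma (tensored with $\QQ$) shows the kernel of $\iota_N \otimes \QQ$ is precisely the line through $\lamhom$, so $\alpha_F \neq 0$. When $p = 0$ the relation $p\alpha+q\beta = q\beta$ is torsion, leaving the free rank unchanged; when $p \neq 0$, its projection to $\ZZ^r$ is the nonzero vector $p\alpha_F$, and quotienting by it drops the rank by exactly one. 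This yields the Betti number statement.

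For the torsion claim, assume $p \neq 0$. Since $p\alpha_F \neq 0$ in $\ZZ^r$, the composition $T \hookrightarrow \ZZ^r \oplus T \to H_1(N(s); \ZZ)$ is injective, and projection onto the free quotient fits into a short exact sequence
\[
0 \to T \to H_1(N(s); \ZZ) \to \ZZ^r \big/ \langle p\alpha_F \rangle \to 0.
\]
Letting $d = d(N)$ denote the divisibility of $\alpha_F$ in $\ZZ^r$ (the gcd of its coordinates in any basis), completing $\alpha_F/d$ to a basis identifies the target with $\ZZ^{r-1} \oplus \ZZ/(d|p|)$. Multiplicativity of orders then gives $|\operatorname{Tor} H_1(N(s);\ZZ)| = |T|\cdot d \cdot |p|$, and I will set $k(N) = d\cdot |T|$. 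The final task is to verify that $k(N)$ depends only on $N$: replacing $\muhom$ by $\muhom + n\lamhom$ shifts $\alpha$ by the torsion element $n\beta$, leaving $\alpha_F$, and hence $d$, unchanged.

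The main obstacle will be the Mayer--Vietoris bookkeeping, in particular checking that B\'ezout's identity cleanly eliminates the core generator $e_V$ so that $H_1(N(s);\ZZ)$ really is a quotient of $H_1(N;\ZZ)$ rather than a genuine extension by $\ZZ$. Once that clean quotient description is established, the remaining arguments reduce to elementary finite abelian group computations.
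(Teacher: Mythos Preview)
Your proposal is correct and follows essentially the same route as the paper: both reduce via Mayer--Vietoris to the identification $H_1(N(s)) \cong H_1(N)/\langle \iota_*(s)\rangle$, then split $H_1(N)$ into free and torsion parts to read off the Betti number and torsion order. The paper streamlines your first step by attaching only a $2$--handle along $s$ (noting that the remaining $3$--handle does not affect $H_1$), so that the Mayer--Vietoris sequence has $H_1(D^2)=0$ in place of your $H_1(V)\cong\ZZ$ and the B\'ezout elimination of $e_V$ becomes unnecessary; thereafter the two arguments coincide, with your divisibility $d$ equal to the paper's integer $m$ and your $|T|$ equal to the paper's $|F|$.
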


\begin{proof}
Let  $i \from \bdy N \to N$ be the inclusion map, and let $i_*  \from H_1(\bdy M) \to H_1(M)$ be the induced homomorphism. By the ``half lives, half dies'' lemma  \cite[Lemma 3.5]{Hatcher:3Manifolds} and the definitions of $\lamhom$ and $\muhom$, we have that 
$i_*(\lamhom)$ is finite order and $i_*(\muhom)$ is infinite order in $H_1(N)$. Let $F$ be a maximal finite subgroup of $H_1(N)$ containing $i_*(\lamhom)$. Let $C = \langle \nu \rangle$ be a maximal cyclic subgroup of $H_1(N)$ containing $i_*(\muhom)$, so that $i_*(\muhom) = m \nu$. By the maximality of $C$ and $F$, we have a splitting
\[
H_1(N) \cong B \oplus C \oplus F,
\]
where $B \cong \ZZ^{b_1(N)-1}$ consists of non-peripheral homology classes, and $C,F$ are as above.

Now, let $s = p \muhom + q \lamhom$ be a Dehn filling slope. To compute the homology groups of $N(s)$, we first attach a $2$--handle $D^2$ along the slope $s$, and then attach the remaining 3-handle. The 3-handle has no effect on $H_1(N(s))$. The relevant portion of the Mayer--Vietoris sequence in reduced homology is
\[
\ldots \to H_1(S^1) \xrightarrow{ \: \alpha \: }  H_1(N) \oplus H_1(D^2) \xrightarrow{ \: \beta \: }  H_1(N \cup D^2) \xrightarrow{ \: \bdy \: } \widetilde H_0 (S^1) \to  \ldots
\]
Since $\widetilde H_0 (S^1) = 0$, we know that $\beta$ is onto. The kernel of $\beta$ is the image of $\alpha$, generated by $i_*(s) = i_*(p\muhom + q \lamhom)$.

If $p = 0$, then  $\ker(\beta) = \langle i_*(q \lamhom) \rangle \subset F$. Recall that $F$ is finite. Thus $b_1(N(s)) = b_1(N \cup D^2) = b_1(N)$, as claimed.

If $p \neq 0$, then $\ker(\beta) = \langle i_*(p \muhom + q \lamhom ) \rangle$, an infinite cyclic subgroup of $C \oplus F$. Thus $b_1(N(s)) = b_1(N \cup D^2) = b_1(N) -1$, as claimed. Recall that $i_*(p \muhom) = p m \nu$. Furthermore,
\begin{align*}
H_1 (N(s)) &\cong \quotient{H_1(N)}{\ker(\beta)} \\
& \cong \quotient{B \oplus C \oplus F}{\langle i_*(p \muhom + q \lamhom ) \rangle} \\
& \cong B \oplus \left[   \quotient{ \langle \nu \rangle \oplus F}{pm \nu = - q i_*(\lamhom )}   \right]
\end{align*}
In particular, the free part of $H_1(N(s))$ is isomorphic to $B \cong \ZZ^{b_1(N) -1}$. The torsion part of  $H_1(N(s))$ is the group in square brackets, whose order is $|pm| \cdot |F|$. In particular, the number $k$ in the statement of the lemma is $m |F|$.
\end{proof}

\begin{corollary}\label{Cor:HomLongitude}
  The homological longitude $\lamhom$ can never be part of a cosmetic surgery pair. \qed
\end{corollary}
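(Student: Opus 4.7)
The plan is to read off the corollary directly from the Betti-number statement of \reflem{Homology}. Suppose for contradiction that $(\lamhom, s')$ is a cosmetic surgery pair with $s' \neq \lamhom$ as slopes on $\bdy N$. Writing $s' = p\muhom + q\lamhom$ with $\gcd(p,q)=1$, the case $p=0$ would force $q = \pm 1$, giving back $s' = \lamhom$ as an (unoriented) slope, contrary to assumption. Hence $p \neq 0$.

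Now I would apply the first bullet of \reflem{Homology} to each filling. Since $\lamhom$ corresponds to the case $p=0$, the filling along the homological longitude satisfies $b_1(N(\lamhom)) = b_1(N)$. Since $s'$ corresponds to the case $p \neq 0$, the filling along $s'$ satisfies $b_1(N(s')) = b_1(N) - 1$. These two Betti numbers differ, so there cannot be any homeomorphism (orientation-preserving or reversing) from $N(\lamhom)$ to $N(s')$, contradicting the assumption that $(\lamhom, s')$ was cosmetic.

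There is no real obstacle here: the whole content is packaged into \reflem{Homology}. The only minor subtlety worth flagging in the write-up is that slopes are unoriented, so $\lamhom$ and $-\lamhom$ represent the same slope and the dichotomy $p=0$ versus $p \neq 0$ genuinely captures ``$s' = \lamhom$ or not.'' The conclusion then applies equally to purely cosmetic and chirally cosmetic pairs, since the distinction between $N(\lamhom)$ and $N(s')$ comes from a homeomorphism invariant that ignores orientation.
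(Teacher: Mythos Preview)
Your argument is correct and is exactly the intended one: the paper marks this corollary with a \qed\ precisely because it follows immediately from the Betti-number clause of \reflem{Homology}, just as you explain. There is nothing to add.
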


\begin{corollary}\label{Cor:HomologyP=P'}
If the homology groups $H_1( N(p\muhom + q\lamhom))$ and $H_1( N(p'\muhom + q'\lamhom))$ are isomorphic, then $p = \pm p'$. \qed
\end{corollary}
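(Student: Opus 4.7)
The plan is to deduce the corollary as a direct consequence of \reflem{Homology}, using the two pieces of information it gives: the first Betti number and the order of the torsion part of the homology of a Dehn filling.

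First, I would observe that the isomorphism $H_1(N(p\muhom + q\lamhom)) \cong H_1(N(p'\muhom + q'\lamhom))$ forces the two groups to have the same first Betti number and the same torsion order. By the first bullet of \reflem{Homology}, $b_1(N(p\muhom + q\lamhom))$ equals $b_1(N)$ when $p = 0$ and $b_1(N) - 1$ when $p \neq 0$, and likewise for $p'$. Matching Betti numbers therefore implies that $p = 0$ if and only if $p' = 0$. In the case that both vanish, the conclusion $p = \pm p'$ is trivial.

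Next, in the remaining case $p, p' \neq 0$, I would apply the second bullet of \reflem{Homology}, which says that $|\operatorname{Tor} H_1(N(p\muhom + q\lamhom))| = k |p|$ and $|\operatorname{Tor} H_1(N(p'\muhom + q'\lamhom))| = k |p'|$, where $k = k(N)$ is a positive integer depending only on $N$ (indeed $k = m|F|$ in the notation of the lemma, and both $m$ and $|F|$ are positive). Since the two torsion subgroups are isomorphic they have the same order, so $k|p| = k|p'|$, and dividing by the nonzero integer $k$ yields $|p| = |p'|$, i.e.\ $p = \pm p'$.

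There is essentially no obstacle here; the corollary is a bookkeeping consequence of \reflem{Homology}. The only subtle point worth flagging in the writeup is that the constant $k$ is genuinely nonzero, so that the cancellation in the last step is valid — but this is immediate from the construction of $k$ as a product of positive integers inside the proof of the lemma.
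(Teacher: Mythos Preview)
Your argument is correct and is exactly the intended one: the paper leaves this as an immediate consequence of \reflem{Homology} (note the \qed), and your unpacking via Betti numbers and torsion orders is precisely how that consequence is read off.
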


\begin{remark}
The converse of \refcor{HomologyP=P'} can be false. Indeed, there exist cusped manifolds $N$ and Dehn fillings
$(p, q)$ and $(p, q')$ such that $H_1( N(p\muhom + q\lamhom)) \ncong H_1( N(p\muhom + q'\lamhom))$. 
For instance, the SnapPy census manifold $N = \tt{m172}$, equipped with the geometric framing, has homological longitude $\lamhom = (1,0)$. We can select $\muhom = (0,1)$. 
Then, taking $p=1$, we get
\begin{align*}
H_1( N(\muhom + q\lamhom)) \cong
\begin{cases}
\ZZ / 7 \oplus \ZZ/7, & q \equiv 3 \mod 7 \\
\ZZ / 49, & q \not\equiv 3 \mod 7.
\end{cases}
\end{align*}
%
\end{remark}

\subsection{Linearized Hodgson--Kerckhoff bounds}\label{Sec:Linearized}

The next two subsections are fairly technical. The main mathematical upshot is to establish certain estimates that constrain the set of slopes that need to be analyzed for excluding cosmetic surgeries. One of the key results, \refthm{HKSimplerBound}, constrains the set of Dehn fillings that satisfy an upper bound on volume. A second key result, \refthm{CosmeticImproved}, provides an improvement on \cite[Theorem~7.29]{FPS:EffectiveBilipschitz}. These results are combined in \refthm{FinitenessOfPairs}, which constrains the set of \emph{pairs} of slopes to check to a finite and explicitly computable set. 

The results below have an intrinsic intellectual value, because they are close to sharp. In addition, the  applications of these results in \refsec{ImprovedCosmetic} are quite useful: even small improvements on previous estimates provide valuable algorithmic speedups that matter a great deal when dealing with hundreds of thousands of examples. However, both the results and their proofs are technical in nature.
Accordingly, the casual reader who is mainly interested in the procedural and algorithmic aspects of this work is warmly invited to jump ahead to 
\refsec{CosmeticProcedure}.

In this subsection, we recall a theorem of Hodgson and Kerckhoff that bounds the change in volume under Dehn filling, as well as the length of the core geodesic.
 See \refthm{HKVolBound}.
The estimates produced by this result are rather sharp, but are not easy to apply because they involve inverting a function $f(z)$ that is defined via an integral; see \refdef{HKFunctions}. To address this practical shortcoming, we use secant line approximations to derive \refthm{HKSimplerBound}, a consequence of \refthm{HKVolBound} that is slightly weaker but easier to apply.

\begin{definition}\label{Def:Haze}
Fix constants $K = 3.3957$ and $z_0 = \sqrt{\sqrt{5}-2} =  0.4858 \ldots$. For $z \in [ z_0, 1 ]$, define a function
\[ \haze(z) = K \, \frac{z(1-z^2)}{1+z^2}. \]
By a derivative computation, 
 $\haze(z)$ is decreasing and invertible in this domain. Using Cardano's Formula, the inverse function $\haze^{-1}$ can be expressed as follows: 
\[
\haze^{-1}( K x) =  \frac{2 \sqrt{ x^2 + 3 }}{3}   \cos \left(   \frac{\pi}{3}  + \frac{1}{3} \tan^{-1} \! \left(\frac{-3  \sqrt{ -3x^4 - 33x^2 + 3 } }{ x^3 + 18x} \right)  \right) - \frac{x}{3}.
\]
Note that $\haze^{-1}(h)$ is defined and monotonically decreasing on $ [0, 1.0196]$. See \cite[Remark 4.23]{FPS:EffectiveBilipschitz}.
\end{definition}

\begin{definition}\label{Def:HKFunctions}
As above, let $K = 3.3957$ and $z_0 = \sqrt{ \sqrt{5} -2 }$. On the interval $[z_0, 1]$, define
\[
u(z) = K \, \frac{  z^2 (z^4 + 4z^2-1)}{2 \, (z^2 + 1)^3} 
\quad \:\: \text{and} \quad \:\:
v(z) = \frac{K}{4} \left( \frac{-2z^5+z^4-z^3+2z^2-z+1}{(z^2+1)^2} + \tan^{-1}(z) - \frac{\pi}{4} \right) \! .
\]
It is straightforward to check that  $\int_z^1 u(x) \, dx = v(z)$, hence $v'(z)=-u(z)$. Furthermore, $u(z) > 0$ on $(z_0,1)$, hence $v(z)$ is strictly decreasing and satisfies $\lim_{z \to 1} v(z) = 0$.
Next, define
\[
f(z) = K (1-z)  \exp \left(\int_z^1 \frac{-(x^4 +6x^2 +4x +1)}{(x+1)(x^2 + 1)^2} \,dx \right) \! .
\]
The function $f(z)$ is strictly decreasing (hence invertible) on its domain \cite[Lemma~5.4]{AtkinsonFuter:HighTorsion}.
 \end{definition}

Recall that the normalized length $L(\ss)$ of a tuple of slopes $\ss$ is defined in \refdef{NormalizedLength}.

\begin{theorem}[Hodgson--Kerckhoff~\cite{hk:shape}]\label{Thm:HKVolBound}
Let $N$ be a cusped hyperbolic 3-manifold. Let $\ss$ be a tuple of slopes on the cusps of $N$, whose total normalized length satisfies $L = L(\ss) \geq 7.5832$. 
Then the Dehn filling $M = N(\ss)$ is hyperbolic, and  the cores of the filling solid tori form a geodesic link $\Sigma \subset N$. Furthermore, setting  $\zhat = \zhat(\ss) = f^{-1} \Big( \big( \frac{2\pi}{L(\ss)} \big)^2 \Big)$, we have
\[
\vol(N) - \vol(M) \leq v(\zhat) =  \int_{\zhat}^1 u(z) \, dz
\qquad \text{and} \qquad
\len(\Sigma) \leq \frac{\haze(\zhat)}{2\pi}.
\]
\end{theorem}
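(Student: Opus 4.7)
The plan is to derive both estimates from the cone-deformation machinery of Hodgson--Kerckhoff \cite{hk:shape}, as repackaged in our earlier paper \cite{FPS:EffectiveBilipschitz}. The central object is a one-parameter family of hyperbolic cone manifolds $M_t$, homeomorphic to $M = N(\ss)$, with singular locus $\Sigma$ and cone angle $t$ at every component. At $t = 0$ one recovers the complete hyperbolic structure on $N$ (with the components of $\Sigma$ corresponding to the rank-two cusps), while at $t = 2\pi$ one obtains the smooth hyperbolic structure on $M$ with $\Sigma$ a geodesic link. So everything reduces to estimating the variation of $\vol(M_t)$ and of the core lengths along this family.

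First I would verify that the deformation persists from $t=0$ to $t=2\pi$. The numerical threshold $L(\ss) \geq 7.5832$ is calibrated so that Hodgson--Kerckhoff's tube-radius estimate guarantees an embedded standard solid-torus neighborhood of $\Sigma$ throughout the deformation, furnishing the compactness needed to rule out degeneration. The parameter $z \in [z_0, 1]$ plays the role of a normalized tube radius: $z=1$ corresponds to the infinite-radius cusp at $t=0$, and $z=\zhat$ corresponds to the tube realized in $M$. The change of variables $t \leftrightarrow z$ is encoded by the function $f$ of \refdef{HKFunctions}, which at $t=2\pi$ yields precisely $\zhat = f^{-1}\!\bigl((2\pi/L)^2\bigr)$.

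With the deformation in hand, I would combine the Schl\"afli formula
\[
\frac{d}{dt} \vol(M_t) \; = \; -\tfrac{1}{2} \sum_i \ell_i(t)
\]
with the Hodgson--Kerckhoff harmonic-form inequalities that bound the total core length $\sum_i \ell_i(t)$ and its rate of change in terms of the tube radius. Converting the cone-angle variable $t$ to the parameter $z$ by the chain rule, these bounds rearrange to $\abs{d\vol/dz} \leq u(z)$ with $u$ as in \refdef{HKFunctions}. Integrating from $z=\zhat$ up to $z=1$ and using $\vol(N) = \lim_{t \to 0} \vol(M_t)$ yields $\vol(N) - \vol(M) \leq \int_{\zhat}^1 u(z) \, dz = v(\zhat)$. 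The length bound is then obtained from a parallel pointwise estimate, in which the ratio of total core length to cone angle is controlled by an expression that integrates to $\haze(z)/(2\pi)$; evaluating at $t = 2\pi$ delivers $\len(\Sigma) \leq \haze(\zhat)/(2\pi)$.

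The main obstacle is not any single conceptual step but quantitative bookkeeping: tracing the interdependence among $t$, the true tube radius, and the surrogate parameter $z$ through the Hodgson--Kerckhoff differential system, and checking that the constants consolidate into $K = 3.3957$ and the valid interval $[z_0, 1]$. The value $z_0 = \sqrt{\sqrt{5} - 2}$ is the positive root of $z^4 + 4z^2 - 1$, which is exactly where the integrand $u(z)$ would otherwise change sign; the threshold $7.5832$ is then the smallest $L$ for which $f^{-1}((2\pi/L)^2)$ lands safely inside $[z_0, 1)$. Most of this bookkeeping is already carried out in \cite{FPS:EffectiveBilipschitz}, so a careful assembly of results from Sections~4--5 of that paper should suffice.
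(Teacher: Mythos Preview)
Your sketch is a reasonable outline of the cone-deformation argument that actually underlies the Hodgson--Kerckhoff estimates, and nothing in it is wrong as a plan. But it is considerably more than what the paper does. In the paper, \refthm{HKVolBound} is treated as a black-box citation: the hyperbolicity and geodesic-core conclusions are \cite[Theorem~1.1]{hk:shape}, and the volume and length bounds are a direct restatement of \cite[Theorem~5.12]{hk:shape}. The only ``proof'' content is a notational dictionary --- Hodgson and Kerckhoff write their bounds in terms of $H(z) = 1/\haze(z)$ and $G(z) = \tfrac{1}{2} H(z)(1-z^2)/z^2$, and the paper observes that their integrand simplifies to the function $u(z)$ of \refdef{HKFunctions}. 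So your proposal reconstructs the machinery behind the cited theorem (Schl\"afli, tube-radius control, the $t \leftrightarrow z$ change of variables), whereas the paper simply invokes the finished product. One small correction: the bookkeeping you point to lives in \cite{hk:shape} itself, not primarily in \cite{FPS:EffectiveBilipschitz}; the latter repackages the $\haze$ function but does not re-derive the volume integral.
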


\begin{proof}
The conclusion that $M = N(\ss)$ is hyperbolic and $\Sigma$ consists of geodesics is \cite[Theorem 1.1]{hk:shape}. The estimates on $\vol(M)$ and $\len(\Sigma)$ are a partial restatement of \cite[Theorem 5.12]{hk:shape}, with simplified notation. Hodgson and Kerckhoff formulated their theorem  in terms of functions
\[
H(z) = \frac{1}{\haze(z)}, \qquad G(z) = \frac{H(z)}{2} \frac{1-z^2}{z^2}
\]
that are defined on \cite[page 1079]{hk:shape}. The integrand in their upper bound on $\vol(N) - \vol(M)$ simplifies to our function $u(z)$. We remark that \cite[Theorem 5.12]{hk:shape} also includes lower bounds on $\vol(N) - \vol(M)$ and $\len(\Sigma)$, which we will not need.
\end{proof}

We wish to formulate a simpler version of \refthm{HKVolBound}, which replaces the hard-to-evaluate quantity $\zhat = \zhat(\ss) = f^{-1}  \Big( \big( \frac{2\pi}{L(\ss)} \big)^2 \Big)$ by a linear function of $y = 1/L(\ss)^2$. Toward that goal, we have the following lemma.

\begin{lemma}\label{Lem:Simpler_f}
Let $f(z)$ be the function of \refdef{HKFunctions}. Then $f(0.85) = 0.40104\ldots$ and $f(1) = 0$. Furthermore,
\begin{equation}\label{Eqn:f_linear}
f(z)  \geq
\begin{cases}
 \flin (z) = \frac{z - 1}{-0.374} & \text{if }  z \in [0.85,1] \\
 \fpl (z) = f(0.9) + \frac{z - 0.9}{-0.452}  & \text{if } z \in  [0.85,0.9].
 \end{cases}
\end{equation}
Consequently, for $y \in [0, f(0.85)]$, we have
\begin{equation}\label{Eqn:f_inverse_linear}
f^{-1}(y) \geq 
\begin{cases} 
\flin^{-1}(y) = 1 -0.374 y , \\
\fpl^{-1} (y) = \min\{ 1.031 -0.452y, \:  0.9 \}.
 \end{cases}
\end{equation}
\end{lemma}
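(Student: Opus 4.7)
The plan is to reduce \refeqn{f_linear} and \refeqn{f_inverse_linear} to concavity of $f$ on $[0.85, 1]$. The boundary values are immediate: $f(1) = 0$ from the $(1-z)$ factor in \refdef{HKFunctions}, and $f(0.85)$ (together with the auxiliary value $f(0.9)$) is a concrete integral that can be evaluated rigorously to the claimed precision via interval arithmetic on the rational integrand defining $g(z)$. Granted concavity, any chord of $f$ on a subinterval lies below its graph, so the chord from $(1,0)$ to $(0.85, f(0.85))$ provides a lower bound on $[0.85, 1]$, and the chord from $(0.85, f(0.85))$ to $(0.9, f(0.9))$ provides a lower bound on $[0.85, 0.9]$. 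The explicit lines $\flin$ and $\fpl$ are then obtained by rounding the slope denominators slightly (to $0.374$ and $0.452$), chosen so that each stated line lies below the corresponding true chord; this final comparison is an elementary arithmetic check once the boundary values are pinned down numerically.

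To establish concavity, I would compute $f''(z)$ directly. Writing $f(z) = K(1-z) e^{g(z)}$ with $g'(z) = (z^4 + 6z^2 + 4z + 1)/[(z+1)(z^2+1)^2]$, differentiating twice gives
\[
f''(z) = K e^{g(z)} \bigl[ -2 g'(z) + (1-z) (g'(z))^2 + (1-z) g''(z) \bigr],
\]
and since $K e^{g(z)} > 0$, concavity is equivalent to negativity of the bracketed expression on $[0.85, 1]$. The term $-2g'(z)$ dominates: a short estimate gives $g'(z) \geq 3/2$ throughout the interval, contributing at most $-3$, whereas the factor $(1-z) \leq 0.15$ heavily damps the remaining two terms. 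The cleanest rigorous version evaluates the bracketed rational expression on $[0.85, 1]$ by interval arithmetic; alternatively one breaks $[0.85, 1]$ into finitely many subintervals and uses monotonicity of each rational subexpression. This concavity verification is the main obstacle of the proof: the explicit form of $g''$, obtained by differentiating an already cumbersome rational function, is algebraically unwieldy, but nothing more than careful bookkeeping is required.

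The final step is inverting the bounds. Both $f$ and the linear bounds $\flin, \fpl$ are strictly decreasing on the relevant intervals, so $f^{-1}$ is decreasing as well. For $y \in [0, f(0.85)]$, set $z = \flin^{-1}(y) = 1 - 0.374 y \in [0.85, 1]$; then $f(z) \geq \flin(z) = y$ combined with monotonicity of $f^{-1}$ gives $z \leq f^{-1}(y)$, which is precisely $f^{-1}(y) \geq 1 - 0.374 y$. The analogous argument with $\fpl$ covers $y \in [f(0.9), f(0.85)]$ and yields $f^{-1}(y) \geq 1.031 - 0.452 y$ on this range; for $y \leq f(0.9)$, monotonicity of $f^{-1}$ alone forces $f^{-1}(y) \geq 0.9$, and the $\min$ in \refeqn{f_inverse_linear} packages these two cases into a single expression.
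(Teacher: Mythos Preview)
Your proposal is correct and follows essentially the same route as the paper: both arguments reduce \refeqn{f_linear} to concavity of $f$ on $[0.85,1]$, bound $f$ below by secant lines with slightly rounded slopes, and then invert via monotonicity exactly as you do. The only difference is that the paper cites \cite[Lemmas~5.4 and 5.9]{AtkinsonFuter:HighTorsion} for the monotonicity and concavity of $f$ (on the slightly larger interval $[0.83,1]$) rather than computing $f''$ directly, which spares the unpleasant algebra with $g''$ that you flag as the main obstacle.
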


See \reffig{Linearize} for a graph. The subscript in $\fpl^{-1}$ stands for ``piecewise linear.'' 

\begin{figure}
\begin{overpic}[width=4in]{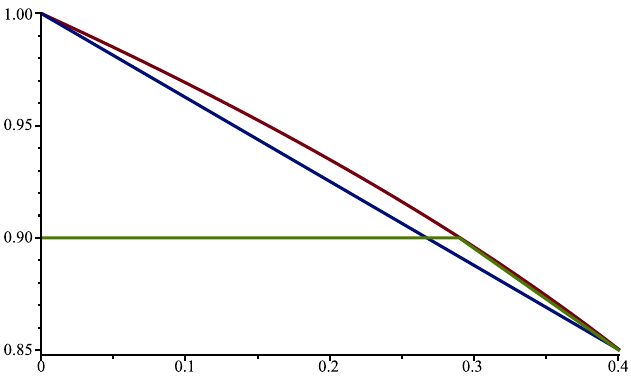}
\put(60,0){$y$}
\put(1,32){$z$}
\put(40,18){$\fpl^{-1}$}
\put(40,30){$\flin^{-1}$}
\put(40,43){$f^{-1}$}
\end{overpic}
\caption{The three functions appearing in \eqref{Eqn:f_inverse_linear}.}
\label{Fig:Linearize}
\end{figure}

\begin{proof}
The function $f(z)$ is decreasing on its domain by \cite[Lemma 5.4]{AtkinsonFuter:HighTorsion}, and concave down on $[0.83,1]$ by \cite[Lemma 5.9]{AtkinsonFuter:HighTorsion}. Consequently, for any sub-interval $[a,b] \subset [0.83,1]$, the graph of $f$ lies above the secant line segment connecting the points $(a,f(a))$ and $(b,f(b))$. Since $f$ is decreasing, the graph of $f$ also lies to the right of this secant line.

Direct computation shows that the slope of the secant line from $(0.85, f(0.85))$ to $(1,0)$ is lower (that is, more negative) than $-1/0.374$, hence the secant line between those points lies above the function $ f_{\text{lin}}(z)$, and the graph of $f(z)$ lies higher still. Similarly, direct computation shows that the slope of the secant line from $(0.85, f(0.85))$ to $(0.9, f(0.9))$ is lower (that is, more negative) than $-1/0.452$, hence $f(z)$ lies above $\fpl(z)$ on the sub-interval $[0.85, 0.9]$.

Solving the linear equation $\flin(z)=y$ for $z$ produces the linear function $\flin^{-1}(y) = 1 -0.374 y$. Since $f$ lies above and to the right of $\flin$, it follows that $f^{-1}(y) \geq \flin^{-1}(y)$ whenever $y \in [0, f(0.85)]$. Similarly, we solve the linear equation $\fpl(z)=y$ and find that $f^{-1}(y) \geq 1.031 - 0.452y  $ for $y \in [f(0.9), f(0.85)]$. Meanwhile, for $y \in [0, f(0.9)]$, we have $f^{-1}(y) \geq 0.9$ because $f$ is decreasing. Thus,  we learn that $f^{-1}(y) \geq \fpl^{-1} (y) =  \min\{ 1.031 -0.452y, \:  0.9 \}$, for all $y \in [0, f(0.85)]$. \end{proof}

Combining \refthm{HKVolBound} and \reflem{Simpler_f} with an estimate from \cite{FPS:EffectiveBilipschitz} yields the following theorem.

\begin{theorem}\label{Thm:HKSimplerBound}
Let $N$ be a cusped hyperbolic 3-manifold. Let $\ss$ be a tuple of slopes on the cusps of $N$, whose total normalized length satisfies $L = L(\ss) \geq 9.93$. 
Then the Dehn filling $M = N(\ss)$ is hyperbolic, and  the cores of the filling solid tori form a geodesic link $\Sigma \subset N$. Furthermore, 
\[
\vol(N) - \vol(N(\ss)) < v \! \left(  1 - \frac{14.77}{L(\ss)^2}   \right) .
\]
where $v(z)$ is as in \refdef{HKFunctions}. The length of $\Sigma$ satisfies
\begin{equation}\label{Eqn:Lnew}
\len(\Sigma) \leq \lnew(L) = \min \left\{  \frac{ \haze \Big( \! \min \big\{ 1.031 -  \tfrac{17.85}{L(\ss)^2} , \:  0.9 \big\} \Big) }{2\pi}, \:
 \frac{2\pi}{L^2 - 14.41} \right \}.
\end{equation}
\end{theorem}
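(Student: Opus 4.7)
The plan is to derive this theorem as a direct corollary of \refthm{HKVolBound}, after using \reflem{Simpler_f} to replace the implicitly-defined quantity $\zhat = f^{-1}\!\big((2\pi/L)^2\big)$ by the explicit lower bounds obtained there. Since the hypothesis $L \geq 9.93$ exceeds the threshold $7.5832$ required by \refthm{HKVolBound}, I immediately conclude that $M = N(\ss)$ is hyperbolic, that the cores of the filling solid tori form a geodesic link $\Sigma$, and obtain the starting bounds $\vol(N) - \vol(M) \leq v(\zhat)$ and $\len(\Sigma) \leq \haze(\zhat)/(2\pi)$.

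The next step is to verify that $y = 4\pi^2/L^2$ lies in the domain $[0, f(0.85)]$ of \reflem{Simpler_f}. Since $L \geq 9.93$ gives $y \leq 4\pi^2/9.93^2 \approx 0.4005 < f(0.85) \approx 0.4010$, this holds; this is precisely why the stronger hypothesis $L \geq 9.93$ is imposed, and the proof breaks down at the boundary of the secant-bound domain if one tries to weaken it further. Applying the linear half of \refeqn{f_inverse_linear} then yields
\[
\zhat \;\geq\; 1 - 0.374 \cdot (4\pi^2/L^2) \;\geq\; 1 - 14.77/L^2,
\]
using the numerical inequality $0.374 \cdot 4\pi^2 = 14.765\ldots < 14.77$. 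Because $v$ is strictly decreasing by \refdef{HKFunctions}, monotonicity gives the asserted volume bound $\vol(N) - \vol(M) \leq v(\zhat) \leq v(1 - 14.77/L^2)$.

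For the length bound, I would apply instead the piecewise-linear half of \refeqn{f_inverse_linear}: since $0.452 \cdot 4\pi^2 = 17.84\ldots < 17.85$, this yields $\zhat \geq \min\{1.031 - 17.85/L^2, \, 0.9\}$. The function $\haze$ is strictly decreasing on $[z_0, 1]$ by \refdef{Haze}, so after dividing by $2\pi$ one obtains an upper bound on $\len(\Sigma)$ that matches the first term in the minimum defining $\lnew(L)$. The second term $2\pi/(L^2 - 14.41)$ is the alternative estimate on core geodesic length cited from \cite{FPS:EffectiveBilipschitz}, and one simply takes the better of the two bounds. The main obstacle is entirely mechanical: carefully verifying the numerical inequalities that pin down the domain of \reflem{Simpler_f} and that justify rounding $0.374 \cdot 4\pi^2$ up to $14.77$ and $0.452 \cdot 4\pi^2$ up to $17.85$, while keeping the monotonicity directions of $v$ and $\haze$ straight.
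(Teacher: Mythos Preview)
Your treatment of the volume bound and of the first (haze-based) term in \refeqn{Lnew} matches the paper's proof almost exactly: invoke \refthm{HKVolBound}, check that $y = (2\pi/L)^2$ lies in $[0,f(0.85)]$, apply the two halves of \reflem{Simpler_f}, and use the monotonicity of $v$ and $\haze$. That part is fine, modulo the cosmetic point that the volume inequality is strict (the rounding $0.374\cdot 4\pi^2 < 14.77$ is strict and $v$ is strictly decreasing), whereas you wrote ``$\leq$''.

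There is, however, a genuine gap in your handling of the second term $2\pi/(L^2-14.41)$. You assert that this bound on $\len(\Sigma)$ is simply ``the alternative estimate on core geodesic length cited from \cite{FPS:EffectiveBilipschitz},'' and then take the minimum. But the result in \cite{FPS:EffectiveBilipschitz} (specifically Lemma~6.10 there, via the proof of Lemma~8.4) only delivers $\len(\Sigma)\leq 2\pi/(L^2-14.41)$ under an additional hypothesis of the form $L^2 \geq 136.89$, i.e.\ $L \geq 11.7$. For $9.93 \leq L \leq 11.7$ you cannot cite that estimate directly. The paper closes this gap by a separate interval-arithmetic verification on the interval $[9.93,11.7]$ showing that the haze-based bound is already strictly smaller than $2\pi/(L^2-14.41)$ there; hence $\len(\Sigma)\leq 2\pi/(L^2-14.41)$ follows for those $L$ as a consequence of the first bound, not from \cite{FPS:EffectiveBilipschitz}. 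Your proposal is missing this case split and the accompanying numerical check, without which the inequality $\len(\Sigma)\leq \min\{\cdot,\cdot\}$ is unproved on the lower portion of the range.
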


\begin{proof}
The statement that $M = N(\ss)$ is hyperbolic and $\Sigma$ consists of geodesics is already included in \refthm{HKVolBound}.

Define $\zhat = \zhat(\ss) = f^{-1} \Big( \big( \frac{2\pi}{L(\ss)} \big)^2 \Big)$. 
The hypothesis $L(\ss) \geq 9.93$ implies $ \big( \frac{2\pi}{L(\ss)} \big)^2 < 0.401 < f(0.85)$, where the second inequality is by \reflem{Simpler_f}. Now, \reflem{Simpler_f} applies to give
\[
1 - \frac{14.77}{L(\ss)^2} \: < \: 1 -0.374 \left(  \frac{2\pi}{L(\ss)} \right)^2 \: \leq \: f^{-1} \left( \Big( \frac{2\pi}{L(\ss)} \Big)^{\! 2} \right)  \: = \:\zhat(\ss).
\]
 Consequently,  \refthm{HKVolBound} gives
\[
\vol(N) - \vol(N(\ss)) \: \leq \:  v(\zhat(\ss)) \: < \: v \! \left(  1 - \frac{14.77}{L(\ss)^2}   \right) .
\]
Here, the second inequality uses the fact that $v(z)$ is strictly decreasing (recall \refdef{HKFunctions}).

In a similar fashion, the inequality $f^{-1}(y) \geq \fpl^{-1}(y)$ in \reflem{Simpler_f} gives
\[
\min \left\{ 1.031 -  \frac{17.85}{L(\ss)^2} , \:  0.9 \right\} \: < \: \min \left\{ 1.031 -0.452  \left(  \frac{2\pi}{L(\ss)} \right)^{\! 2} \! , \:  0.9 \right\}
\: \leq \: f^{-1} \left( \Big( \frac{2\pi}{L(\ss)} \Big)^{\! 2} \right)  \: = \:\zhat(\ss).
\]
 Consequently,  \refthm{HKVolBound} gives
\[
\len(\Sigma) \: \leq \: \frac{\haze(\zhat)}{2\pi}
\: \leq \: \frac{ \haze \Big( \! \min \big\{ 1.031 -  \tfrac{17.85}{L(\ss)^2} , \:  0.9 \big\} \Big) }{2\pi}.
\]
Here, the first inequality uses \refthm{HKVolBound}. The second inequality uses the lower bound on $\zhat$ in previous displayed equation, combined with the fact that $\haze$ is decreasing. This proves half of the upper bound in \refeqn{Lnew}.

To complete the proof, it remains to show $\len(\Sigma) \leq \frac{2\pi}{L(\ss)^2 - 14.41}$. We consider two cases. When $9.93 \leq L(\ss) \leq 11.7$, we claim that
\begin{equation}\label{Eqn:LnewPart}
\len(\Sigma) \: \leq \: \frac{ \haze \Big( \! \min \big\{ 1.031 -  \tfrac{17.85}{L(\ss)^2} , \:  0.9 \big\} \Big) }{2\pi} \: < \:  \frac{2\pi}{L(\ss)^2 - 14.41}. 
\end{equation}
Here, the first inequality is the previous displayed equation. The second inequality is verified using interval arithmetic in Sage; see the ancillary files \cite{FPS:Ancillary} for full details.
Meanwhile, when $L = L(\ss) \geq 11.7$, we have $L^2 \geq 136.89$. Under this hypothesis,
the estimate $\len(\Sigma) \leq \frac{2\pi}{L(\ss)^2 - 14.41}$ is a consequence of \cite[Lemma 6.10]{FPS:EffectiveBilipschitz}; see the two displayed equations in the proof of \cite[Lemma 8.4]{FPS:EffectiveBilipschitz}.
\end{proof}

\begin{remark}\label{Rem:SecantLines}
The idea of using secant-line approximations to simplify the statement of \refthm{HKVolBound} is not new. It was previously used by Atkinson and Futer \cite{AtkinsonFuter:HighTorsion} to compare the volumes of different Dehn fillings of a cusped manifold, and by Haraway \cite{Haraway:ParentalTest} to develop a Dehn parental test.

The proof of the volume estimate in \refthm{HKSimplerBound} also applies to give the following simple length bound: 
\begin{equation}\label{Eqn:SimpleLenBound}
\len(\Sigma) \leq \frac{1}{2\pi} \haze  \left(  1 - \frac{14.77}{L(\ss)^2}   \right) .
\end{equation}
 While the length bound in \refeqn{Lnew} is more complicated to state than the one in \refeqn{SimpleLenBound}, it is somewhat sharper, while remaining straightforward to evaluate by computer for any given $L$. We will use this sharper estimate in the proof of \refthm{CosmeticImproved}.
\end{remark}

\subsection{Improved bound on cosmetic slopes}\label{Sec:ImprovedCosmetic}

Next, we prove strengthened versions of two results from \cite{FPS:EffectiveBilipschitz}. In \refthm{CosmeticImproved}, we give an effective upper bound on the normalized  length of a tuple of slopes involved in a cosmetic surgery. In \refthm{FinitenessOfPairs}, we restrict attention to one-cusped manifolds and prove an effective upper bound on the length of both slopes in a cosmetic pair.

\begin{theorem}\label{Thm:CosmeticImproved}
Let $N$ be a cusped hyperbolic 3-manifold. Suppose that $\ss, \ss'$ are distinct tuples of slopes on the cusps of $N$, whose normalized lengths  satisfy
\[
L(\ss), L(\ss') \geq \max \left \{ 9.97, \, \sqrt{ \frac{2\pi}{\systole(N)}  + 56 } \right \}.
\]
Then any homeomorphism $\varphi \from N(\ss) \to N(\ss')$ restricts (after an isotopy) to a self-homeomorphism  of $N$ sending $\ss$ to $\ss'$. 
In particular, if $\systole(N) \geq 0.145$, then the above conclusions hold for all pairs $(\ss, \ss')$ of normalized length at least $9.97$.
\end{theorem}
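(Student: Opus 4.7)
The plan is to follow the same template as the proof of \cite[Theorem 7.29]{FPS:EffectiveBilipschitz}, substituting the sharper upper bound on $\len(\Sigma)$ given by \refthm{HKSimplerBound} for the older estimate used there. In particular, the $\frac{2\pi}{L^2 - 14.41}$ branch of the length bound $\lnew(L)$ from \refeqn{Lnew} is the source of the improved constants in the hypothesis.

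First, I would apply \refthm{HKSimplerBound} to both $\ss$ and $\ss'$. Since $L(\ss), L(\ss') \geq 9.97 > 9.93$, both fillings $M = N(\ss)$ and $M' = N(\ss')$ are hyperbolic, and the cores of the filling solid tori form geodesic links $\Sigma \subset M$ and $\Sigma' \subset M'$ with $\len(\Sigma) \leq \lnew(L(\ss))$ and $\len(\Sigma') \leq \lnew(L(\ss'))$. Using the second branch of $\lnew$ together with the hypothesis $L(\ss)^2 \geq 2\pi/\systole(N) + 56$, a direct calculation yields
\[
\len(\Sigma) \leq \frac{2\pi}{L(\ss)^2 - 14.41} \leq \frac{2\pi\,\systole(N)}{2\pi + 41.59\,\systole(N)} < \systole(N),
\]
and likewise $\len(\Sigma') < \systole(N)$.

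The key step is to identify $\Sigma$ as the collection of \emph{all} closed geodesics in $M$ of length less than $\systole(N)$, and similarly for $\Sigma' \subset M'$. For this I would invoke the effective bi-Lipschitz comparison between a long filling and its parent from \cite{FPS:EffectiveBilipschitz}: outside a small tubular neighborhood of $\Sigma$, the geometry of $M$ is uniformly close to that of $N$, so any closed geodesic in $M$ that is not a component of $\Sigma$ persists as a closed geodesic of $N$ (with only a small multiplicative length change) and hence has length bounded below by a quantity close to $\systole(N)$. Combining this lower bound with the strict upper bound on $\len(\Sigma)$ established above pins down $\Sigma$ and $\Sigma'$ as the unique short-geodesic links in their respective manifolds. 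By Mostow rigidity, $\varphi$ is isotopic to an isometry, which must therefore carry $\Sigma$ onto $\Sigma'$ and send each meridian of a filling solid torus on the $\ss$ side to the corresponding meridian on the $\ss'$ side. Drilling $\Sigma$ and $\Sigma'$ out then yields the desired restriction $\varphi \from N \to N$ sending $\ss$ to $\ss'$.

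The hard part will be the careful numerical bookkeeping required to verify that the constants in the effective bi-Lipschitz statement of \cite{FPS:EffectiveBilipschitz}, once combined with the sharper length bound in \refeqn{Lnew}, actually produce the explicit constants $9.97$ and $56$ appearing in \refeqn{ImprovedHypoth}; this is where even minor improvements in the input estimates translate into the claimed strengthening over \cite[Theorem 7.29]{FPS:EffectiveBilipschitz}. The supplementary claim about $\systole(N) \geq 0.145$ is then the arithmetic observation that $\sqrt{2\pi/0.145 + 56} \approx 9.966 < 9.97$, so the constant $9.97$ dominates the maximum in \refeqn{ImprovedHypoth} as soon as $\systole(N) \geq 0.145$.
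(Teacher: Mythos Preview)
Your approach is essentially the same as the paper's: apply the improved length bound $\lnew(L)$ from \refthm{HKSimplerBound} to both fillings, invoke the effective control from \cite{FPS:EffectiveBilipschitz} to identify the cores as the unique shortest geodesics, and then use rigidity. The paper packages your ``hard part'' (the numerical bookkeeping) into \reflem{SysminNewProps}, which proves the precise inequality $\sysmin_{\textup{new}}(L) < \frac{2\pi}{L^2-56}$; here $\sysmin_{\textup{new}}(L)$ is $\lnew(L)$ multiplied by exactly the bi-Lipschitz distortion factor you allude to. That lemma feeds into \refthm{UniqueShortest}, which is the paper's formal version of your ``key step'', and the proof of \refthm{CosmeticImproved} is then a one-line corollary. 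Note that your direct inequality $\len(\Sigma) \leq \frac{2\pi}{L^2-14.41} < \systole(N)$ is not quite enough on its own: one needs $\len(\Sigma)$ times the distortion factor to be at most $\systole(N)$, and the gap between $14.41$ and $56$ is precisely what absorbs that factor.

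One small omission: Mostow rigidity alone only gives that $\varphi$ is \emph{homotopic} to an isometry. To conclude that $\varphi$ can be \emph{isotoped} so as to restrict to a self-homeomorphism of $N$, the paper explicitly invokes Gabai's topological rigidity theorem \cite{Gabai:TopologicalRigidity}.
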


\refthm{CosmeticImproved} should be compared to \cite[Theorem~7.29]{FPS:EffectiveBilipschitz}. The statement of that previous result is nearly the same, except that the lower bound on length is $ \max \left \{ 10.1, \, \sqrt{ \frac{2\pi}{\systole(N)}  + 58 } \right \}$.
Thus, in \refthm{CosmeticImproved}, both the constant term and the variable term have become slightly smaller. In particular, when $\systole(N) \geq 0.145$, ruling out cosmetic surgeries on $N$ involves checking all slopes shorter than $9.97$ rather than $10.1$. The $\approx 1.3\%$ reduction in the normalized length of a slope leads to about a $\approx 2.6\%$ reduction in the number of slopes to check. Since the procedure of \refsec{CosmeticProcedure} compares each slope shorter than $9.97$ to some number of alternate slopes that may yield a manifold of the same volume, this results in a $\approx 4\%$ reduction in the number of \emph{pairs} of slopes to check. Accordingly, this strengthening is useful in applications.

The proof of \refthm{CosmeticImproved} closely parallels the proof of \cite[Theorem~7.29]{FPS:EffectiveBilipschitz}. We encourage the reader to go through the arguments of \cite[Section 7.3]{FPS:EffectiveBilipschitz}. While the arguments in that subsection rely on the cone deformation results developed earlier in \cite{FPS:EffectiveBilipschitz}, those results are only used as a black box.
Because the argument is so similar, we will mainly point out the few places that have to be modified. 

We begin with the following definition.

\begin{definition}\label{Def:SystoleL}
For $z \in (0,1)$ and $\ell \in (0, 0.5085)$, define a function
\begin{equation*}
F(z, \ell) =  \frac{(1+ z^2)}{  z^3 (3-z^2)} \cdot \frac{\ell}{10.667 - 20.977 \ell} \, .
\end{equation*}
Note that $F$ is positive everywhere on its domain, decreasing in $z$, and increasing in $\ell$.
Now, for any $L \geq 9.97$, define
\[
\sysmin_{\textup{new}}(L) = \lnew(L) \exp( 4\pi^2 F ( \haze^{-1} ( 4 \pi \, \lnew(L)  + 2\pi \, \ERROR), \, \lnew(L) ) ) ,
\]
where $\haze^{-1}$ is defined in \refdef{Haze} and $\lnew(L)$ is the function defined in \refeqn{Lnew}.
\end{definition}

\begin{lemma}\label{Lem:SysminNewProps}
The function $\sysmin_{\textup{new}}(L)$ is decreasing in $L$. Furthermore, for $L \geq 9.97$, we have
\[
\sysmin_{\textup{new}}(L) < \frac{2\pi}{L^2 - 56} .
\]
\end{lemma}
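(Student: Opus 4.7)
The plan is to handle the two assertions separately. Monotonicity follows structurally from the composition of monotone building blocks, while the sharp inequality $\sysmin_{\textup{new}}(L) < \frac{2\pi}{L^2 - 56}$ is delicate near $L = 9.97$ and requires numerical verification.

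For monotonicity, I would first argue that $\lnew(L)$ is decreasing in $L$. The term $\frac{2\pi}{L^2 - 14.41}$ is plainly decreasing. The term $\frac{1}{2\pi}\haze\bigl(\min(1.031 - 17.85/L^2,\,0.9)\bigr)$ is also decreasing, because $\min(1.031 - 17.85/L^2,\,0.9)$ is weakly increasing in $L$ while $\haze$ is strictly decreasing on $[z_0, 1]$ by Definition~\ref{Def:Haze}. The minimum of two decreasing positive functions is decreasing, so $\lnew(L)$ is decreasing. Next, since $\haze^{-1}$ is decreasing on its domain and $4\pi \lnew(L) + 2\pi \ERROR$ is decreasing in $L$, the composite $\haze^{-1}(4\pi \lnew(L) + 2\pi \ERROR)$ is increasing in $L$. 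Inspecting $F(z,\ell)$ in Definition~\ref{Def:SystoleL}, the first factor $(1+z^2)/(z^3(3-z^2))$ is decreasing in $z$ on $(0,1)$ (a short derivative calculation yields the negative factor $z^2(3z^4 + 2z^2 - 9)$ in the numerator for $z^2 < \tfrac{-1+\sqrt{28}}{3}$), while the second factor $\ell/(10.667-20.977\ell)$ is increasing in $\ell$. Hence as $L$ grows, the first input of $F$ increases and the second decreases, both effects lowering $F$; so $\exp(4\pi^2 F)$ is decreasing, and $\sysmin_{\textup{new}}(L)$ is a product of two positive decreasing functions, hence decreasing.

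For the inequality, I would use the elementary bound $\lnew(L) \leq \frac{2\pi}{L^2 - 14.41}$, coming from one of the two terms in the minimum in \eqref{Eqn:Lnew}, to reduce the claim to
\[
\exp\bigl(4\pi^2 F(\haze^{-1}(4\pi\lnew(L) + 2\pi\ERROR),\, \lnew(L))\bigr) \;\leq\; \frac{L^2 - 14.41}{L^2 - 56}.
\]
Both sides approach $1$ as $L \to \infty$. Expanding asymptotically, $4\pi^2 F \sim 8\pi^3/(10.667\, L^2) \approx 23.25/L^2$, while $\log\bigl((L^2-14.41)/(L^2-56)\bigr) \sim 41.59/L^2$, so far from the boundary there is a comfortable logarithmic gap of nearly a factor of two.

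The main obstacle is the region near $L = 9.97$, where direct evaluation shows both sides agree to about four significant figures, with an absolute margin on the order of $10^{-5}$. To handle this region, I would appeal to certified interval arithmetic in Sage, subdividing $[9.97, L_0]$ adaptively until each subinterval verifies the inequality, and closing out with the asymptotic estimate on $[L_0, \infty)$ for a suitable explicit $L_0$. The interval calculation would be bundled into the ancillary files \cite{FPS:Ancillary}, in the same style as the verification of \eqref{Eqn:LnewPart} in the proof of \refthm{HKSimplerBound}.
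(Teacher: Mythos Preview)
Your monotonicity argument is correct and matches the paper's. For the inequality, the overall plan---interval arithmetic near the endpoint, analytic control for large $L$---is also the paper's, but your reduction has a genuine problem: the sufficient condition
\[
\exp\bigl(4\pi^2 F(\haze^{-1}(4\pi\lnew + 2\pi\ERROR),\,\lnew)\bigr) \;\leq\; \frac{L^2-14.41}{L^2-56}
\]
is \emph{false} near $L=9.97$. At $L=9.97$ the $\haze$-branch of \eqref{Eqn:Lnew} is active, giving $\lnew \approx 0.07339$, roughly $0.7\%$ below $\frac{2\pi}{L^2-14.41} \approx 0.07393$; this slack is precisely what lets the original inequality hold with the tiny margin you observed, and your reduction discards it. Direct evaluation gives $\exp(4\pi^2 F) \approx 1.97 > 1.96 \approx \frac{L^2-14.41}{L^2-56}$, so interval arithmetic applied to the reduced form on $[9.97, L_0]$ cannot succeed.

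The paper's remedy is to verify the \emph{original} inequality $\sysmin_{\textup{new}}(L) < \frac{2\pi}{L^2-56}$ directly by interval arithmetic on $[9.97, 12]$. For $L \geq 12$ one checks that $\lnew(L) = \frac{2\pi}{L^2 - 14.41}$ exactly (the other branch of the minimum is now the larger one), so your reduction becomes an equivalence there. Rather than tracking asymptotic error terms, the paper then rewrites the inequality as
\[
4\pi^2\, \frac{F(\haze^{-1}(4\pi\lnew + 2\pi\ERROR),\,\lnew)}{\lnew} \;<\; \log\!\Bigl(\tfrac{L^2-14.41}{L^2-56}\Bigr)\cdot\tfrac{L^2-14.41}{2\pi},
\]
observes that both sides are decreasing in $L$, and compares the left side's value at $L=12$ (about $5.6$) with the right side's limit $\frac{56-14.41}{2\pi} > 6.6$ as $L \to \infty$.
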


\begin{proof}
Recall from \refdef{Haze} and \cite[Remark 4.23]{FPS:EffectiveBilipschitz} that $\haze$ and $\haze^{-1}$ are both decreasing functions. It follows that $\lnew = \lnew(L)$ is a decreasing function of $L$, because  $\haze \left( 1.031 -   \tfrac{17.85}{L^2}  \right)$ and $\frac{2\pi}{L(\ss)^2 - 14.41}$ are both decreasing functions of $L$.
By a derivative computation,  the function
\[
\frac{F(z,\lnew)}{\lnew} = \frac{(1+ z^2)}{  z^3 (3-z^2)} \cdot \frac{1}{10.667 - 20.977 \lnew}
\]
 is decreasing in $z$ and increasing in $\lnew$. Thus
 $F ( \haze^{-1} ( 4 \pi \lnew +2\pi\ERROR), \, \lnew )$ is increasing in $\lnew$. 
 Since $\lnew = \lnew(L)$ is decreasing in $L$, we conclude that $\sysmin(L)$ is also decreasing.
 
 Next, we prove the inequality $\sysmin_{\textup{new}}(L) < \frac{2\pi}{L^2 - 56} $. For $L \in [9.97, 12]$, we verify this inequality using interval arithmetic in Sage \cite{FPS:Ancillary}. 
 For $L \geq 12$, direct computation shows that 
\[
\frac{ \haze \left( 1.031 -   \tfrac{17.85}{L^2}  \right) }{2\pi} <  \frac{\haze (0.9)}{2\pi} 
\qquad \text{and} \qquad
\frac{2\pi}{L^2 - 14.1} <  \frac{\haze (0.9)}{2\pi} ,
\]
hence we actually have $\lnew(L) = \frac{2\pi}{L^2 - 14.41}$ in \refdef{SystoleL}.

From here, the proof is extremely similar to the proof of \cite[Lemma 7.26]{FPS:EffectiveBilipschitz}. As in the proof of that proposition, we substitute $\lnew(L) = \frac{2\pi}{L^2 - 14.41}$ and apply some algebraic manipulation to show that the desired inequality $\sysmin_{\textup{new}}(L) < \frac{2\pi}{L^2 - 56} $ is equivalent to
\begin{equation}\label{Eqn:SUpper}
4\pi^2 \, \frac{F ( \haze^{-1} ( 4 \pi \lnew + 2\pi \ERROR), \, \lnew ) }{\lnew}
\: < \: \log \left ( \frac{L^2 - 14.41}{L^2 - 56}  \right)   \cdot \frac{L^2 - 14.41}{2\pi} \, . 
\end{equation}
Compare \cite[Equation (7.27)]{FPS:EffectiveBilipschitz}. We have already checked that the left-hand side of \refeqn{SUpper} is decreasing in $L$; meanwhile, the right-hand side is decreasing by a derivative calculation. As in the proof of \cite[Lemma 7.26]{FPS:EffectiveBilipschitz}, we show that as $L \to \infty$, the right-hand side limits to $\frac{56 - 14.41}{2\pi} = 6.619 \ldots$. Thus the right-hand side is greater than $6.6$ for all $L$. On the other hand, the left-hand side of \refeqn{SUpper} equals $5.614 \ldots$ when $L = 12$, hence is lower than $5.7$ for all $L \geq 12$.
\end{proof}

\begin{theorem}\label{Thm:UniqueShortest}
For a real number $L_0 \geq 9.97$, let $\sysmin_{\textup{new}}(L_0)$ be the function of \refdef{SystoleL}.
Let $N$ be a cusped hyperbolic 3-manifold whose systole is at least $\sysmin(L_0)$. Let $\ss$ be a tuple of surgery slopes on the cusps of $N$, whose normalized length is $L = L(\ss) \geq L_0$. 

Then the Dehn filled manifold $M = N(\ss)$ is hyperbolic. The core $\Sigma$ of the Dehn filling solid tori is isotopic to a geodesic link with an embedded tubular neighborhood of radius greater than $1$.
Finally, the only geodesics in $M$ of length at most $\ell = \len(\Sigma)$ are the components of $\Sigma$ itself.
\end{theorem}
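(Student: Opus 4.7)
The statement is essentially an improved analogue of a result from \cite{FPS:EffectiveBilipschitz}, with the sharper length bound $\lnew(L)$ of \refeqn{Lnew} replacing its weaker predecessor. The plan is to combine \refthm{HKSimplerBound} with the cone deformation machinery of \cite{FPS:EffectiveBilipschitz}, then verify that the improved constants propagate correctly.

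First, I would apply \refthm{HKSimplerBound} directly to the hypothesis $L = L(\ss) \geq L_0 \geq 9.97$. This immediately yields that $M = N(\ss)$ is hyperbolic, that the surgery cores form a geodesic link $\Sigma \subset M$, and that $\ell = \len(\Sigma) \leq \lnew(L)$. This disposes of the first two assertions, modulo the tube radius.

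Second, I would invoke the one-parameter family of hyperbolic cone structures interpolating between $M$ (cone angle $2\pi$ along $\Sigma$) and $N$ (cone angle $0$, i.e.\ cusps). The tube radius estimate of Hodgson--Kerckhoff, as packaged in \cite[Section~4]{FPS:EffectiveBilipschitz}, gives $\tanh R \geq \haze^{-1}(4\pi\ell + 2\pi\ERROR)$; the small buffer $\ERROR = 10^{-5}$ is the reason this term appears inside the definition of $\sysmin_{\textup{new}}$. Combining this with $\ell \leq \lnew(L)$ and monotonicity of $\haze^{-1}$, one checks by direct numerical verification (as in the proof of \reflem{SysminNewProps}) that the resulting value of $\tanh R$ exceeds $\tanh(1)$ for every $L \geq 9.97$, so $R > 1$.

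Third, to rule out other short geodesics, suppose for contradiction that $\gamma \subset M$ is a closed geodesic distinct from every component of $\Sigma$, with $\len_M(\gamma) \leq \ell$. Since the tube of radius $>1$ around $\Sigma$ is embedded and disjoint from $\gamma$, the curve $\gamma$ remains a geodesic throughout the cone deformation from $M$ back to $N$. The bilipschitz control on this deformation in \cite[Section~6--7]{FPS:EffectiveBilipschitz} shows that lengths of such persistent geodesics change by at most a factor $\exp\!\big(4\pi^2\, F(\haze^{-1}(4\pi\ell + 2\pi\ERROR),\, \ell)\big)$. Using monotonicity of $F$ in both arguments and the bound $\ell \leq \lnew(L)$, I would then estimate
\[
\len_N(\gamma) \;\leq\; \ell \cdot \exp\!\big(4\pi^2\, F(\haze^{-1}(4\pi\lnew(L) + 2\pi\ERROR),\, \lnew(L))\big) \;=\; \sysmin_{\textup{new}}(L) \;\leq\; \sysmin_{\textup{new}}(L_0) \;\leq\; \systole(N),
\]
where the middle inequality uses the monotonicity established in \reflem{SysminNewProps} and the last uses the hypothesis. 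The $\ERROR$ term produces a strict inequality, so $\gamma$ is a closed geodesic of $N$ strictly shorter than $\systole(N)$, a contradiction.

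The main obstacle is the second and third steps: one must carefully track which tube-radius and bilipschitz estimates from \cite{FPS:EffectiveBilipschitz} are invoked, verify that the deformation parameter stays inside the allowed domain (so that the $\haze^{-1}$ evaluation is legitimate and $F$ stays finite), and confirm by interval arithmetic that the new $\lnew(L)$ is small enough to yield $R > 1$ for every $L \geq 9.97$, not merely for $L \geq 10.1$ as in the earlier version. Everything else is a direct transcription of the earlier proof with the new constants in place.
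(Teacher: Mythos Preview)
Your proposal is correct and follows essentially the same route as the paper: the paper's proof simply says to rerun the argument of \cite[Theorem~7.28]{FPS:EffectiveBilipschitz} verbatim with $\lnew(L)$ substituted for the older $\ell_{\max}(L)$, and your three steps are precisely an unpacking of what that argument does. The one item the paper singles out that you should make explicit is the numerical check $\len(\Sigma) \leq 0.0735$ (needed as a hypothesis for \cite[Theorem~7.19]{FPS:EffectiveBilipschitz}), which follows from \refthm{HKSimplerBound} at $L \geq 9.97$ and is what legitimizes the bilipschitz estimate in your third step.
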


\begin{proof}
This follows exactly the same argument as the proof of \cite[Theorem 7.28]{FPS:EffectiveBilipschitz}. That proof uses a function
\[
\ell_{\max}(L) = \frac{2 \pi}{L^2 - 16.03},
\]
and a function $\sysmin(L)$ that is defined exactly as in \refdef{SystoleL}, except using $\ell_{\max}(L)$ in place of $\lnew(L)$.
The salient facts in the proof are:
\begin{itemize}
\item In the filled manifold $M = N(\ss)$, the total length of $\Sigma$ satisfies $\len(\Sigma) \leq \ell_{\max}(L)$. By \refthm{HKSimplerBound}, we also have $\len(\Sigma) \leq \lnew(L)$.
\item In order to apply \cite[Theorem 7.19]{FPS:EffectiveBilipschitz} in the proof, one needs to check that $\len(\Sigma) \leq  0.0735$. Since $L \geq L_0 \geq 9.97$, \refthm{HKSimplerBound} gives the estimate $\len(\Sigma) \leq 0.07339$, which suffices.
\item The function $\sysmin(L)$ is decreasing in $L$. Similarly, $\sysmin_{\textup{new}}(L)$ is decreasing in $L$ by \reflem{SysminNewProps}.
\end{itemize}
The upshot is that once we substitute $\lnew(L)$ in place of $\ell_{\max}(L)$, the same three bullets are satisfied, and the rest of the argument goes through verbatim.
\end{proof}

\begin{proof}[Proof of \refthm{CosmeticImproved}]
This is a corollary of \refthm{UniqueShortest}, combined with Mostow rigidity and Gabai's topological rigidity theorem  \cite{Gabai:TopologicalRigidity}. See the proof of \cite[Theorem 7.29]{FPS:EffectiveBilipschitz} for full details.
\end{proof}

We also prove the following strengthened version of \cite[Corollary 1.13]{FPS:EffectiveBilipschitz}:

\begin{theorem}\label{Thm:FinitenessOfPairs}
Let $N$ be a one-cusped hyperbolic manifold. Suppose that slopes $s, s'$ are a cosmetic surgery pair for $N$. If the pair is chirally cosmetic, assume furthermore that there does not exist a symmetry $\varphi$ of $N$ with $\varphi(s) = s'$. Then, up to reordering $s$ and $s'$, the following hold: 
\begin{enumerate}[ \: \: $(1)$ ]
\item\label{Itm:S1short} $ L(s) < \max \left \{ 9.97, \, \sqrt{ \frac{2\pi}{\systole(N)}  + 56 } \right \} $.
\item\label{Itm:S2short} Either $L(s') < 9.97$ \: or \: $v \! \left( 1- \frac{14.77}{L(s')^2} \right) > \vol(N) - \vol(N(s))$.
\end{enumerate}
Here, $v(z)$ is the decreasing function of \refdef{HKFunctions}, and we are using the convention that the volume of a non-hyperbolic manifold is $0$.
\end{theorem}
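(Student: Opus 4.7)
The plan is to derive (1) by contradiction from \refthm{CosmeticImproved}, and then to deduce (2) by combining the volume estimate of \refthm{HKSimplerBound} with the fact that homeomorphic manifolds have the same volume.

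For (1), set $B := \max\{9.97, \sqrt{2\pi/\systole(N)+56}\}$ and suppose for contradiction that both $L(s) \geq B$ and $L(t) \geq B$. Then \refthm{CosmeticImproved} promotes the homeomorphism $\varphi \from N(s) \to N(t)$ to a self-homeomorphism $\psi$ of $N$ with $\psi(s) = t$ of the same orientation type as $\varphi$. In the chirally cosmetic setting $\psi$ is orientation-reversing, contradicting the hypothesis that no symmetry of $N$ sends $s$ to $t$. In the purely cosmetic setting $\psi$ is orientation-preserving; as noted in the introduction just after \refconj{CosmeticMultiCusp}, the uniqueness of $\lamhom$ together with Mostow rigidity (which forces $\psi$ to act with finite order on $H_1(\bdy N;\ZZ)$) implies that the induced action on the cusp torus is $\pm I$, hence trivial on unoriented slopes. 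Thus $s = \psi(s) = t$, contradicting that $(s,t)$ is a nontrivial cosmetic pair. Relabelling $s$ and $t$ if necessary yields $L(s) < B$.

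For (2), assume (1), so $L(s) < B$. If $L(t) < 9.97$, the first alternative holds and there is nothing to prove, so assume $L(t) \geq 9.97 > 9.93$. Then \refthm{HKSimplerBound} applies to the tuple $\ss' = t$ and certifies both that $N(t)$ is hyperbolic and that
\[
\vol(N) - \vol(N(t)) \;<\; v\!\left(1 - \tfrac{14.77}{L(t)^2}\right).
\]
Since $(s,t)$ is a cosmetic pair, $N(s) \cong N(t)$, so $N(s)$ is likewise hyperbolic and $\vol(N(s)) = \vol(N(t))$. Substituting this equality into the displayed inequality produces exactly $v(1 - 14.77/L(t)^2) > \vol(N) - \vol(N(s))$, the second alternative of (2). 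The convention that $\vol = 0$ for non-hyperbolic manifolds is consistent with the argument: if $N(s)$ were non-hyperbolic then so would $N(t)$, but \refthm{HKSimplerBound} would then force $L(t) < 9.93 < 9.97$, placing us in the first alternative.

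The main obstacle is justifying the slope-rigidity step in (1): confirming that an orientation-preserving self-homeomorphism of $N$ produced by \refthm{CosmeticImproved} really does act trivially on slopes. This uses the combination of uniqueness of $\lamhom$ with finiteness of $\Isom(N)$ to reduce the action on $H_1(\bdy N;\ZZ)$ to $\pm I$; after that, the remainder of the argument is a direct application of the two cited theorems and the equality of volumes across a cosmetic pair.
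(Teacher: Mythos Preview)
Your argument is correct and matches the paper's proof in structure: part (1) comes from the contrapositive of \refthm{CosmeticImproved}, and part (2) from \refthm{HKSimplerBound} together with $\vol(N(s))=\vol(N(t))$. The only difference is that where the paper invokes the remark preceding the proof (that an orientation-preserving self-homeomorphism of $N$ fixes every slope), you spell out the reason via Mostow rigidity and the resulting finite-order action on $H_1(\bdy N;\ZZ)$; this is a welcome clarification, since merely preserving $\lamhom$ does not by itself rule out a parabolic action on the cusp torus without the finite-order input.
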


We remark that an orientation--preserving symmetry $\varphi \from N \to N$ must preserve the homological longitude of $N$ (compare \refdef{HomologicalLongitude}), hence cannot send a slope $s$ to a distinct slope $s'$. This is why the theorem statement is only concerned with symmetries of $N$ in the chirally cosmetic setting.

\begin{proof}[Proof of \refthm{FinitenessOfPairs}]
Suppose without loss of generality that $L(s) \leq L(s')$. Since $s$ and $s'$ form a cosmetic surgery pair, and there does not exist a symmetry of $N$ with $\varphi(s) = s'$, \refthm{CosmeticImproved} says the shorter slope $s$ must satisfy \refitm{S1short}.

Now, consider slope $s'$. If $L(s') < 9.97$, then conclusion \refitm{S2short} holds. Alternately, if $L(s') \geq 9.97$, then \refthm{HKSimplerBound} implies $N(s')$ is hyperbolic, hence $N(s)$ is hyperbolic as well. \refthm{HKSimplerBound} also implies that
\[
\vol(N) - \vol(N(s)) \: = \: \vol(N) - \vol(N(s')) \: < \: v \! \left(  1 - \frac{14.77}{L(s')^2}   \right) ,
\]
hence conclusion \refitm{S2short} holds. 
\end{proof}

\refthm{FinitenessOfPairs} says  a cosmetic surgery pair on a one-cusped manifold must come from an explicit, finite list. Indeed, there are only finitely many slopes $s$ satisfying \refitm{S1short}. There are also finitely many slopes $s'$ satisfying $L(s') < 9.97$. For each $s$, a theorem of Gromov and Thurston \cite[Theorem 6.5.6]{thurston:notes} says that $\vol(N) - \vol(N(s)) > 0$. Thus $v \big( 1- \frac{14.77}{L(s')^2} \big)$ is bounded below by a strictly positive quantity. Since $v(z)$ decreases toward $0$ as $z \to 1$ (compare
 \refdef{HKFunctions}), it follows that $\frac{14.77}{L(s')^2}$ is bounded below, hence $L(s')$ is bounded above. Thus only finitely many slopes $s'$ can satisfy \refitm{S2short}.

If $N$ is a multi-cusped hyperbolic manifold, then the statement of \refthm{FinitenessOfPairs} still holds with slopes $s,s'$ replaced by tuples of slopes $\ss,\ss'$, as in \refthm{CosmeticImproved}. However, there can be infinitely many tuples satisfying the given upper bounds on $L(\ss)$ and $L(\ss')$.

We also have a version of \refthm{FinitenessOfPairs} for Dehn fillings of distinct manifolds.

\begin{theorem}\label{Thm:CommonFillingsPairs}
Let $N_1$,$N_2$ be one-cusped hyperbolic manifolds. Suppose that $s_1, s_2$ are slopes on $\bdy N_1, \bdy N_2$ respectively such that $N_1(s_1)$ is homeomorphic to $N_2(s_2)$. Suppose further that there is no homeomorphism of pairs $(N_1,s_1) \to (N_2, s_2)$. 
Then the following holds for the set $\{i,j\} = \{1,2\}$:
\begin{enumerate}[ \: \: $(1)$ ]
\item\label{Itm:S1shortCommon} $ L(s_i) < \max \left \{ 9.97, \, \sqrt{ \frac{2\pi}{\systole(N_i)}  + 56 } \right \} $.
\item\label{Itm:S2shortCommon} Either $L(s_j) < 9.97$ \: or \: $v \! \left( 1- \frac{14.77}{L(s_j)^2} \right) > \vol(N_j) - \vol(N_i(s_i))$.
\end{enumerate}
Here, $v(z)$ is the decreasing function of \refdef{HKFunctions}, and we are using the convention that the volume of a non-hyperbolic manifold is $0$.
\end{theorem}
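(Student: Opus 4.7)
The plan is to closely follow the proof of \refthm{FinitenessOfPairs}, adapting it to the setting of two distinct parent manifolds. The conclusion has two essentially independent parts: a normalized-length bound on the ``shorter'' slope, \refitm{S1shortCommon}, and a Hodgson--Kerckhoff volume bound on the other slope, \refitm{S2shortCommon}.

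For \refitm{S1shortCommon}, I will argue by contradiction. Assume that for both $k = 1,2$,
\[ L(s_k) \geq \max \left\{ 9.97, \, \sqrt{ \frac{2\pi}{\systole(N_k)} + 56 } \right\}. \]
Then \refthm{UniqueShortest} applies to each pair $(N_k, s_k)$, showing that $M_k := N_k(s_k)$ is hyperbolic and that the core $\Sigma_k \subset M_k$ of the filling solid torus is the unique closed geodesic of length at most $\len(\Sigma_k)$. Let $\varphi \from M_1 \to M_2$ be the given homeomorphism. By Mostow rigidity, $\varphi$ is homotopic to an isometry $\psi \from M_1 \to M_2$, and by Gabai's topological rigidity~\cite{Gabai:TopologicalRigidity} $\psi$ is isotopic to $\varphi$. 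Since $\psi$ preserves geodesic lengths, it must send the unique shortest geodesic of $M_1$ to that of $M_2$; hence $\psi(\Sigma_1) = \Sigma_2$ (forcing $\len(\Sigma_1) = \len(\Sigma_2)$). Drilling the cores, $\psi$ restricts to a homeomorphism $N_1 \to N_2$ carrying the meridian slope $s_1$ to $s_2$, a homeomorphism of pairs contradicting the hypothesis. Thus some slope $s_i$ must satisfy \refitm{S1shortCommon}.

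For \refitm{S2shortCommon}, let $j$ be the remaining index. If $L(s_j) < 9.97$, we are done. Otherwise, I apply \refthm{HKSimplerBound} to $(N_j, s_j)$, which guarantees that $N_j(s_j)$ is hyperbolic (so $N_i(s_i) \cong N_j(s_j)$ is hyperbolic as well) and that
\[
\vol(N_j) - \vol(N_j(s_j)) \: < \: v\!\left(1 - \frac{14.77}{L(s_j)^2}\right).
\]
Since $N_j(s_j) \cong N_i(s_i)$, the volumes agree: $\vol(N_j(s_j)) = \vol(N_i(s_i))$. Substituting gives \refitm{S2shortCommon}.

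The main (mild) technical point is the step in which we pass from the arbitrary homeomorphism $\varphi$ to an isometry $\psi$ that restricts to a pair homeomorphism $(N_1,s_1) \to (N_2,s_2)$. This is essentially the same maneuver used in the proof of \refthm{CosmeticImproved}, with one parent manifold replaced by two; once the uniqueness of the shortest geodesic provided by \refthm{UniqueShortest} is in hand, the drilling step is routine. No new geometric estimates beyond those of \refsec{Linearized} and \refsec{ImprovedCosmetic} are needed.
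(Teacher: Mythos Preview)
Your proof is correct and follows essentially the same approach as the paper's: argue by contradiction for \refitm{S1shortCommon} using \refthm{UniqueShortest} and Mostow rigidity to force a pair homeomorphism, then deduce \refitm{S2shortCommon} from \refthm{HKSimplerBound}. The only minor difference is that you invoke Gabai's topological rigidity to upgrade the homeomorphism $\varphi$ to the isometry $\psi$; this is harmless but unnecessary here, since the hypothesis only forbids \emph{any} homeomorphism of pairs, and the isometry $\psi$ already provides one after drilling the unique shortest geodesics.
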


\begin{proof}
Let $M$ be the manifold homeomorphic to both $N_1(s_1)$ and $N_2(s_2)$. 
Suppose that neither $s_1$ nor $s_2$ satisfy \refitm{S1shortCommon}. Then \refthm{UniqueShortest} says that $M$ is hyperbolic and both Dehn filling cores are isotopic to the unique shortest geodesic. Thus by Mostow rigidity,
there is an isometry taking the Dehn filling core of $N_1(s_1)$ to the Dehn filling core of $N_2(s_2)$, and taking meridian to meridian. This contradicts our assumption that there is no homeomorphism of pairs $(N_1,s_1) \to (N_2,s_2)$. Up to relabeling, we may assume that conclusion \refitm{S1shortCommon} holds for $i=1$.

Now, consider $s_j = s_2$. If $L(s_2) < 9.97$, then conclusion \refitm{S2shortCommon} holds. Alternately, if $L(s_2) \geq 9.97$, then \refthm{HKSimplerBound} implies $M \cong N_2(s_2)$ is hyperbolic. \refthm{HKSimplerBound} also implies that
\[
\vol(N_2) - \vol(M) \: < \: v \! \left(  1 - \frac{14.77}{L(s_2)^2}   \right) ,
\]
hence conclusion \refitm{S2shortCommon} holds for $j=2$. 
\end{proof}

\section{Procedure and results using hyperbolic tools}\label{Sec:CosmeticProcedure}

This section describes a procedure that tests Conjectures~\ref{Conj:Cosmetic} and~\ref{Conj:ChiralKnotCosmetic} on a given hyperbolic manifold. This procedure has been implemented in a Sage program~\cite{FPS:GitHubCosmetic}.
We use this program to prove \refthm{CosmeticCensus}, which verifies the cosmetic surgery conjecture, \refconj{Cosmetic}, on the SnapPy census up to 9 tetrahedra. We also use this procedure to prove \refthm{ChiralKnots}, which verifies the chirally cosmetic surgery conjecture, \refconj{ChiralKnotCosmetic}, on the set of hyperbolic knots up to 15 crossings.

\subsection{The procedure}\label{Sec:ProcedureDescription}

Here, we describe a computer program that tests a one-cusped hyperbolic manifold for cosmetic surgeries. As input, the program accepts a triangulation of a one-cusped hyperbolic 3-manifold $N$, or the SnapPy name of such a manifold. An optional flag specifies whether we wish to test for \emph{all} cosmetic surgeries or just purely cosmetic ones. As output, the program produces a list of pairs slopes that could not be rigorously distinguished.

The procedure implemented in our program is not an algorithm, in the sense that the program is not guaranteed to succeed. Nonetheless, any answer produced by the program is trustworthy. All SnapPy and Regina routines used in the program are rigorous, including rigorously verified intervals for hyperbolic invariants.

\subsubsection{Step 1. Gather basic information about $N$.}\label{Step:BasicInfo}

We begin by fixing a maximal horospherical neighborhood of the cusp, denoted $C$. Then, install a geometric framing $\langle \mu, \lambda \rangle$ for $H_1(C)$. That is, let $\mu$ be a shortest curve on $\bdy C$, and let $\lambda$ be a shortest curve among those that have intersection number $1$ with $\mu$.
Compute the cusp invariants: $\len(\mu)$, $\len(\lambda)$, and $\area(\bdy C)$. These computations can be rigorously certified.

Next, compute the homological longitude on $\bdy C$: that is, a primitive curve $\lamhom$  with $[\lamhom] = 0  \in H_1(N, \QQ)$. As mentioned in \refdef{HomologicalLongitude}, this curve is unique up to sign. 

Next, decide whether $N$ is amphicheiral. This computation can be rigorously certified using a positively oriented ideal triangulation. If $N$ has an orientation-reversing symmetry, then slopes $s$ and $-s$ (in the geometric framing) will always be chirally cosmetic. In this case, we test for purely cosmetic surgeries only.

Finally, if $H_1(N, \ZZ) \cong \ZZ$, apply \reftest{Casson} (Casson invariant) to $N$. In particular, if $H_1(N, \ZZ) \cong \ZZ$ and $\Delta''_N(1) \neq 0$, then $N$ cannot admit any purely cosmetic surgeries.

\smallskip

\subsubsection{Step 2. Find the exceptional fillings of $N$.}\label{Step:FindExcep}
By the $6$--theorem of Agol and Lackenby~\cite{agol:6theorem, lackenby:surgery}, any Dehn filling slope $s$ satisfying $\ell(s) > 6$ must produce a hyperbolic manifold $N(s)$. Consequently, all exceptional fillings must come from slopes with $\ell(s) \leq 6$. Using interval arithmetic, SnapPy can compute a set of slopes guaranteed to contain all those of length at most $6$.
For each such slope $s$, we attempt to verify that $N(s)$ is hyperbolic by finding a rigorously certified solution to the gluing equations. Failing that, we attempt to rigorously certify that $N(s)$ is exceptional (non-hyperbolic), in one of three ways:  by finding obstructions to hyperbolicity in the fundamental group; by finding an incompressible sphere or torus using normal surface theory; or by identifying $N(s)$ as a known Seifert fibered space using Regina. 

At the end of this step, the procedure produces a set $\calE = \calE(N)$ containing certified \emph{exceptional} slopes and a set $\calU = \calU(N)$ containing \emph{unidentified} slopes (whose hyperbolicity could not be settled). The set $\calU$ is reported but is not analyzed any further. However, in every instance where we ran our program, we have obtained $\calU = \emptyset$. This extends Dunfield's identification of $\calE(N)$ for every $N$ in the SnapPy census \cite[Theorem 1.2]{Dunfield:CensusFillings}, with the byproduct that $\calU(N) = \emptyset$. Indeed, our code incorporates and extends Dunfield's code.

\smallskip

\subsubsection{Step 3. Compare non-hyperbolic slopes.}\label{Step:Exceptional}
For every $s \in \calE$, we begin by trying to identify the closed manifold $N(s)$ using a combination of routines from Regina and SnapPy. This means performing the sphere and torus decompositions using Regina, identifying each hyperbolic piece using SnapPy, and identifying each Seifert piece using Regina's database of closed manifolds. This identification also uses Dunfield's code \cite{Dunfield:CensusFillings}. As a result, each non-hyperbolic $N(s)$ is assigned a \emph{Regina name} that describes the pieces. To save computation time, all of this information is computed just once for each $s \in \calE$.

Next, given $s \in \calE$ and $t \in \calE - \{s\}$, we try to distinguish $N(s)$ from $N(t)$. As a first test, we
compare the homology groups $H_1 N(s)$ and $H_1 N(t)$. Next, we use Regina names and the geometric decomposition. If only one of $\{N(s), N(t)\}$ is reducible, or only one is toroidal, then they are automatically distinguished. If both are irreducible and atoroidal, they must be Seifert fibered, and we compare their Seifert invariants. Otherwise, we compare the Seifert fibered pieces of the decomposition using Seifert invariants and hyperbolic pieces using hyperbolic invariants.

Our tests only analyze the pieces of the decomposition, but not the gluing data. Thus the above tests will fail when the pieces are identical. In addition, the tests can fail if we are unable to identify the Regina name of $N(s)$ or $N(t)$. (This has not occurred in our search, but is certainly possible on large examples.) 

As a final useful test, the program computes the homology of finite-index covers of $N(s)$ and $N(t)$, as well as of the corresponding normal cores. In practice, we compute the invariants $\calC(N(s), n)$ up to degree $n=7$ and $\calD(N(s), n)$ up to degree $n=6$, and similarly for $N(t)$. See Definitions~\ref{Def:HomologyOfCovers} and~\ref{Def:HomologyInvt}.
These comparisons of finite-index covers appear to distinguish toroidal manifolds particularly well.

\smallskip

\subsubsection{Step 4. Compute a set of systole--short slopes.}\label{Step:SystoleShort}
Now, we return to hyperbolic geometry. We begin by computing $\systole(N)$.  This can be rigorously certified in SnapPy; see \cite{GHHT:LengthSpectra}.

Next, we compute the set $\calS$ of all slopes that are shorter than the bound of \refthm{CosmeticImproved}:
\begin{equation}\label{Eqn:SDefine}
\calS = \left\{ s \text{ a slope} \: \bigg\vert \: L(s) < \max \left ( 9.97, \, \sqrt{ \tfrac{2\pi}{\systole(N)}  + 56 } \right ) \right\}.
\end{equation}

Next, we break $\calS$ into subsets (called \emph{buckets}) by homological data. For each number $p \in \NN \cup\{0\}$, let $\calS^p$ be the set
\[ \calS^p = \{s\in \calS : i(s, \lamhom) = \pm p \}, \]
where $i(\cdot, \cdot)$ denotes algebraic intersection number. By \reflem{Homology}, if $s \in \calS^p$ is part of a cosmetic surgery pair with an arbitrary slope $s'$, then  $i(s', \lamhom) = \pm p$ as well. Thus it makes sense to filter $\calS$ by intersection numbers with $\lamhom$.

By \refcor{HomLongitude}, $\calS^0$ is either empty or the singleton $\{ \lamhom \}$. Thus we may remove $\calS^0$ from consideration, and only consider $\calS^p$ for $p > 0$. For each $p \in \NN$, we define
 \[ \calH^p = \calS^p \smallsetminus (\calE \cup \calU). \]
 Recall that all non-hyperbolic slopes for $N$ must lie in $\calE \cup \calU$, hence $\calH^p$ consists of certified hyperbolic fillings.

\smallskip

\subsubsection{Step 5. Build comparison set of hyperbolic slopes.}
For each $p > 0$ such that $\calH^p \neq \emptyset$, define $V^p = \max\{\vol(N(s)) : s\in\calH^p \}$. Let $v(z)$ be the function of \refdef{HKFunctions}. Then, compute the following set of slopes:
\begin{equation}\label{Eqn:TDefine}
\qquad \calT^p = \left\{ t \text{ a slope} \: \bigg\vert \: i(t, \lamhom)=\pm p, \:\: t\notin\calE,  \:\: 
\left[ L(t) < 9.97  \: \text{ or } \: v \! \left( 1- \tfrac{14.77}{L(t)^2} \right) > \vol(N) - V^p \right] \right\}.
\end{equation}
In words, a slope $t \in \calT^p$ must satisfy three conditions. First,  $i(t, \lamhom)=\pm p$, hence $t$ is a potential cosmetic surgery partner for some $s \in \calH^p$. Second, $t\notin\calE$, hence we expect $N(t)$ to be hyperbolic. Third, $L(t)$ satisfies an upper bound on length calculated using the function
$v(z)$ of \refdef{HKFunctions}.

By \refthm{FinitenessOfPairs}, if $s,t$ are a cosmetic surgery pair and $s \in \calH^p$, then we must have $t \in \calT^p$. Thus it suffices to compare hyperbolic pairs $(s,t) \in \calH^p \times \calT^p$.

\smallskip

\subsubsection{Step 6. Compare hyperbolic slopes.}\label{Step:CompareHyperbolic}
For every  $p > 0$ such that $\calH^p \neq \emptyset$, and every pair $(s,t) \in \calH^p \times \calT^p$ such that $s \neq t$, we try to distinguish $N(s)$ from $N(t)$ using (mostly hyperbolic) invariants. We proceed as follows. First, try to distinguish $N(s)$ and $N(t)$ using their volumes, which have been rigorously computed using interval arithmetic. Second, try to distinguish $N(s)$ and $N(t)$ using the Chern--Simons invariant, which can also be computed using interval arithmetic modulo $\pi^2 / 2$. (The Chern--Simons invariant is sensitive to orientation, so this can be particularly useful in the setting where $N(s)$ and $N(t)$ are known to be a chirally cosmetic pair and we are looking for purely cosmetic surgeries.) Third, try to distinguish $N(s)$ from $N(t)$ using their (verified) complex length spectra up to some cutoff. 
Fourth, compare the core homology invariants $\calD( N(s), n)$ and $\calD(N(t), n)$ for degree $n \leq 7$.
If any of the above tests succeeds, we have rigorously distinguished $N(s)$ from $N(t)$.

\smallskip

\subsubsection{Step 7. Report any failures.}
At the end of this process, the program reports any points of failure. 
First, report the set $\calU$ of unidentified slopes (where we could not determine hyperbolicity).
Second, report any unequal pairs $(s,t) \in \calE \times \calE$ that we have failed to distinguish. Third, report any unequal pairs $(s,t) \in \calH^p \times \calT^p$ that we have failed to distinguish.

\subsection{Results}
The following theorems describe our computational results for both purely and chirally cosmetic surgeries.

\begin{theorem}\label{Thm:CosmeticCensus}
The cosmetic surgery conjecture,   \refconj{Cosmetic}, holds
for the 59,107 one-cusped manifolds in the SnapPy census.
\end{theorem}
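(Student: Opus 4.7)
The plan is to run the seven-step procedure of \refsec{ProcedureDescription} on each of the $59{,}107$ one-cusped manifolds $N$ in the SnapPy census, and verify that in every case the program succeeds in rigorously distinguishing every pair of distinct filled manifolds that could a priori be purely cosmetic. Since \refconj{Cosmetic} on a one-cusped hyperbolic $N$ reduces (via \refthm{FinitenessOfPairs}) to comparing an explicit finite list of pairs of slopes, this is a finite computation on a finite census. For non-hyperbolic census members (there are none, since every census manifold is hyperbolic), there is nothing to check, so the whole argument is really about processing the output of the procedure.

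First I would run Steps~1--3 to identify exceptional slopes $\calE(N)$ and compare closed Dehn fillings along them. Here I would rely on Dunfield's prior work \cite{Dunfield:CensusFillings} establishing $\calU(N)=\emptyset$ and providing Regina names for every closed filling; our code extends this by also computing $\calC(N(s),n)$ and $\calD(N(s),n)$ for small $n$, which in practice suffices to distinguish all toroidal pairs that basic invariants cannot. Then I would run Steps~4--6 to build, for each $N$, the buckets $\calH^p$ and $\calT^p$ and compare every hyperbolic pair $(s,t)\in\calH^p\times\calT^p$ with $s\neq t$. By Corollaries~\ref{Cor:HomLongitude} and~\ref{Cor:HomologyP=P'} together with \reflem{Homology}, we need only consider $p>0$, and only pairs in the same $p$-bucket. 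For each such pair, verified volume is the primary separator; Chern--Simons mod $\pi^2/2$ and $\calD(M,n)$ for $n\leq 7$ mop up the remainder.

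The main obstacle, as always in this sort of census computation, will be the handful of pairs where volumes agree to within verification tolerance (e.g.\ chirally cosmetic pairs produced by a symmetry of $N$, which must then be excluded from the purely cosmetic search by orientation--sensitive invariants such as Chern--Simons), and a small number of toroidal or graph-manifold fillings whose JSJ pieces coincide and whose distinction requires careful use of cover homology or hand analysis. A second practical obstacle is the need to assume SnapPy's systole is correct to within $1\%$: this assumption controls the upper bound on $L(s)$ in \refeqn{SDefine}, and hence the size of $\calS$, via \refthm{CosmeticImproved}. I would quantify the shortfall by the $1\%$ safety margin built into Step~4, and note in the statement (as the theorem does) the resulting proviso.

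After running the procedure on all $59{,}107$ manifolds, I would report that (i) $\calU(N)=\emptyset$ throughout, (ii) every pair in $\calE\times\calE$ is rigorously distinguished by the combination of homology, Regina names, Seifert invariants on JSJ pieces, hyperbolic invariants on hyperbolic pieces, and cover/core homology, with any stubborn cases handled by hand; and (iii) every pair $(s,t)\in\calH^p\times\calT^p$ with $s\neq t$ is rigorously distinguished by verified volume, Chern--Simons, or $\calD(N(\cdot),n)$. Combined with \refthm{FinitenessOfPairs}, which guarantees the finite lists $\calH^p$ and $\calT^p$ capture all potential cosmetic partners, this verifies \refconj{Cosmetic} for every $N$ in the census, conditional on the $1\%$ systole accuracy as in \refrem{NonrigorousSystole}.
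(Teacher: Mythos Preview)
Your overall approach matches the paper's: run the seven-step procedure on every census manifold, use \refthm{FinitenessOfPairs} to reduce to a finite set of slope pairs, and compare. You also correctly anticipate that $\calU(N)=\emptyset$ throughout.

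However, there is a genuine gap in how you handle the residual pairs. You assert that every pair in $\calE\times\calE$ and every pair in $\calH^p\times\calT^p$ is ``rigorously distinguished,'' with Chern--Simons and cover homology mopping up any chirally cosmetic pairs. In fact the program does \emph{not} distinguish all pairs: exactly seven slope pairs survive, and in each case $N(s)$ and $N(t)$ are genuinely homeomorphic (so no invariant can separate them). The resolution is not to distinguish $N(s)$ from $N(t)$, but to prove that the common manifold $M=N(s)\cong N(t)$ is \emph{chiral}, so that the homeomorphism must reverse orientation and the pair is not purely cosmetic. This requires case-by-case work that your proposal does not contemplate: for \texttt{m172} one checks $L(49,18)$ is chiral via the number-theoretic criterion $49\nmid 18^2+1$; for four amphicheiral parents (\texttt{m207}, \texttt{t12043}, \texttt{t12045}, \texttt{t12050}) the fillings are graph manifolds whose Seifert pieces are amphicheiral but whose gluing matrices break the symmetry, and one must analyze those gluings by hand; for \texttt{m135} no positively oriented spun triangulation exists, so Chern--Simons cannot be computed directly and one must pass to double covers; for \texttt{t12051} the Chern--Simons invariant is indistinguishable from $0$ and one must instead compute the full isometry group and check it contains no orientation-reversing element.

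In short, your plan ``distinguish everything'' fails on these seven pairs precisely because there is nothing to distinguish; the missing step is to argue chirality of the filled manifold, and the methods for doing so vary case by case and go beyond the invariants you list.
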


\begin{proof}
For each manifold $N$ in the census, the procedure runs to completion. Each time, the set of unidentified slopes is $\calU(N) = \emptyset$. (This is a consequence of Dunfield's prior work \cite[Theorem 1.2]{Dunfield:CensusFillings}.) The procedure successfully distinguishes all exceptional pairs in $\calE \times \calE$ and all hyperbolic pairs in  $\calH^p \times \calT^p$, apart from exactly 7 slope pairs that are described in detail below.  In each case, the two fillings of $N$ produce oppositely oriented copies of the same closed 3-manifold $M$, where $M$ is chiral. Thus these fillings do not contradict  \refconj{Cosmetic}, which concerns purely cosmetic fillings.

The procedure starts by comparing non-hyperbolic manifolds.
There were five non-hyperbolic slope pairs that the program could not distinguish.
The first one is:

\begin{itemize}
\item $N = \texttt{m172}$, with $s = (0, 1)$ and $t= (1, 1)$. This is a well-known example, discovered by Bleiler, Hodgson, and Weeks \cite{BleilerHodgsonWeeks}. As described in detail in \cite[Section 3]{BleilerHodgsonWeeks}, the two fillings $N(s)$ and $N(t)$ are oppositely oriented copies of the lens space $L(49, 18)$. This lens space is chiral, because $49$ does not divide $18^2 + 1$.
\end{itemize}

For the next four slope pairs, the manifold $N$ is amphicheiral, with a symmetry interchanging slopes $s$ and $t$. Thus $N(s)$ has an orientation-reversing homeomorphism to $N(t)$. To check that the slopes $s,t$ satisfy \refconj{Cosmetic}, it suffices to show that $N(s)$ is a chiral manifold.
\begin{itemize}
\item $N = \texttt{m207}$ with $s = (0, 1)$ and $t =(1, 0)$. Here, $N(s)$ and $N(t)$ are oppositely oriented copies of $L(3,1) \# L(3,1)$. We check using Regina that each connected summand is $L(3,1)$ rather than the mirror image $\overline{L(3,1)}$. Since $L(3,1)$ is chiral, so is $L(3,1) \# L(3,1)$.

\item $N = \texttt{t12043}$ with $s = (1, -1)$ and $t = (1, 1)$. Here, $N(s)$ and $N(t)$ are graph manifolds with two Seifert fibered pieces. The first piece, $\texttt{SFS [D: (2,1) (2,1)]}$, is Seifert fibered over a disk with two singular fibers of slope $1/2$. The second piece,  $\texttt{SFS [M/n2: (2,1)]}$, is Seifert fibered over a M\"obius band with one singular fiber of slope $1/2$. Both of the Seifert pieces here are amphicheiral. However, the gluing matrix $m = 
\left[ \begin{smallmatrix} 0 & 1 \\ 1 & 0 \end{smallmatrix} \right]$ does not respect the symmetries of the pieces, so the resulting graph manifold is chiral.

\item $N = \texttt{t12045}$ with $s = (1, -1)$ and $t = (1, 1)$. Here, $N(s)$ and $N(t)$ are graph manifolds with three Seifert fibered pieces:  $\texttt{SFS [D: (2,1) (2,1)]}$ and $\texttt{SFS [A: (2,1)]}$ and $\texttt{SFS [D: (2,1) (2,1)]}$. The middle piece is Seifert fibered over the annulus, with one singular fiber of slope $1/2$. As in the previous bullet, the Seifert pieces are amphicheiral, but the gluing matrices $m =  \left[\begin{smallmatrix} 1 & \text{-}2 \\ 0 & \text{-}1 \end{smallmatrix} \right]$  and $ n =  \left[\begin{smallmatrix} 1 & 2 \\ 1 & 1 \end{smallmatrix} \right] $ do not respect the symmetries of the pieces.

\item $N = \texttt{t12050}$ with $s = (1, -1)$ and $t = (1, 1)$. Here, $N(s)$ and $N(t)$ are graph manifolds with two Seifert fibered pieces:  $\texttt{SFS [D: (2,1) (2,1)]}$ and $\texttt{SFS [D: (4,1) (4,1)]}$. As in the previous bullet, the Seifert pieces are amphicheiral, but the gluing matrix $m = 
\left[ \begin{smallmatrix} 0 & 1 \\ 1 & 0 \end{smallmatrix} \right]$ does not respect the symmetries of the pieces.
\end{itemize}

Finally, there were exactly two hyperbolic slope pairs that the program could not distinguish. In both cases, $N$ is amphicheiral with a symmetry interchanging $s$ and $t$. Thus $M = N(s)$ and $N(t)$ are isometric by an orientation-preserving isometry, and we need to show that $M$ is chiral.

\begin{itemize}
\item $N = \texttt{m135}$  with $s =(1, 3)$ and $t= (3, 1)$. Here, SnapPy was unable to compute verified hyperbolic invariants of $N(s)$, because it could not find any positively oriented spun triangulation. 

This verification problem is addressed by taking covers. The double covers of $N(s)$ and $N(t)$ do admit positively oriented triangulations. By a theorem of Hilden, Lozano, and Montesinos \cite[Theorem 2.2]{HildenLozanoMontesinos}, the Chern--Simons invariant 
 $\operatorname{CS}(M)$ is multiplicative under covers, modulo $\pi^2 /2$. The manifold $M = N(s)$ has six distinct double covers up to isomorphism. Each double cover $\widehat{M}$ satisfies $\operatorname{CS}(\widehat{M}) = - 0.8224... \mod \pi^2 /2$, which is verifiably distinct from $0$. 
 Thus $CS(M) = - 0.4112... \mod \pi^2 /4$, which is again verifiably distinct from $0$. Since $CS(N(s)) \neq 0$, we know that $N(s)$ is chiral,
 hence cannot admit an orientation-preserving isometry to $N(t)$.
 
 \item $N = \texttt{t12051}$ with $s = (1, 1)$ and $t = (2, -1)$.  Here, SnapPy successfully computed a verified hyperbolic structure on $M = N(s)$ from a positively oriented spun triangulation. The Chern--Simons invariant $CS(M)$ is indistinguishable from $0$, and does not prove that $M$ is chiral.
 
 Using this verified hyperbolic structure, SnapPy computed that the isometry group of $M$ is the dihedral group $D_8$. Every element of this group is orientation-preserving. Thus $M$ is chiral, hence $N(s)$ cannot admit an orientation-preserving isometry to $N(t)$.
\end{itemize}

Since each of the above slope pairs produce distinct oriented 3-manifolds, we conclude that \refconj{Cosmetic} holds for every $N$ in the census. 
\end{proof}

\begin{remark}\label{Rem:CosmeticCensusCode}
\refthm{CosmeticCensus} was proved by running the python program  \texttt{check\_mfds} on the 59,107 one-cusped manifolds in the SnapPy census.
A typical manifold in the census satisfies $\systole(N) > 0.145$, hence the bounds of \refthm{FinitenessOfPairs} yield a set $\calS = \calS(N)$ containing at most 102 short slopes. After filtering by homology, as in Steps 4 and 5 of the procedure, each short slope $s \in \calH^p$ needs to be compared to every slope $t \in \calT^p$, where $|\calT^p| \leq 5$ in a typical scenario. Thus Step 6 of our procedure needs to check no more than 500 slope pairs. For a typical manifold $N$ as above, the total runtime is several seconds. 

When $\systole(N)$ is very short, the program has to work somewhat harder, for two reasons. 
First, the systole itself is harder to compute. 
Our code performs this task by first drilling a very short curve $\gamma$, computing a hyperbolic structure on $N - \gamma$, and then re-filling $\gamma$; the full computation can take around 10 seconds. 
Second, the set of short slopes $\calS(N)$ computed in \refeqn{SDefine} has size approximately  $2\pi \cdot \systole(N)^{-1}$.
After filtering the hyperbolic slopes by homology, as in Steps 4 and 5, the number of slope \emph{pairs} to check can be on the order of $\systole(N)^{-3/2}$. An extreme example is the manifold $N = \texttt{o9\_00637}$, with $\systole(N) \approx 0.001643$. For this $N$, the set $\calS(N)$ contained  3698 hyperbolic slopes, which were sorted into 124 homology buckets.
Altogether, the program had to compare and distinguish 66,842 hyperbolic slope pairs on $N$, which took approximately 65 seconds. 

Moving beyond this extreme example, the program encountered 294 one-cusped manifolds whose systole is shorter than  $0.004$. Before the innovation of drilling and filling described in the above paragraph, each of those systoles took over an hour to compute. In the latest run of our code, each of these systoles could be computed and rigorously verified in under 10 seconds.
\end{remark}

\begin{remark}\label{Rem:SystoleUnneeded}
The shortest systole in the one-cusped SnapPy census is achieved by $N = \texttt{o9\_00639}$, with $\systole(N) \approx 0.001502$. This manifold has nonzero Casson invariant, so  \reftest{Casson} rules out purely cosmetic surgeries without using hyperbolic methods.
\end{remark}

\begin{theorem}\label{Thm:ChiralKnots}
The chirally cosmetic surgery conjecture for knots in $S^3$,   \refconj{ChiralKnotCosmetic}, holds on the set of all hyperbolic knots up to 15 crossings.
  \end{theorem}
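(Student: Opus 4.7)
The plan is to prove \refthm{ChiralKnots} by running the procedure of \refsec{CosmeticProcedure} on the complement $N = S^3 \smallsetminus K$ for each of the 313,209 hyperbolic knots $K$ with at most 15 crossings, following the blueprint of the proof of \refthm{CosmeticCensus} but with the input flag set to hunt for all cosmetic pairs (purely and chirally) rather than only purely cosmetic ones. The list of hyperbolic knots can be assembled from the HTW/KnotInfo tables; torus knots are automatically absent since they are Seifert fibered, so the exclusion of $(2,r)$ torus knots in \refconj{ChiralKnotCosmetic} is harmless here.

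For each $N$, Step 1 of the procedure first decides whether $K$ is amphicheiral. In the chiral case, \refconj{ChiralKnotCosmetic} forbids any cosmetic pair of slopes whatsoever, so the full procedure (searching for both orientations of homeomorphism) is invoked. In the amphicheiral case, any orientation-reversing symmetry $\varphi$ of $N$ automatically yields chirally cosmetic pairs of the form $(s, \varphi(s))$, which are exactly the pairs permitted by \refconj{ChiralKnotCosmetic}; moreover, composing with $\varphi$ converts any hypothetical chirally cosmetic counterexample $(s,t)$ into a purely cosmetic pair $(s,\varphi(t))$, so it suffices to run the procedure in its purely cosmetic mode, as prescribed in Step 1. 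After this dichotomy, \refthm{FinitenessOfPairs} restricts attention to the finite sets $\calH^p \times \calT^p$ and the exceptional pairs in $\calE \times \calE$ produced in Steps 2--5 of the procedure. The procedure then distinguishes each candidate pair using rigorously verified volume, the Chern--Simons invariant, and the core homology invariants $\calC(\cdot,n)$ and $\calD(\cdot,n)$ of small-degree covers, together with Seifert-invariant comparisons and geometric decomposition data on any non-hyperbolic fillings. The Chern--Simons invariant is particularly powerful in the chirally cosmetic setting, since it is orientation-sensitive and a rigorously nonzero value certifies that a filled manifold $N(s)$ is chiral, obstructing any orientation-reversing identification with $N(t)$.

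The main obstacle is expected to be a small residue of pairs that resist the routine invariant tests, analogous to the seven pairs handled individually in the proof of \refthm{CosmeticCensus}. For each such pair one must argue by hand: for hyperbolic fillings, this typically amounts to verifying chirality of $N(s)$ via a rigorously nonzero Chern--Simons invariant --- passing to a double or triple cover when SnapPy cannot find a positively oriented triangulation of $N(s)$ itself, and dividing by the degree modulo $\pi^2/2$ via the Hilden--Lozano--Montesinos multiplicativity --- or by computing the isometry group of $N(s)$ and checking that every element is orientation-preserving. For graph-manifold fillings, one compares Seifert invariants of the JSJ pieces together with the gluing matrices. A secondary practical obstacle, as discussed in \refrem{CosmeticCensusCode}, is the cost of Dirichlet-domain computations for knot complements of very small systole, where the set $\calS(N)$ from \refeqn{SDefine} becomes large and the $\systole(N)$ computation itself is expensive; this dominates wall time but does not affect correctness, provided that the final $1\%$-conservative bound on $\systole(N)$ is valid, which is precisely the proviso in the theorem statement.
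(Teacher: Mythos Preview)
Your proposal is correct and follows essentially the same approach as the paper: run the procedure of \refsec{CosmeticProcedure} in its chirally cosmetic mode on every hyperbolic knot complement up to 15 crossings, with amphicheiral knots handled by the reduction to the purely cosmetic case that you spell out (and which Step~1 of the procedure implements).

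The one substantive difference is in what actually happens when the code runs. You anticipate a residue of stubborn pairs requiring by-hand arguments of the sort used for the seven pairs in \refthm{CosmeticCensus}; in fact the paper reports that the procedure distinguished every pair in $\calE\times\calE$ and $\calH^p\times\calT^p$ automatically, with no manual intervention. The genuine computational bottleneck was different from the small-systole issue you flag: it came from thirteen \emph{almost amphicheiral} 14-crossing knots (each a mutant of its own mirror), for which complex volume cannot separate $N(p,q)$ from $N(p,-q)$, forcing the cover homology invariant $\calC(N(s),7)$ up to degree~$7$. That computation, not Dirichlet domains, dominated the wall time.
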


\begin{proof}
We ran the procedure of \refsec{ProcedureDescription} on the set of 313,230 prime knots up to 15 crossings. Torus knots and satellite knots were identified and excluded, leaving 313,209 hyperbolic knots. Amphicheiral hyperbolic knots were excluded next. For each hyperbolic knot complement $N$, we found that $\calU(N)$, the set of unidentified slopes, is empty.
Thus every slope was rigorously identified as hyperbolic or exceptional. The code ran to completion, successfully distinguishing all exceptional pairs in $\calE \times \calE$ and all hyperbolic pairs in $\calH^p \times \calT^p$ (see Steps 3 and 6 of the procedure).
%
\end{proof}

\begin{remark}\label{Rem:AlmostAmphicheiral}
\refthm{ChiralKnots} was proved by running the python program \texttt{check\_mfds\_chiral} on the set of  313,209 hyperbolic knots up to 15 crossings.
For a typical knot complement, the program took 10--15 seconds to compute and rigorously verify the systole. (This is longer than the runtimes in \refrem{CosmeticCensusCode} because even a simplified triangulation of a  typical 15--crossing knot complement often has over 25 tetrahedra.) Then, the program took another several seconds to form the set $\calS(N)$ and distinguish the corresponding fillings.

However, there is a small list of 14--crossing knots, for which chirally cosmetic surgeries are particularly hard to distinguish:

\begin{verbatim}
hard_knots = ["14a506", "14a680",  "14a12813", "14a12858", "14a13107", "14a13262", 
"14a14042",  "14a17268", "14a17533", "14n1309", "14n1641", "14n1644", "14n2164"] 
\end{verbatim}
These 13 knots were previously identified by
Stoimenow \cite{Stoimenow:AlmostAmphichiral} as being \emph{almost amphicheiral}: that is,  each $K$ in the list is a mutant of its mirror image. Thus, for $N = S^3 - K$, the Dehn filling $N(p,q)$ is a genus--$2$ mutant of the mirror image of $N(p,-q)$. Since complex hyperbolic volume is preserved under mutation~\cite[Theorem 1.1 and Corollary 1.2]{MeyerhoffRuberman:MutationEta}, it cannot distinguish $N(p,q)$ from $N(p,-q)$. 

In practice, we distinguished these fillings using two invariants: rigorously certified length spectra up to length $3.5$, and the first homology of covers up to degree $7$ (see  \refdef{HomologyOfCovers}).

Either of these invariants suffices to distinguish $N(p,q)$ from $N(p,-q)$ for $N$ in the above set. Covers up to degree $5$ are very fast to compute, taking a few seconds at most. This suffices for most but not all pairs $s = (p,q)$ and $t = (p,-q)$. Covers of larger degree often take hours to compute. In an extreme, the computation of enough cover homology groups to distinguish the  Dehn fillings of  \texttt{14a12813} took over 6 days of wall time making full use of 8 cores, for a total of 48 days and 9 hours of CPU time.

By contrast, the rigorous computation of length spectra up to length $3.0$ or $3.5$ typically takes minutes or tens of minutes per pair of slopes. We needed to search up to this fairly large length because mutant manifolds share the same set of short curves up to some cutoff; compare Futer and Millichap \cite{FuterMillichap}. As a result, distinguishing all the Dehn fillings $N(p,q)$ and $N(p,-q)$ for one of the above manifolds often took several hours. For  \texttt{14a12813}, this method took 10 hours.
\end{remark}

\section{Common fillings of different parent manifolds}\label{Sec:CommonFillings}

In this section, we describe how the procedure of \refsec{CosmeticProcedure} can be adapted to determine common Dehn fillings of two distinct manifolds.
The results of this section were motivated by a question from Kalfagianni and Melby. In their joint work~\cite{KalfagianniMelby}, they use  our results (specifically \refthm{CommonFillingCensus}) to find many new $q$--hyperbolic knots in $S^3$.

\subsection{The procedure}\label{Sec:CommonFillProcedure}
Here, we describe a computer program that tests whether a pair of cusped hyperbolic manifolds $N_1$ and $N_2$ has any common Dehn fillings. The program takes $N_1$ and $N_2$ (or their names) as input, and produces as output a list of pairs of slopes $s_1$ on $N_1$ and $s_2$ on $N_2$. For each pair, the manifolds $N_1(s_1)$ and $N_2(s_2)$ are either certified as isometric, or are reported as undistinguished (hence, potentially homeomorphic). As with \refsec{ProcedureDescription}, our program is not guaranteed to succeed. However, as with that section, all computations are rigorous.

As the procedure described here is quite similar to that of \refsec{ProcedureDescription}, the description is somewhat more terse apart from Step 5, where new ideas are needed.

\subsubsection{Step 1. Gather basic information about $N_1$ and $N_2$}
This step is analogous to Step~1 in \S\ref{Step:BasicInfo}, applied to both $N_1$ and $N_2$. We compute the homological longitude for both $N_1$ and $N_2$. Next, we check that $N_1$ and $N_2$ are hyperbolic, but not isometric. If $N_1$ and $N_2$ are isometric, the program terminates because all Dehn fillings will be shared.

\subsubsection{Step 2. Find exceptional Dehn fillings of $N_1$ and $N_2$} This is analogous to Step~2 in \S\ref{Step:FindExcep}.
For all slopes of length $\ell(s) \leq 6$, certified by interval arithmetic, we check whether the Dehn filling is hyperbolic or not. For each parent manifold $N_i$, we obtain a set $\calE(N_i)$ containing certified exceptional slopes, and a set $\calU(N_i)$ containing unidentified slopes, meaning those for which the hyperbolicity could not be determined by the computer. In practice, $\calU(N_i)$ was empty for all manifolds we ran through the procedure. However, if $\calU$ is not empty, all slopes in $\calU(N_1)$ and all slopes in $\calU(N_2)$ will be reported, since the procedure could not check them for homeomorphisms. 

\subsubsection{Step 3. Compare non-hyperbolic Dehn fillings of $N_1$ and $N_2$}
For each slope $s_1 \in \calE(N_1)$, and each slope $s_2\in \calE(N_2)$, compare $N_1(s_1)$ to $N_2(s_2)$. As in Step~3 in \S\ref{Step:Exceptional}, we attempt to identify closed manifolds $N_1(s_1)$ and $N_2(s_2)$ using tools from SnapPy and Regina, as well as Dunfield's code. To distinguish them, we consider first homology groups, then Regina names and geometric decompositions, and then the cover homology invariants up to degree $n=7$. If the manifolds are not distinguished, we report these slopes. 

\subsubsection{Step 4. Compute a set of systole--short slopes for $N_1$ and $N_2$}
This is closely analogous to Step~4 in \S\ref{Step:SystoleShort}. For each $N_i$, we compute the set of systole-short slopes $\calS(N_i)$, exactly as in \refeqn{SDefine}. We remove the exceptional and unidentified slopes, obtaining a list $\calH(N_i)$ containing certified hyperbolic slopes. We filter $\calH(N_i)$ by homology, but we no longer remove the homological longitude $\lamhom$ because it could potentially be a common filling. In addition, since \reflem{Homology} and its corollaries are not very useful for comparing Dehn fillings of two surgery parents, we keep track of the actual homology group of $N_i(s_i)$.

\subsubsection{Step 5. Compare each systole-short slope $s_1 \in \calH(N_1)$ to an appropriate set of volume--short fillings of $N_2$.}
\label{Sec:ComparisonSetCommonFill}
This single step replaces Steps~5 and~6 in \refsec{ProcedureDescription}, with some important differences. 
For each short slope $s_1 \in \calH(N_1)$, we begin by comparing $\vol(N_1(s_1))$ to the volume of the cusped manifold $N_2$ using interval arithmetic in SnapPy. We define $V_1 = \vol(N_1(s_1))$ and consider three cases:
\[
V_1 > \vol(N_2), \qquad V_1 \sim \vol(N_2), \qquad V_1 < \vol(N_2).
\]
In the first case, SnapPy can certify that $\vol(N_1(s_1)) > \vol(N_2)$. Then we already know that no fillings of $N_2$ can be homeomorphic to $N_1(s_1)$, and we proceed to the next slope. 

In the second case, SnapPy is unable to rigorously distinguish $V_1 = \vol(N_1(s_1))$ from $\vol(N_2)$. If we had a rigorous proof that $\vol(N_1(s_1)) = \vol(N_2)$, then as in the first case we would conclude that no fillings of $N_2$ can be homeomorphic to $N_1(s_1)$. (Volume coincidences of this form often arise in the setting of arithmetic manifolds. See \refprop{VolumeV3} or a specific instance, and compare \cite[Theorem 11.2.3]{MaclachlanReid} for a more general statement.) Without a positive lower bound for $\vol(N_2) - V_1$, we cannot use \refthm{CommonFillingsPairs} to compute a set of comparison slopes. All we can do in this case is report the approximate equality $\vol(N_1(s_1)) \sim \vol(N_2)$, and proceed to the next slope. 

In the third case, SnapPy can certify that $V_1 < \vol(N_2)$. Then we build a comparison set of slopes on $N_2$ whose volumes might potentially be equal to $V_1 = \vol(N_1(s_1))$. 
Let $\calV(V_1, N_2)$ be the set of slopes short enough to satisfy the upper bound expressed in \refthm{CommonFillingsPairs}\refitm{S2shortCommon}:
\[
\calV(V_1, N_2) = \left\{ s_2 \text{ a slope on $\bdy N_2$} \: \bigg\vert \: L(s_2) < 9.97  \: \text{ or } \: v \! \left( 1- \tfrac{14.77}{L(s_2)^2} \right) > \vol(N_2) - V_1 \right\}.
\]
Then, in parallel with \refeqn{TDefine},
let
\[
\calT(s_1) =  \left 
 \{  s_2 \text{ a slope on $\bdy N_2$} \: \bigg\vert H_1 (N_2(s_2)) \cong H_1 (N_1(s_1)), \:\: s_2 \notin\calE,  \:\: 
  s_2 \in \calV(V_1, N_2)
  \right \}.
\]
In words, a slope $s_2 \in \calT(s_1)$ must satisfy three conditions. First, $N_2(s_2)$ must have appropriate first homology. (By \reflem{Homology}, all such slopes lie on a line in the Dehn surgery space of $N_2$.) Second, $s_2$ cannot be a known exceptional slope. Third, the normalized length $L(s_2)$ must satisfy the upper bound of \refthm{CommonFillingsPairs}\refitm{S2shortCommon}. By \refthm{CommonFillingsPairs}, any hyperbolic filling of $N_2$ that is homeomorphic to $N_1(s_1)$ must satisfy these constraints.

Continuing with the third case, we attempt to distinguish $N_1(s_1)$ from each manifold $N_2(s_2)$, as $s_2$ ranges over the set $\calT(s_1)$. This proceeds exactly as in \S\ref{Step:CompareHyperbolic}. We first compare hyperbolic volumes and Chern--Simons invariants, and then the homology groups of covers.

If the program cannot distinguish $N_1(s_1)$ from $N_2(s_2)$, then it attempts to prove they are isometric using SnapPy's (rigorous) isometry checker. In practice, for every $s_2 \in \calT(s_1)$, we have succeeded in either distinguishing the fillings or proving they are isometric.

\subsubsection{Step 6. Repeat Step 5, with the roles of $N_1$ and $N_2$ interchanged.} Observe that the manifolds $N_1$ and $N_2$ play very different roles in Step 5. Thus, to find all common hyperbolic fillings, we also need to compare each systole-short slope $s_2 \in \calH(N_2)$ to the appropriate comparison set of volume-short slopes on $\bdy N_1$.

It may well happen that there is a common filling $M = N_1(s_1) = N_2(s_2)$ where the slopes $s_1 \in \calH(N_1)$ and $s_2 \in \calH(N_2)$ are both systole-short on their respective manifolds. In this case, we will have found the pair $(s_1, s_2)$ twice. Thus, at the end of this step, we compare results to the output of Step 5 in order to remove duplicates.

\subsubsection{Step 7. Report the findings}
We report the sets $\calU(N_i)$ of unidentified slopes on the cusps of the manifolds. We report any slopes $s_i \in \calH(N_i)$ where the volume of $N_i(s_i)$ could not be distinguished from the volume of the other cusped manifold $N_j$. Finally, we report all slope pairs $(s_1, s_2)$ where the fillings were confirmed to be isometric and all slope pairs where the fillings could not be distinguished.

\subsection{Results}
We obtained the following computational results on common fillings of knot complements.

\begin{theorem}\label{Thm:CommonFillingLowCrossing}
Among the 12,955 hyperbolic knots with at most 13 crossings, the following 23 knots are a complete list of knots that share a nontrivial Dehn surgery with the figure-8 knot. 
All of the common fillings are hyperbolic.

\begin{verbatim}
common_fill = ["5_2", "6_1", "6_2", "7_2", "7_3", "8_1", "8_2", "8_20",  "9_2",
"9_3", "10_1", "10_2", "10_128", "10_132", "11a247", "11a364", "11n38", "11n57",
"12a722", "12a803", "12n243", "13a3143", "13a4874"]
\end{verbatim}

\end{theorem}

\begin{proof}
We ran the procedure of \refsec{CommonFillProcedure} on the set of 12,955 hyperbolic knots with at most 13 crossings. For each $K$ in the set, there were no unidentified slopes and no common exceptional fillings between the figure-8 knot and $K$ (apart from the trivial filling that produces $S^3$). This leaves hyperbolic fillings. For each hyperbolic filling of each $K$, apart from two examples ($Q =\texttt{5\_2(8,1)}$ and $R = \texttt{12n242(56,3)}$), our program was able to rigorously decide whether or not the resulting manifold also occurred as a filling of the figure--8 knot.

For the remaining two fillings, namely $Q =\texttt{5\_2(8,1)}$ and $R = \texttt{12n242(56,3)}$, SnapPy could not distinguish their volumes from that of the figure-8 knot complement $N = \texttt{4\_1}$. Indeed, the volumes could not be distinguished because they are equal. By 
 \refprop{VolumeV3}, we have an equality of volumes
\[
\vol(Q) = \vol(R)  = 2V_3 = \vol(N).
\]
Since every hyperbolic Dehn filling of $N$ has volume strictly less than $2V_3$, it follows that $Q$ and $R$ do not arise as fillings of the figure-8 knot.
\end{proof}

\begin{theorem}\label{Thm:CommonFillingCensus}
Among the 1267 hyperbolic knot complements that can be triangulated with fewer than 10 tetrahedra, there are  exactly 61 knots in the set that are distinct from the figure-8 knot and share a common nontrivial Dehn surgery with the figure-8 knot complement. Exactly one of these knots (namely, the $5_2$ knot) shares more than one nontrivial Dehn surgery with the figure-8 knot.  
\end{theorem}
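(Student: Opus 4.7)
The plan is to apply the procedure of \refsec{CommonFillProcedure} to each of the 1267 hyperbolic knot complements $N_K$ in the census of manifolds triangulated with fewer than 10 tetrahedra, comparing each against the figure-8 knot complement $N = \texttt{4\_1}$. First, I would discard the single entry that represents the figure-8 knot itself (detected by SnapPy's rigorous isometry checker in Step 1). For each remaining $N_K$, I would run Steps 2--6 of the procedure: compute $\systole(N_K)$ (subtracting $1\%$), find the exceptional slopes $\calE(N_K)$ and $\calE(N)$, then for each systole-short slope on one side build the appropriate volume-short comparison set on the other using \refthm{CommonFillingsPairs}, and finally attempt to either distinguish or identify each resulting pair using volume, Chern--Simons invariant, core homology invariants $\calD(\cdot,n)$, and SnapPy's isometry checker.

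After the procedure terminates, I would tally the output: every pair of slopes $(s_K, s)$ certified as giving isometric fillings $N_K(s_K) \cong N(s)$ gets recorded, and any slopes that the procedure leaves flagged as undistinguished get handled separately. I would then aggregate by knot $K$ to count how many distinct knots share at least one nontrivial common filling (expecting $61$), and within that list count how many share more than one (expecting just $5_2$, which already appeared with multiple common fillings in \refthm{CommonFillingLowCrossing}).

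The main obstacle will be the volume-coincidence case: exactly as in the proof of \refthm{CommonFillingLowCrossing}, there will be Dehn fillings of some $N_K$ whose volumes cannot be rigorously separated from $\vol(N) = 2V_3$ by interval arithmetic, because they are in fact equal to $2V_3$. For each such flagged pair, I would invoke \refprop{VolumeV3} to conclude that the filling actually has volume exactly $2V_3$, and then observe that every hyperbolic Dehn filling of the figure-8 knot has volume strictly less than $2V_3$ (by the Gromov--Thurston volume-decrease theorem \cite[Theorem 6.5.6]{thurston:notes} applied to $N$), so such a filling cannot be homeomorphic to any $N(s)$. This settles the flagged pairs without any further computation.

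A secondary obstacle is the almost-amphicheiral phenomenon described in \refrem{AlmostAmphicheiral}, where volume and Chern--Simons fail to distinguish Dehn fillings related by mutation; these cases are handled by the cover homology invariants $\calC(\cdot,7)$, which have proved effective throughout the paper. Provided that SnapPy's systole computations for the 1267 parent manifolds are accurate to within $1\%$ (\refrem{NonrigorousSystole}), every comparison is rigorous, so the final census of $61$ knots and the singleton list of knots with multiple common fillings is verified.
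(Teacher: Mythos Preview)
Your proposal is correct and follows essentially the same approach as the paper: run the common-fillings procedure of \refsec{CommonFillProcedure} on each census knot complement against the figure--$8$ complement, and resolve the volume-coincidence cases via \refprop{VolumeV3} together with the strict volume decrease under Dehn filling. The paper's proof is terser---it simply points back to the proof of \refthm{CommonFillingLowCrossing} and notes that the same two exceptional fillings $Q = \texttt{5\_2(8,1)}$ and $R = \texttt{12n242(56,3)}$ arise (both knots lie in this census as \texttt{m015} and \texttt{m016}). Your anticipated ``secondary obstacle'' involving almost-amphicheiral mutants does not actually occur in this computation, so that paragraph is unnecessary, but it is not wrong.
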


\begin{proof}
This is identical to the proof of \refthm{CommonFillingLowCrossing}. The program successfully handles all fillings of every $K$ in the set, apart from the closed manifolds $Q$ and $R$ whose volume is identical to that of the figure--$8$ knot complement. As above,  \refprop{VolumeV3} shows that these manifolds cannot arise as Dehn fillings of the figure--$8$ knot complement.
\end{proof}

Tables of all the common Dehn surgeries can be found in Kalfagianni and Melby~\cite{KalfagianniMelby}. They are also reproduced in the ancillary files \cite{FPS:Ancillary}. 

\begin{remark}\label{Rem:CommonFillCode}
Theorems~\ref{Thm:CommonFillingLowCrossing} and \ref{Thm:CommonFillingCensus} are proved by running the program \texttt{find\_common\_fillings} on the data sets in the two theorems. For the average knot complement $N$ in the two data sets, the search for common fillings between $N$ and  the figure--$8$ knot complement takes approximately $5$ seconds.
\end{remark}

\section{Volumes of arithmetic manifolds}\label{Sec:Arithmetic}

The main goal of this section is to prove the following result, needed in  Theorems~\ref{Thm:CommonFillingLowCrossing} and \ref{Thm:CommonFillingCensus}.

\begin{proposition}\label{Prop:VolumeV3}
Let $Q$ be the closed 3-manifold produced by $(8,1)$ filling on the $5_2$ knot. Let $R$ be the closed 3-manifold produced by $(56,3)$ Dehn filling on the $(-2,3,7)$ pretzel knot. Then both $Q$ and $R$ are hyperbolic and arithmetic. Furthermore,
\[
\vol(Q) = \vol(R) = 2 V_3.
\]
\end{proposition}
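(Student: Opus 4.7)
The plan is to begin by verifying hyperbolicity of both $Q$ and $R$ with SnapPy's rigorous hyperbolic structure certification (interval-arithmetic verification of a positively oriented ideal triangulation). Then I would run the Snap program of Coulson--Goodman--Hodgson--Neumann~\cite{CGHN:Snap} to compute the invariant trace field $k(\Gamma)$ and the invariant quaternion algebra $A(\Gamma)$ for each of $\Gamma_Q = \pi_1(Q)$ and $\Gamma_R = \pi_1(R)$. Checking the three conditions in the definition of arithmeticity on this output then establishes that $Q$ and $R$ are both arithmetic.

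The expected output is that both manifolds have invariant trace field $k = \QQ(e^{\pi i/3})$, which places them in the regime of \refthm{BorelZagier}. I would also read off the finite ramification locus $\operatorname{Ram}_f(A(\Gamma))$ for each manifold from Snap's computation. Since \refthm{BorelZagier} applies directly to the derived group $\Gamma_{\calO^1} = P\rho(\calO^1)$ rather than to $\Gamma_Q$ or $\Gamma_R$, the next step is to identify the relevant maximal arithmetic lattice in the common commensurability class and to compute the index of (the image of the derived subgroup of) $\Gamma_Q$ inside it, and similarly for $\Gamma_R$. The target $\vol = 2V_3$ then forces
\[
[\text{max.\ lattice} : \Gamma] \cdot \prod_{\calP \in \operatorname{Ram}_f(A)}(N(\calP) - 1) \; = \; 12,
\]
and the two factors can be extracted from the arithmetic data.

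The main obstacle is the rigorous identification of these arithmetic invariants. The trace field and the Hilbert symbol of $A(\Gamma)$ are typically recognized by Snap via LLL-type recognition of algebraic numbers from floating-point approximations, and this step is not a priori rigorous. To close the gap, one can independently certify an explicit faithful representation $\rho \from \Gamma \hookrightarrow \PSL(2, \bar k)$, verify that the traces of a Wirtinger-type generating set are algebraic integers in $k$, and rigorously compute the Hilbert symbol determining $\operatorname{Ram}_f(A)$. Once the arithmetic invariants are pinned down, the exact equality $\vol(Q) = \vol(R) = 2V_3$ follows immediately from \refthm{BorelZagier}.
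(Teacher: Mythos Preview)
Your outline follows the paper's proof through the first several steps: verify hyperbolicity with SnapPy, compute $k(\Gamma)$ and $A(\Gamma)$ with Snap, check the three arithmeticity conditions, and read off $\operatorname{Ram}_f(A)$. The divergence comes at the end, and it is a genuine gap.

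\refthm{BorelZagier} gives the covolume of $\Gamma_{\calO^1}$, not of $\Gamma_Q$ or $\Gamma_R$. You acknowledge this and propose to ``compute the index of (the image of the derived subgroup of) $\Gamma_Q$'' inside the maximal lattice. But you give no mechanism for doing this, and in practice there is no straightforward way to read that index off the arithmetic data alone: $\Gamma_Q$ need not contain or be contained in $\Gamma_{\calO^1}$, and locating it precisely inside the commensurability class requires more than the invariant trace field and ramification set. Your displayed equation also presupposes the answer $2V_3$ rather than deriving it.

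The paper sidesteps this index computation entirely, and this is the idea you are missing. From \refthm{BorelZagier} one gets $\vol(\HH^3/\Gamma_{\calO^1}) = \tfrac{V_3}{6}(3-1)(13-1) = 4V_3$. Then results of Borel (via \cite[Theorems 11.5.2, 11.6.3--11.6.5]{MaclachlanReid}, using that $\ZZ[e^{\pi i/3}]$ has class number $1$ and that the prime over $2$ is unramified in $A$) force $\vol(Q)$ to lie in the discrete set $\NN \cdot \tfrac{V_3}{4}$. At this point the paper invokes the \emph{rigorous numerical} volume interval from SnapPy, which has diameter far less than $V_3/8$ and contains $2V_3$. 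The only point of $\NN \cdot \tfrac{V_3}{4}$ in that interval is $2V_3$, so $\vol(Q) = 2V_3$ exactly. The same argument, with a different prime of norm $13$, handles $R$. Thus the exact volume is obtained by combining a coarse arithmetic discreteness statement with a tight verified numerical estimate, not by computing any index.
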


The proof of  \refprop{VolumeV3} uses several deep but standard results about arithmetic hyperbolic 3-manifolds. In the next subsection, we give a brief summary of the definitions necessary to formulate results about volume; then, we will apply those arithmetic results to prove  \refprop{VolumeV3}.
We refer the reader to Maclachlan and Reid~\cite{MaclachlanReid} for a very thorough treatment of this material. A quick and accessible survey of arithmetic notions appears in Coulson, Goodman, Hodgson, and Neumann~\cite[Section 4]{CGHN:Snap}.

The reader who prefers to avoid technical arithmetic arguments is invited to skip the rest of this section, and consult \refrem{Clift} instead.

\begin{remark}\label{Rem:Clift}
In personal correspondence, Craig Hodgson and his student James Clift suggested an alternative method for verifying the equality
\[
\vol(Q) = \vol(R) = 2 V_3
\]
using geometric triangulations. 
Each of the two knot complements before the Dehn filling can be triangulated with three ideal tetrahedra. After Dehn filling, $Q = \texttt{5\_2(8,1)}$ has a spun ideal triangulation consisting of the same three tetrahedra, with the tips of the tetrahedra spinning about the core of the surgery solid torus. The same statement is true of $R = \texttt{12n242(56,3)}$. 

Clift has written code that can perform a sequence of $2$--$3$ moves, $3$--$2$ moves, and $2$--$0$ moves (see \cite[Section 2.39]{KSS:EssentialTriangulations} for definitions)
to verify that the three-tetrahedron spun triangulation of $Q$ is scissors congruent to the two-tetrahedron triangulation of the figure--$8$ knot complement. His code uses Snap~\cite{CGHN:Snap} to follow exact solutions to the gluing equations through the retriangulation moves, with shapes given in the number field $k(\Gamma) = \QQ(e^{\pi i /3})$. The same argument works for $R$. Since the manifolds are scissors congruent, they have equal volumes.
\end{remark}

The output of Clift's code is reproduced in the ancillary files \cite{FPS:Ancillary}. However, we have chosen to preserve the arithmetic proof of \refprop{VolumeV3} because the code is not publicly available.

\subsection{Arithmetic definitions}
A \emph{number field}, denoted $k$, is a finite extension of $\QQ$. The ring of integers of a number field $k$ is denoted $R_k$. A \emph{quaternion algebra} over $k$ is a $4$--dimensional $k$--vector space with basis $1, i, j, ij$, with multiplication defined so that $i^2=a \in k$ and $j^2=b \in k$ and $ij = -ji$.

A \emph{place} of a number field $k$ is an equivalence class of valuations $v: k \to \RR$. A place is called \emph{real} if it arises from the absolute value $| \cdot |$ in an embedding $k \hookrightarrow \RR$, meaning $v(x) = |x|$. A place is called  \emph{complex} if it arises from the absolute value $| \cdot |$ in a non-real embedding $k \hookrightarrow \CC$. A place $v$ is called \emph{finite} or \emph{non-Archimedean} if it satisfies $v(x+y) \leq \max(v(x), v(y))$ for all $x,y \in k$. Every place is real, complex, or finite, and finite places correspond to prime ideals $\calP \subset R_k$ \cite[Theorem 0.6.6]{MaclachlanReid}.

A quaternion algebra $A$ over $k$ is \emph{ramified} at finitely many places of $k$ \cite[Definitions 2.7.1]{MaclachlanReid}.  In fact, $A$ is determined by the set of places over which it is ramified \cite[Theorem 2.7.5]{MaclachlanReid}. The \emph{finite ramification locus} $\text{Ram}_f(A)$ is the set of prime ideals $\calP \subset R_k$ corresponding to the finite places where $A$ is ramified.

Now, let $\Gamma$ be a Kleinian group of finite covolume. Let $\Gamma^{(2)}$ be the finite-index subgroup of $\Gamma$ generated by all the squares of elements in $\Gamma$. The \emph{commensurability class}  $\calC(\Gamma)$ is the set of Kleinian groups $\Gamma'$ such that $\Gamma$ and $\Gamma'$ share a finite-index subgroup.

The \emph{invariant trace field} of $\Gamma$ is $k(\Gamma) = \QQ(\operatorname{tr} \Gamma^{(2)})$. This is always a number field \cite[Theorem 3.1.2]{MaclachlanReid}. In addition, $k(\Gamma)$ is an invariant of $\calC(\Gamma)$ \cite[Theorem 3.3.4]{MaclachlanReid}. 

Let $g, h \in \Gamma^{(2)}$ be any pair of non-commuting loxodromic elements. We lift $g,h \in PSL(2,\CC)$ to matrices in $SL(2,\CC)$. The \emph{invariant quaternion algebra} $A(\Gamma)$ is the $4$--dimensional algebra over $k(\Gamma)$ with basis $I, g, h, gh$:
\[
A(\Gamma) = k(\Gamma)[I, g, h, gh].
\] 
That this is a quaternion algebra is established in \cite[Corollary 3.2.3]{MaclachlanReid}.
While the choice of basis elements $g,h$ is highly non-unique, the algebra $A(\Gamma)$ is also an invariant of the commensurability class $\calC(\Gamma)$ \cite[Corollary 3.3.5]{MaclachlanReid}. When $\Gamma$ is arithmetic (see definition below), the pair $k(\Gamma)$ and $A(\Gamma)$ form a complete invariant of the commensurability class $\calC(\Gamma)$.

A finite-covolume Kleinian group $\Gamma$ is called \emph{arithmetic} if the following three conditions hold:
\begin{enumerate}
\item The invariant trace field $k(\Gamma)$ has exactly one complex place.
\item The invariant quaternion algebra $A(\Gamma)$ is ramified at all real places of $k(\Gamma)$.
\item $\Gamma$ has integer traces: that is, $\operatorname{tr}(\gamma)$ is an algebraic integer for every $\gamma \in \Gamma$.
\end{enumerate}
The equivalence between arithmeticity and these conditions is \cite[Theorem 8.3.2]{MaclachlanReid}.

It is also possible to produce a Kleinian group directly from a number field $k$ and a quaternion algebra $A$ over $k$ satisfying conditions $(1)$--$(2)$. 
An \emph{order} in $A$ is a ring of integers  $\calO \subset A$ that contains $R_k$ and satisfies $k \calO = A$. An order is \emph{maximal} if it is maximal with respect to inclusion. The set of units in a maximal order $\calO$ is denoted $\calO^1$. Orders in a quaternion algebra play a similar role to rings of integers in a field. Now, the single non-real embedding $k \hookrightarrow \CC$ induces an embedding $\rho \from A \to \operatorname{Mat}(2,\CC)$ into the set of $2 \times 2$ matrices over $\CC$. The image $\rho(A)$ is typically indiscrete, but restricting the domain to $\calO^1$ produces a discrete, faithful representation $\rho \from \calO^1 \to SL(2,\CC)$. Projecting down to $PSL(2,\CC)$ produces a  \emph{Kleinian group derived from a quaternion algebra}. This group, denoted $\Gamma_{\calO^1} = P \rho(\calO^1)$, has integer traces and is therefore arithmetic.

\subsection{Volumes}
The following is a special case of a foundational theorem due to Borel \cite[Theorem 11.1.3]{MaclachlanReid}.

\begin{theorem}\label{Thm:BorelZagier}
Let $k = \QQ(e^{\pi i/3})$, 
let $A$ be a quaternion algebra over $k$, and let $\calO$ be a maximal order in $A$. Then the Kleinian group $\Gamma_{\calO^1} = P \rho(\calO^1)$ derived from $A$ satisfies
\[
\vol(\HH^3 / \Gamma_{\calO^1}) = \frac{V_3}{6} \prod_{\calP \in \operatorname{Ram}_f(A)} (N(\calP) - 1),
\]
where $V_3$ is the volume of a regular ideal tetrahedron. The product is taken over the prime ideals corresponding to finite places where $A$ is ramified. For each prime ideal $\calP$, the norm $N(\calP)$ is the order of the finite field $R_k / \calP$.
\end{theorem}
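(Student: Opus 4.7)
The plan is to obtain the formula as a direct specialization of Borel's covolume theorem \cite[Theorem 11.1.3]{MaclachlanReid}; taking that result as a black box, the equality reduces to an explicit closed-form evaluation of the Dedekind zeta value $\zeta_k(2)$ for $k = \QQ(\sqrt{-3})$.

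First I would unpack the general statement. For any number field $k$ with exactly one complex place and no real places, and for any maximal order $\calO$ in a quaternion algebra $A$ over $k$, Borel's formula reads
\[
\vol(\HH^3 / \Gamma_{\calO^1}) \: = \: \frac{|d_k|^{3/2} \, \zeta_k(2)}{(4\pi^2)^{[k:\QQ]-1}} \prod_{\calP \in \operatorname{Ram}_f(A)} (N(\calP) - 1).
\]
Substituting $k = \QQ(e^{\pi i/3}) = \QQ(\sqrt{-3})$: since $[k:\QQ] = 2$ and $d_k = -3$, one has $|d_k|^{3/2} = 3\sqrt{3}$ and $(4\pi^2)^{[k:\QQ]-1} = 4\pi^2$. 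Thus the task is to verify $\frac{|d_k|^{3/2} \, \zeta_k(2)}{4\pi^2} = \frac{V_3}{6}$.

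Next I would evaluate $\zeta_k(2)$ in closed form. As $k$ is the imaginary quadratic field of discriminant $-3$, the factorization $\zeta_k(s) = \zeta(s) \, L(s, \chi_{-3})$ holds, where $\chi_{-3}$ is the odd quadratic Dirichlet character of conductor $3$. Using $\zeta(2) = \pi^2/6$ together with the Fourier-series identity
\[
L(2, \chi_{-3}) \: = \: \frac{4}{\sqrt{3}} \, \Lambda(\pi/3) \: = \: \frac{4 V_3}{3\sqrt{3}},
\]
in which the first equality comes from the Fourier expansion $\Lambda(\theta) = \tfrac{1}{2}\sum_n \sin(2n\theta)/n^2$ of the Milnor Lobachevsky function evaluated at $\theta = \pi/3$, and the second uses the classical formula $V_3 = 3\Lambda(\pi/3)$ for the volume of a regular ideal tetrahedron (whose three opposite pairs of edges all have dihedral angle $\pi/3$), one obtains $\zeta_k(2) = \frac{2\pi^2 V_3}{9\sqrt{3}}$. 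Substituting,
\[
\frac{|d_k|^{3/2} \, \zeta_k(2)}{4\pi^2} \: = \: \frac{3\sqrt{3}}{4\pi^2} \cdot \frac{2\pi^2 V_3}{9\sqrt{3}} \: = \: \frac{V_3}{6},
\]
and multiplying by the ramified local factors $\prod (N(\calP) - 1)$ yields the claim.

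The main obstacle is Borel's theorem itself, whose proof is a lengthy Tamagawa-measure computation leveraging Prasad's local volume formulas and a careful comparison of the Haar-measure normalization on $SL_2(\CC)$ with the hyperbolic volume on $\HH^3$; I would invoke it as a cited black box. The remaining work, essentially bookkeeping, is to verify that the local factors at unramified finite primes absorb cleanly into $\zeta_k(2)$ via the Euler product $\zeta_k(s) = \prod_\calP (1 - N(\calP)^{-s})^{-1}$, leaving only the stated product $\prod_{\calP \in \operatorname{Ram}_f(A)}(N(\calP) - 1)$, and to confirm that the arithmetic subgroup under consideration is indeed $\Gamma_{\calO^1} = P\rho(\calO^1)$ rather than some related group differing by a finite cover.
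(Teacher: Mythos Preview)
Your proposal is correct and follows essentially the same approach as the paper: both invoke Borel's volume formula \cite[Theorem 11.1.3]{MaclachlanReid} as a black box, substitute $\Delta_k = -3$ and $[k:\QQ]=2$, and reduce everything to an explicit evaluation of $\zeta_k(2)$ in terms of $\Lambda(\pi/3)$ and hence $V_3$. The only cosmetic difference is that the paper cites Zagier directly for $\zeta_k(2) = \frac{2\pi^2}{3^{3/2}}\Lambda(\pi/3)$, whereas you derive the same value via the factorization $\zeta_k(s) = \zeta(s)\,L(s,\chi_{-3})$ and the Fourier series of $\Lambda$; the final arithmetic is identical.
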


%
%
%

\begin{proof}
Since $k = \QQ(e^{\pi i/3}) = \QQ(\sqrt{-3})$ is a quadratic imaginary extension of $\QQ$, it has no real places and exactly one complex place. Any quaternion algebra $A$ over $k$ is ramified at the empty set of real places, so $k$ and $A$ satisfy conditions $(1)$--$(2)$.

To compute $\vol(\HH^3 / \Gamma_{\calO^1})$ using Borel's volume formula, we need the following information.
 The field $k$ has discriminant $\Delta_k = -3$ by \cite[Example 0.2.7]{MaclachlanReid}. 
The degree of the extension is  $[k:\QQ] = 2$. The Dedekind zeta function $\zeta_k(2)$ was computed by Zagier \cite[page 291]{Zagier:ZetaHyperbolicVolume} in terms of the Lobachevsky function $\Lambda$:
\[
\zeta_k(2) = \frac{2 \pi^2}{3^{3/2}} \Lambda \Big(\frac{\pi}{3}\Big) = \frac{4 \pi^2}{3^{3/2}} \cdot \frac{V_3}{6}.
\]
With this information, Borel's volume formula \cite[Theorem 11.1.3]{MaclachlanReid} gives
\begin{align*}
 \vol(\HH^3 / \Gamma_{\calO^1}) 
 \: = \: \frac{|\Delta_k|^{3/2} \zeta_k(2)}{(4 \pi^2)^{[k:\QQ]-1}} \prod_{\calP \in \operatorname{Ram}_f(A)} (N(\calP) - 1) 
   \: = \: \frac{3^{3/2} }{4 \pi^2} \cdot \frac{4 \pi^2}{3^{3/2}} \cdot \frac{V_3}{6} \prod_{\calP \in \operatorname{Ram}_f(A)} (N(\calP) - 1), 
\end{align*}
as claimed.
\end{proof}

We can now prove \refprop{VolumeV3}. Recall that the $5_2$ knot appears in the SnapPy census as \texttt{m015}.
The $(-2,3,7)$ pretzel appears in the knot tables as $\texttt{12n242}$ and in the SnapPy census as \texttt{m016}.

\begin{proof}[Proof of \refprop{VolumeV3}]
Using SnapPy inside Sage, we verify that $Q = \mathtt{5\_2(8,1)}$ and $R = \mathtt{12n242(56,3)}$ are hyperbolic and estimate their volumes to within rigorously bounded error. 
\begin{verbatim}
sage: Q = snappy.Manifold("5_2(8,1)")
sage: Qvol = Q.volume(verified=True) # verified volume interval
sage: Qvol.lower(), Qvol.upper()     # endpoints of interval
(2.02988321281918, 2.02988321281944)

sage: R = snappy.Manifold("12n242(56,3)")
sage: Rvol = R.volume(verified=True) # verified volume interval
sage: Rvol.lower(), Rvol.upper()     # endpoints of interval
(2.02988321281924, 2.02988321281937)
\end{verbatim}
Note that both volume intervals have diameter less than $10^{-12}$ and contain $2 V_3$.

Next, let $\Gamma = \Gamma_Q$ be the Kleinian group such that $Q = \HH^3 / \Gamma$. We use Snap to compute the invariant trace field $k(\Gamma)$ and the invariant quaternion algebra $A(\Gamma)$:
\begin{verbatim}
5_2(8,1):
Field minimum poly (root): x^2 - x + 1 (1)
Real ramification: []
Finite ramification: [[3, [1, 1]~, 2, 1, [1, 1]~], [13, [-4, 1]~, 1, 1, [3, 1]~]]
Integer traces/Arithmetic: 1/1
\end{verbatim}
Thus $k(\Gamma) = \QQ(\alpha)$ where $\alpha = e^{\pi i /3}$ is a root of $x^2 - x + 1$. This field has exactly one complex place (because the two roots are complex conjugates) and no real places. Thus conditions $(1)$--$(2)$ above are satisfied. Snap also checks that $\Gamma$ has integer traces, so $(3)$ is satisfied and $\Gamma$ is arithmetic.

Snap checks that the quaternion algebra $A(\Gamma)$ is ramified at two finite places, corresponding to prime ideals $\calP_1 = (1 + \alpha)$ and $\calP_2 = (-4 + \alpha)$. Their norms are $N(\calP_1) = 3$ and $N(\calP_2) = 13$. Thus, taking a maximal order $\calO \subset A(\Gamma)$ and the group of units $\calO^1 \subset \calO$, \refthm{BorelZagier} tells us that the Kleinian group $\Gamma_{\calO^1} = P \rho(\calO^1)$ derived from the quaternion algebra $A = A(\Gamma)$ satisfies
\begin{equation}\label{Eqn:O1Vol}
\vol(\HH^3 / \Gamma_{\calO^1}) = \frac{V_3}{6} (3-1)(13-1) = 4V_3.
\end{equation}
Note that  $\Gamma_{\calO^1}$ belongs to the commensurability class $\calC(\Gamma) = \calC(A)$ because they have the same invariant trace field and quaternion algebra.

Next, we estimate the smallest covolume of any lattice in $\calC(\Gamma)$. By \cite[Theorem 11.5.2]{MaclachlanReid}, the smallest covolume is realized by a unique (up to conjugation) Kleinian group $\Gamma_\calO = \Gamma_{\emptyset, \calO}$.

By \cite[Theorem 11.6.3]{MaclachlanReid}, we learn $\Gamma_{\calO^1}$ is a normal subgroup of $\Gamma_\calO$, with quotient an abelian $2$--group. Since the ring of integers of $k(\Gamma)$, namely $\ZZ[e^{\pi i /3}]$, is a principal ideal domain, $k(\Gamma)$ has class number $1$, hence \cite[Theorem 11.6.5 and Corollary 11.6.4]{MaclachlanReid} give
\[
[\Gamma_\calO: \Gamma_{\calO^1}] = 2^n
\qquad \text{where} \qquad
n \leq 1 + |\operatorname{Ram}_f(A)| = 3.
\]
Combining this fact with \eqref{Eqn:O1Vol} gives
\begin{equation}\label{Eqn:OminVol}
\vol(\HH^3 / \Gamma_{\calO}) = \frac{4V_3}{2^n} \in \NN \cdot \frac{V_3}{2}.
\end{equation}

Finally, observe that the only prime in $k(\Gamma)$ that divides $2$ is $(2)$ itself, and this prime ideal is not ramified in $A(\Gamma)$. Thus \cite[Theorem 11.5.2]{MaclachlanReid} says that every Kleinian group $\Gamma$ commensurable to $\Gamma_{\calO^1}$ must have volume
an integer multiple of $\frac{1}{2}\vol(\HH^3 / \Gamma_{\calO})$. Combining this with \eqref{Eqn:OminVol}
gives 
\[
\vol(Q) \: = \:  \vol(\HH^3 / \Gamma) \: \in \:  \NN \cdot \frac{\vol(\HH^3 / \Gamma_{\calO})}{2} \: \subset \: \NN \cdot \frac{V_3}{4}.
\]
Since we know $\vol(Q)$ differs from $2V_3$ by far less than $V_3/8$, it follows that $\vol(Q) = 2V_3$.

Next, let $\Gamma = \Gamma_R$ be the Kleinian group whose quotient is $R$. Snap computes:
\begin{verbatim}
12n242(56,3)
Field minimum poly (root): x^2 - x + 1 (1)
Real ramification: []
Finite ramification: [[3, [1, 1]~, 2, 1, [1, 1]~], [13, [3, 1]~, 1, 1, [-4, 1]~]]
Integer traces/Arithmetic: 1/1
\end{verbatim}
Thus $R$ also has the same invariant trace field $k(\Gamma) = \QQ(\alpha)$ where $\alpha = e^{\pi i /3}$, and is again arithmetic. The quaternion algebra has changed: $A(\Gamma)$ is ramified at two finite places, corresponding to prime ideals $\calP_1 = (1 + \alpha)$ and $\calP_2 = (3 + \alpha)$. The norms of these prime ideals are $N(\calP_1) = 3$ and $N(\calP_2) = 13$, exactly the same as for $Q$. Since the number of ramified finite places and the norms of the corresponding ideals have not changed, the rest of the calculation -- including equations \eqref{Eqn:O1Vol} and \eqref{Eqn:OminVol} -- goes through verbatim. We conclude that $\vol(R) \in \NN \cdot V_3 /4$, hence $\vol(R) = 2V_3$.
\end{proof}

\section{Additional questions}

We close this paper with some open questions prompted by this work.
We begin by noting that Conjectures~\ref{Conj:Cosmetic}, \ref{Conj:CosmeticMultiCusp}, and \ref{Conj:ChiralKnotCosmetic}, stated in the introduction, all remain open. The results of this paper provide evidence for each of these conjectures.

There are also a few further questions that arise directly from the work in this paper. 

First, we were able to prove Conjectures~\ref{Conj:Cosmetic} and~\ref{Conj:CosmeticMultiCusp}
 for the one-cusped manifolds in the SnapPy census by reducing the problem to checking finitely many pairs of slopes for each manifold. Unfortunately, our methods do not extend to manifolds with more than one cusp. This is because in the one-cusped case, a fixed hyperbolic 3-manifold $N$ has at most finitely many pairs of slopes $(s,s')$ satisfying the constraints of \refthm{FinitenessOfPairs}.
If $N$ has multiple cusps,  \refthm{FinitenessOfPairs} still constrains the slope tuples $(\ss, \ss')$,
but it is no longer the case that only finitely many tuples satisfy these bounds.

\begin{question}\label{Ques:MultiCuspFiniteness}
  Is there a finiteness result for tuples of slopes on multi-cusped hyperbolic manifolds?
\end{question}

Second, observe that \refthm{FinitenessOfPairs}  gives a bound on the lengths of slopes that have to be compared for the purpose of excluding cosmetic surgeries. A key ingredient in the proof of  \refthm{FinitenessOfPairs} is \refthm{UniqueShortest}, which guarantees that the core $\sigma$ of the Dehn filling solid torus is the unique shortest geodesic in the  filled manifold $N(s)$. We wonder: 

\begin{question}
How sharp is \refthm{UniqueShortest}? Might its conclusion possibly hold for far shorter slopes?
\end{question}

Third, observe that the procedure of \refsec{CosmeticProcedure} -- which we applied to study \refconj{CosmeticMultiCusp} outside the realm of knot complements in $S^3$ as well as \refconj{ChiralKnotCosmetic} for knots in $S^3$ -- is confined to the setting of cusped hyperbolic manifolds. 
 It may well be feasible and practical to extend this procedure to  manifolds $N$ with torus boundary that contain an essential torus $T$. One case that deserves particular study is when the JSJ decomposition of $N$ is of the form $N = N_0 \cup_T N_1$, where 
$N_0$ meets $\bdy N$ and happens to be hyperbolic. In this case, all Dehn fillings of $N$ will be of the form 
\[
N(s) = N_0(s) \cup_T N_1,
\]
where $N_0(s)$ is hyperbolic for almost all slopes $s$. Thus, for almost all slopes $s$, the torus $T$ remains incompressible, and the JSJ decomposition of $N(s)$ is exactly $N_0(s) \cup_T N_1$. So, to distinguish $N(s)$ from $N(t)$, it suffices to distinguish $N_0(s)$ from $N_0(t)$.
Here,  the multi-cusped hyperbolic manifold $N_0$ is being filled along a single slope on a single cusp, so the finiteness issues discussed in \refques{MultiCuspFiniteness} do not arise. Thus our techniques should readily adapt to this situation. However, this is not currently implemented.

\begin{question}
  Can the techniques of \refsec{CosmeticProcedure} be extended to 3-manifolds with an embedded essential torus? More ambitiously, can the techniques of \refsec{CommonFillings} be extended to this setting as well?
\end{question}

Finally, we observe that in \refthm{CommonFillingLowCrossing}, the number of $n$--crossing knots that share a common filling with the figure--$8$ knot does not seem to increase with $n$. By contrast, the total number of $n$--crossing knots grows exponentially. Hence, we wonder:

\begin{question}
For $n \geq 4$, let $\mathcal K_n$ be the set of hyperbolic knots of $n$ crossings that share a common filling with the figure--$8$ knot. 
By \cite[Theorem 1.2]{KalfagianniMelby}, every $\mathcal K_n$ contains at least one double twist knot, so it is nonempty. 
Is the cardinality $| \mathcal K_n |$ universally bounded as $n$ grows? The same question also makes sense if the figure--$8$ knot is replaced by any fixed $K$.
\end{question}

\bibliographystyle{amsplain}
\bibliography{biblio}

\end{document}